\documentclass[a4paper,reqno]{amsart}
\usepackage{amsfonts}
\usepackage{amssymb}
\usepackage{times}
\usepackage{xfrac}
 \usepackage{cancel}

\usepackage{tasks}
\usepackage{cite}
\usepackage{fullpage}

\newcommand{\CC}{\mathbb{C}}
\newcommand{\KK}{\mathbb{K}}

\def\a{{\alpha}}

\def\b{{\beta}}

\def\d{{\delta}}

\def\g{{\gamma}}

\def\l{{\lambda}}

\def\gg{{\mathfrak g}}
\def\cal{\mathcal }

\makeatletter
\def\Ddots{\mathinner{\mkern1mu\raise\p@
\vbox{\kern7\p@\hbox{.}}\mkern2mu
\raise4\p@\hbox{.}\mkern2mu\raise7\p@\hbox{.}\mkern1mu}}
\makeatother

\theoremstyle{plain}%

\newtheorem{thm}{Theorem}[section]
 
 \newtheorem{lem}[thm]{Lemma}
  \newtheorem{defn}[thm]{Definition}
 \newtheorem{exa}[thm]{Example}
 \newtheorem{prop}[thm]{Proposition}
  
  \newtheorem{rem}[thm]{Remark}
  \newtheorem{conjecture}[thm]{Conjecture}
  
\newfont{\hueca}{msbm10}

\begin{document}

% MATH -------------------------------------------------------------------

\def\a{{\alpha}}

\def\b{{\beta}}

\def\d{{\delta}}

\def\g{{\gamma}}

\def\l{{\lambda}}

\def\gg{{\mathfrak g}}
\def\cal{\mathcal}

\title{ Solvable  compatible Lie   algebras with a given nilradical}

\author{A.  Fern\'andez Ouaridi}
\address{Amir Fern\'andez Ouaridi
\newline \indent Departament of Geometry and Topology, University of Sevilla, Sevilla (Spain)}
 \email{amir.fernandez.ouaridi@gmail.com}

\author{R.M. Navarro}
\address{Rosa Mar{\'\i}a Navarro \newline \indent
Dpto. de Matem{\'a}ticas, Universidad de Extremadura, C{\'a}ceres
 (Spain) }
\email{rnavarro@unex.es}
\thanks{The authors are partially supported by Uni\'on Europea,  Fondo Europeo de Desarrollo Regional and by Junta de Extremadura, la Autoridad de Gestión y el Ministerio de Hacienda, through project GR24068, and also  by Agencia Estatal de Investigaci\'on (Spain), grant PID2024-155502NB-I00 (European FEDER support included, EU)}

\author{B.A. Omirov}
\address{Bakhrom A. Omirov \newline \indent 
Institute for Advanced Study in Mathematics, Harbin, China, Institute of Technology, Harbin 150001 \newline \indent
Suzhou Research Institute, Harbin Institute of Technology, Harbin 215104, Suzhou, China}
\email{omirovb@mail.ru}

\author{G.O. Solijanova}
\address{Gulkhayo O. Solijanova \newline \indent
National University of Uzbekistan, 100174, Tashkent, Uzbekistan, \newline \indent
V.I. Romanovskiy Institute of Mathematics, Uzbekistan Academy of Sciences, Uzbekistan}
\email{gulhayo.solijonova@mail.ru}

\begin{abstract}
We extend the classical construction of solvable Lie algebras from a nilradical to compatible Lie algebras. Since the sum of nilpotent ideals may fail to be nilpotent, we replace the usual nilradical by a \emph{special nilradical} that behaves well with the mixed Jacobi identity. We use the maximal tori of diagonal derivations to build solvable extensions. The method is applied to the pairs $(\mathrm L_n,\mathrm R_n)$ and $(\mathrm L_n,\mathrm W_n)$, yielding explicit one-dimensional solvable extensions and proving nonexistence of higher-dimensional ones in these cases. 
We also study filiform compatible Lie algebras. We introduce the model family $\mathcal L_s$ and show that each $\mathcal L_s$ is a linear deformation of the model filiform Lie algebra $\mathcal L_k$. Finally, we study the existence of solvable extensions of this family, within the framework developed above.
\end{abstract}

\maketitle
{\bf 2020 MSC:} {\it  17A30; 17B30;  17B40; 17B56.}

{\bf Key-Words:} {\it  dialgebras, compatible Lie  algebras,  derivations, nilpotency, solvability.}

\section{Introduction}

The study of compatible algebraic structures, wherein a single vector space is endowed with two distinct product operations such that any of their linear combinations preserves the underlying algebraic structure, has become an area of significant interest in both mathematics and mathematical physics. Compatible Lie algebras, in particular, have been shown to have deep connections with various fields, including the classical Yang-Baxter equation, bi-Hamiltonian structures in integrable systems, and the theory of principal chiral models. This has motivated recent research into their fundamental properties, from their operadic characterization to their cohomology and deformation theories.

Recent efforts have focused on the algebraic classification of these algebras. A significant line of inquiry has been the classification of 
nilpotent compatible Lie algebras, which has been systematically addressed using generalizations of the Skjelbred-Sund method to provide a complete classification in low dimensions \cite{compatible Lie}. This approach, which relies on constructing nilpotent algebras as central extensions of smaller ones, has proven highly effective for understanding the nilpotent class.

A natural next step after nilpotent algebras is the study of solvable algebras. In the classical theory of Lie algebras, a fundamental method for constructing and classifying solvable Lie algebras is to first define their maximal nilpotent ideal—the nilradical—and then construct the solvable algebra as an extension of it. This approach, often known as Mubarakzyanov's method, depends on the existence of non-nilpotent derivations of the nilradical to build non-nilpotent solvable extensions. See for instance \cite{Casas1,Ancochea,Casas11,Casas13,Casas15, LAA2010,lisa,LMA, JAlgebra, 2}

Thus, the present manuscript aims to extend this construction methodology to  compatible Lie algebras. However, this transition is not straightforward, as foundational concepts require careful adaptation. For instance, the sum of two nilpotent ideals in a compatible Lie algebra is not necessarily nilpotent, which complicates the traditional definition of the nilradical. To address this, the key step is to replace the usual nilradical by the \emph{special nilradical}, which behaves well under the mixed Jacobi identity. We study the basic compatibility constraints for  semidirect products and use maximal tori of diagonal derivations to build solvable extensions in this new setting. Building upon this framework, this work explores the construction of solvable compatible Lie algebras whose nilradicals are well-known filiform Lie algebras, such as $\mathrm{L}_n$, $\mathrm{R}_n$ and $\mathrm{W}_n$, thereby providing a systematic method for generating new classes of solvable compatible Lie algebras. The method is worked out for the pairs $(\mathrm L_n,\mathrm R_n)$ and $(\mathrm L_n,\mathrm W_n)$ of compatible Lie algebras, yielding explicit one dimensional solvable extensions and nonexistence of higher–dimensional extensions. 
Additionally, we show a characterization of filiform compatible Lie algebras and propose their realization as linear deformation of model filiform ones. 
For filiform compatible Lie algebras we introduce the model family $\mathcal L_s$ and show that each $\mathcal L_s$ is a linear deformation of the model filiform Lie algebra $\mathcal L_k$. We conclude the paper studying the existence of solvable extensions of this family of model filiform compatible Lie algebras.

\medskip

Throughout the present paper we will consider vector spaces and algebras over the field complex numbers. We also asumme that the reader is familiarized with the basics of  Lie theory.

\section{Preliminaries on Compatible Lie Algebras}

In this section, we recall the definition of compatible Lie algebras and some of their basic properties. Throughout, let $\KK$ be a field of characteristic not equal to $2$. Unless otherwise stated, all vector spaces, Lie algebras, and linear maps are assumed to be defined over $\KK$. 
First, we shall recall the following well-known result.

\begin{prop} \label{compatible}
Let ${\gg}_1=(\gg, [-,-]_1)$ and ${\gg}_2=(\gg, [-,-]_2)$ be Lie algebras over the vector space $\gg$. Then the following conditions are equivalent:
\begin{enumerate}
    \item $(\gg, [-,-])$ is a Lie algebra, where $[x,y]=[x,y]_1+[x,y]_2$ for all $x, y \in \gg$.

    \item $(\gg, [-,-]_{\lambda_1, \lambda_2})$ is a Lie algebra for all $\lambda_1$, $\lambda_2 \in \KK$, where $[x,y]_{\lambda_1, \lambda_2}=\lambda_1 [x,y]_1+\lambda_2 [x,y]_2.$ 

    \item The following identity (called the mixed Jacobi identity) holds for all $x, y, z \in \gg$
    $$L(x,y,z):=[[x,y]_1,z]_2+[[y,z]_1,x]_2+[[z,x]_1,y]_2+[[x,y]_2,z]_1+[[y,z]_2,x]_1+[[z,x]_2,y]_1=0.$$
\end{enumerate}    
\end{prop}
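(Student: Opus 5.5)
The idea is to expand the Jacobiator of a linear combination of the two brackets. Each $[-,-]_{\lambda_1,\lambda_2}$ is bilinear and skew-symmetric, since each $[-,-]_i$ is. So in every case only the Jacobi identity needs checking. For a bilinear bracket $\mu$ write
$$J_\mu(x,y,z)=\mu(\mu(x,y),z)+\mu(\mu(y,z),x)+\mu(\mu(z,x),y).$$

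For $\mu=\lambda_1[-,-]_1+\lambda_2[-,-]_2$, bilinearity gives
$$J_\mu(x,y,z)=\lambda_1^2 J_1(x,y,z)+\lambda_2^2 J_2(x,y,z)+\lambda_1\lambda_2\,L(x,y,z),$$
where $J_i$ is the Jacobiator of $[-,-]_i$. To see this, expand each term such as $\mu(\mu(x,y),z)$ into four pieces. The pieces with the same bracket twice give the $\lambda_i^2$ terms. The mixed pieces $[[x,y]_1,z]_2$ and $[[x,y]_2,z]_1$, taken over the three cyclic permutations, give exactly the six terms of $L(x,y,z)$ with coefficient $\lambda_1\lambda_2$. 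Since $\gg_1$ and $\gg_2$ are Lie algebras, $J_1=J_2=0$. Hence
$$J_{\lambda_1,\lambda_2}=\lambda_1\lambda_2\,L.$$

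The equivalences follow from this identity.
\begin{itemize}
\item (3)$\Rightarrow$(2): if $L=0$, then every $J_{\lambda_1,\lambda_2}$ vanishes, so each bracket $[-,-]_{\lambda_1,\lambda_2}$ is a Lie bracket.
\item (2)$\Rightarrow$(1): take $\lambda_1=\lambda_2=1$.
\item (1)$\Rightarrow$(3): setting $\lambda_1=\lambda_2=1$ gives $0=J_{1,1}=L$.
\end{itemize}
None of these steps needs $\operatorname{char}\KK\neq 2$. The direction (1)$\Rightarrow$(3) uses only the single coefficient $\lambda_1\lambda_2=1$, with no division.

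The only step needing care is the bookkeeping in the expansion. One must check that the six mixed terms match $L(x,y,z)$ exactly, with no leftover sign coming from skew-symmetry. Expanding term by term as above confirms this directly, since the cyclic structure is preserved in each piece. I expect no real obstacle beyond this.
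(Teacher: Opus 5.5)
Your proof is correct: the cyclic expansion $J_{\lambda_1,\lambda_2}=\lambda_1^2J_1+\lambda_2^2J_2+\lambda_1\lambda_2 L$ matches the six terms of $L(x,y,z)$ exactly, so the three implications follow as you describe. The paper gives no proof to compare against, since it recalls this proposition as a well-known result, and your argument is the standard one behind it. Your remark that the equivalence needs no assumption on the characteristic is also right, because (1)$\Rightarrow$(3) only uses $\lambda_1=\lambda_2=1$.
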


Thus, we have the definition of a compatible Lie algebra.

\begin{defn} 
    A compatible Lie algebra is a dialgebra $(\gg, [-,-]_1, [-,-]_2)$, where the pairs ${\gg}_1=(\gg, [-,-]_1)$ and ${\gg}_2=(\gg, [-,-]_2)$ are Lie algebras satisfying any of the three equivalent conditions in Proposition~\ref{compatible}.
\end{defn}

Given a Lie algebra ${\gg}_0=(\gg,[-,-]_0)$, the classical example of a compatible Lie algebra arises from a linear deformation of ${\gg}_0$.  
Precisely, if $\varphi:\gg\times\gg\to\gg$ is a $2$-cocycle of ${\gg}_0$, then for each $t\in\KK$ the bracket  
$$
[x,y]_t=[x,y]_0+t\,\varphi(x,y), \qquad x,y\in\gg,
$$
defines a Lie algebra structure on $\gg$.  
In this situation, the triple $(\gg,[-,-]_0, \varphi)$ is a compatible Lie algebra if and only if $\varphi$ satisfies the Jacobi identity (this condition is usually denoted as $\varphi\circ \varphi = 0$) if and only if $[x,y]_t$ satisfies the Jacobi identity for some $t \in \KK$.  
This construction provides a large class of examples which we discuss later in this paper. 
It should be noted that any Lie algebra is in particular a compatible Lie algebra.

Recall that a {\em subalgebra} of a compatible Lie algebra $\gg$ is a vector subspace of $\gg$ which is closed for both multiplications. Likewise, an {\em ideal} $\mathfrak{i}$ of a compatible Lie algebra $\gg$ is a vector subspace such that $[\mathfrak{i}, \gg]_1$, $[\mathfrak{i}, \gg]_2 \subseteq \mathfrak{i}$. In particular, the {\em center} of a compatible Lie algebra $\gg$, denoted by $Z(\gg)$, is the following ideal
$$Z(\gg)=\{ x \in \gg  \ | \ [x,\gg]_1=[x,\gg]_2=0\}=Z(\gg_1) \cap Z(\gg_2).$$

The notions of nilpotency and solvability of a compatible Lie algebra are defined as follow \cite{compatible Lie}. Given subspaces $\mathfrak{s}$ and $\mathfrak{t}$ of a compatible Lie algebra $\gg$.  The commutator of $\mathfrak{s}$ and $\mathfrak{t}$ is the vector space $[\mathfrak{s}, \mathfrak{t}]:=[\mathfrak{s}, \mathfrak{t}]_1+ [\mathfrak{s}, \mathfrak{t}]_2$.
Following this notation, the  {\em descending central sequence} of a compatible Lie algebra $\gg$ is defined in the same way as for Lie algebras ${\cal C}^0(\gg): =\gg$, ${\cal C}^{k+1}(\gg):=[\gg,{\cal C}^k(\gg)]$  for all $k\geq 0$. Consequently, if ${\cal C}^k(\gg)=\{0\}$ for some $k$, then the compatible Lie algebra is called  {\em nilpotent}. By induction, it can be proven that ${\cal C}^i(\gg)$ is an ideal of $\gg$ and $[\cal C^i(\gg), \cal C^j(\gg)]\subseteq \cal C^{i+j}(\gg)$ for any $i, j \in \mathbb{N}$. Also, any nilpotent compatible Lie algebra has non-zero center and $\mathfrak g$ is nilpotent if and only if $\mathfrak g/Z(\mathfrak g)$ is nilpotent. 

Likewise, we can define the {\em derived sequence} of $\gg$ as ${\cal D}^0(\gg): =\gg, \ \ {\cal D}^{k+1}(\gg):=[{\cal D}^k(\gg),{\cal D}^k(\gg)] \ \ \mbox{ for all }k\geq 0.$  If this sequence is stabilized in zero, then the compatible algebra is said to be {\em solvable}. Let us note that, as usual, all nilpotent compatible Lie algebras are solvable compatible Lie algebras. Also, it is clear that if $\gg$ is a solvable (resp. nilpotent) compatible Lie algebra, then both $\gg_1$ and $\gg_2$ are solvable (resp. nilpotent) Lie algebras, but the converse does not hold. Therefore, 
Engel's and Lie's Theorem remain valid for $\gg_1$ and $\gg_2$. 

The notion of derivation of a compatible Lie algebra is defined in \cite{compatible Lie} as a pair 
$(d_1,d_2)$ where $d_1$ is a derivation of $\gg_1$, $d_2$ is a derivation of $\gg_2$ and $d_1+d_2$ is a derivation of $(\gg, [[-.-]]=[-,-]_1 + [-,-]_2)$. However, we shall use the definition of derivation of a compatible Lie algebra $\gg$ as a derivation of a dialgebra $\gg$, that is, $d$ is called {\it a derivation of compatible Lie algebra $\gg$} if $d$ is a derivation of both Lie algebras $\gg_1$ and $\gg_2$.  

\subsection{Examples of nilpotent compatible Lie algebras}
We refer to the paper \cite{compatible Lie} for several examples of nilpotent compatible Lie algebras of small dimension. Beyond these, an important example follows from the theory of filiform Lie algebras. Let $\mathrm{L}_n$ be the Lie algebra with basis $\left\{e_0, e_1, \ldots, e_n\right\}$ defined by the products
$[e_0, e_i] = e_{i+1}$, where $1 \leq i \leq n-1$ (the other brackets being zero). In the literature, this algebra is usually refered to as the {\it model filiform Lie algebra}.
It is known that any filiform Lie algebra is isomorphic
to a linear deformation
of $\mathrm{L}_n$ \cite{Khakim}.

Indeed, let $\Delta$ be the set of pairs $(k, r) \in \mathbb{N}^2$ such that $1 \leq k \leq n - 1$, $2k + 1 < r \leq n$. If $n$ is odd, we suppose that $\Delta$ also contains  the pair $(\frac{n - 1}{2}, n)$. For any pair $(k, r) \in \Delta$, we can associate the 2-cocycle for the Chevalley-Eilenberg cohomology of $\mathrm{L}_n$ with coefficients in the adjoint module denoted $\Psi_{k, r}$ and defined by
$$\Psi_{k, r}(e_i, e_j) = -\Psi_{k, r}(e_j, e_i) = (-1)^{k-i}\binom{k-i}{j-k-1}
e_{i+j+r-2k-1},$$
when $1 \leq i \leq k < j \leq n$ and $\Psi_{k, r}(e_i, e_j) = 0$ otherwise. 
These formulas for $\Psi_{k, r}$ result from the conditions
$$\Psi_{k, r}(e_k, e_{k+1}) =e_r, \quad \Psi_{k, r} \in Z^2(\mathrm{L}_n, \mathrm{L}_n).$$

Let $\mathcal{F}_{n+1}$ be the variety of filiform Lie algebras of dimension $(n+1)$. Any filiform Lie algebra 
$\mu \in\mathcal{F}_{n+1}$ is isomorphic to $\mu_0 + \Psi$, where $\mu_0$ is the bracket of $\mathrm{L}_n$ and $\Psi$ is a $2$-cocycle given by
$\Psi = \sum_{(k, r) \in \Delta} a_{k, r}\Psi_{k, r},$
and verifying  
$\Psi \circ \Psi = 0$. In conclusion, any filiform Lie algebra gives rise to a  compatible Lie algebra $(\mathrm{L}_n, \mu_0, \Psi)$. Further examples are discussed in the upcoming sections.

\subsection{The nilradical of a compatible Lie algebra}

It is clear that the sum of ideals of a compatible Lie algebra is again an ideal. However, the sum of two nilpotent ideals of a compatible Lie algebra is not necessarily nilpotent. 

\begin{exa} Let $\cal L$ be $7$-dimensional compatible Lie algebra with basis $\left\{e_1, \ldots, e_7\right\}$ defined by
$$\begin{array}{cccccc}
\cal L:\left\{\begin{array}{lllll}\!\!
[e_1,e_2]=e_4,
[e_1,e_3]=e_5,
[e_2,e_3]=e_6,
[e_4,e_3]=e_7,
[e_1,e_6]=e_7,\\[2mm]\!\!
\{e_6,e_1\}=e_2,
\{e_1,e_7\}=e_4,
\{e_3,e_4\}=e_2,
\{e_7,e_3\}=e_6.
\end{array}\right.
\end{array}$$

The compatible Lie algebra $\cal L$ is non-nilpotent. Indeed, for the descending central series we have 
$$C^1(\cal L)=\textrm{span}\{e_2,e_4, e_5,e_6,e_7\}, 
\quad C^i(\cal L)=\textrm{span}\{e_2,e_4,e_6,e_7\}, \quad i\geq 2.$$
 
It can be verified that $\cal I_1=\textrm{span}\{e_1,e_2,e_4,e_5,e_6,e_7\}$ and $\cal I_2=\textrm{span}\{e_2,e_3,e_4,e_5,e_6,e_7\}$ are nilpotent ideals of the compatible Lie algebra $\cal L$, by a direct computation. However, $\cal L=\cal I_1+\cal I_2$ is not nilpotent neither. Moreover, $[[\cal I_2,\cal I_2]]$ is not an ideal of $\cal L$ because of $\{[[\cal I_2,\cal I_2]],\cal L\}=\textrm{span}\{e_2,e_4,e_6\}\not \subseteq[[\cal I_2,\cal I_2]].$ 
Thus, we can conclude that the sum of nilpotent ideals of a compatible Lie algebra might not be nilpotent and that the product of ideals might not be an ideal. 
\end{exa}

This example shows that the usual notion of nilradical is not well-defined for compatible Lie algebras. Therefore, we introduce an alternative definition. We begin with the notion of a special ideal, defined as follows.

\begin{defn} An ideal $\cal I$ of a compatible Lie algebra $\cal L$ is called a special ideal of $\cal L$, if for any ideal $\cal J$ of $\cal L$ the space $[[\cal I,\cal J]]$ is an ideal of $\cal L$.
\end{defn}

One can check that the sum of special ideals of a compatible Lie algebra is a special ideal. Similarly, their product is again special ideal. Moreover, for an arbitrary special ideal $\cal I$ of a compatible Lie algebra $\cal L$, we obtain that $C^i(\cal I)$ is an ideal of $\cal L$ for any $i\in \mathbb{N}.$ 
A special ideal  $\cal I$ is called a special nilpotent ideal of $\cal L$, if there exists $s\in\mathbb{N}$, such that $C^s(\cal I)=0$. Observe that since a compatible Lie algebra $\cal L$ is a special ideal of itself, then special nilpotency means just its nilpotency.

\begin{lem} \label{lem3.6} Sum of special nilpotent ideals of a compatible Lie algebra $\cal L$ is also a special nilpotent ideal of $\cal L$.
\end{lem}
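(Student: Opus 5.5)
By induction it suffices to treat two special nilpotent ideals $\cal I$ and $\cal J$ of $\cal L$, with $C^p(\cal I)=0$ and $C^q(\cal J)=0$. We already know from the remark preceding the lemma that $\cal I+\cal J$ is a special ideal, so only nilpotency of $\cal I+\cal J$ needs proof.

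The plan mirrors the classical Lie-algebra proof. We show that $C^{m}(\cal I+\cal J)$ is contained in a sum of iterated products, each of which involves $\cal I$ at least $p$ times or $\cal J$ at least $q$ times. Throughout, $[[\cal A,\cal B]]$ denotes $[\cal A,\cal B]_1+[\cal A,\cal B]_2$. For ideals $\cal A_1,\dots,\cal A_m\in\{\cal I,\cal J\}$, set
$$P(\cal A_1,\dots,\cal A_m)=[[\cal A_1,[[\cal A_2,\dots,[[\cal A_{m-1},\cal A_m]]\dots]]]].$$
Since both brackets are bilinear and $[[-,-]]$ is additive in each argument, an induction on $m$ gives
$$C^{m-1}(\cal I+\cal J)\subseteq\sum P(\cal A_1,\dots,\cal A_m),$$
where the sum runs over all choices $\cal A_i\in\{\cal I,\cal J\}$. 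Take $m\ge p+q-1$. Then each word contains at least $p$ copies of $\cal I$ or at least $q$ copies of $\cal J$; say $\cal I$ appears $p$ times.

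The key step is a \emph{mixed-absorption} claim. If $\cal K$ is an ideal of $\cal L$ with $\cal K\subseteq C^{s}(\cal I)$, then $[[\cal J,\cal K]]\subseteq C^{s}(\cal I)$ and $[[\cal I,\cal K]]\subseteq C^{s+1}(\cal I)$. Both inclusions are immediate: $C^s(\cal I)$ is an ideal of $\cal L$ by the remark before the lemma, so bracketing with $\cal J\subseteq\cal L$ stays inside it, and the second inclusion is the definition $C^{s+1}(\cal I)=[[\cal I,C^s(\cal I)]]$.

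The real obstacle is that the inner products $P(\cal A_k,\dots,\cal A_m)$ must be ideals of $\cal L$ (or at least contained in suitable ideals), so that the induction can pass from the innermost bracket outward. Specialness supplies exactly this. For any ideal $\cal K$, both $[[\cal I,\cal K]]$ and $[[\cal J,\cal K]]$ are ideals, because $\cal I$ and $\cal J$ are special. Hence, by induction from the right, every $P(\cal A_k,\dots,\cal A_m)$ is an ideal of $\cal L$. This is precisely where the example above fails, and it is the step I would check most carefully.

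With this in hand, count the occurrences of $\cal I$ from the inside out. By the absorption claim and induction, $P(\cal A_k,\dots,\cal A_m)\subseteq C^{t-1}(\cal I)$, where $t$ is the number of $\cal I$'s among $\cal A_k,\dots,\cal A_m$, with the convention $C^{-1}=C^0$ when $t=0$ or $t=1$. The base case is handled by noting that if the innermost $\cal A_m$ is $\cal J$, the first $\cal I$ appearing gives $[[\cal I,\cal K]]\subseteq\cal I=C^0(\cal I)$. Consequently a word containing $p+1$ copies of $\cal I$ lies in $C^p(\cal I)=0$, and symmetrically for $\cal J$. Choosing $m\ge p+q+1$ therefore forces every word to vanish, so $C^{p+q}(\cal I+\cal J)=0$. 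Hence $\cal I+\cal J$ is a special nilpotent ideal.
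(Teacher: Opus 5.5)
Your proposal is correct and is essentially the paper's proof: expand $C^{p+q}(\mathcal I+\mathcal J)$ into iterated products of $p+q+1$ factors, each containing at least $p+1$ copies of $\mathcal I$ or $q+1$ copies of $\mathcal J$, and absorb the remaining factors using that every $C^k(\mathcal I)$ is an ideal of $\mathcal L$; your inside-out count just makes the paper's one-line claim explicit. Two small cleanups are needed: first, assert $P(\mathcal A_k,\dots,\mathcal A_m)\subseteq C^{t-1}(\mathcal I)$ only for $t\ge 1$, since for $t=0$ it is false (e.g.\ $P(\mathcal J)=\mathcal J$); second, the fact that the inner products are themselves ideals is never actually needed, because both absorption inclusions only use that $C^s(\mathcal I)$ is an ideal of $\mathcal L$.
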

\begin{proof} Let  $\cal I_1$ and $\cal I_2$ be special nilpotent ideals of a compatible Lie algebra $\cal L$. Then  $\cal I_1+\cal I_2,$ is a special ideal of $\cal L.$
Then there exists $s,t\in \mathbb{N}$, such that ${\cal C}^s(\cal I_1)=\{ 0 \}={\cal C}^t(\cal I_2)$. It is clear that ${\cal C}^{s+t}(\cal I_1+\cal I_2)$ is contained in a sum of terms $[[ [[\dots[[ [[\cal I_{\alpha_1},\cal I_{\alpha_2}]],\cal I_{\alpha_3}]],\dots]],\cal I_{\alpha_{s+t+1}}]],$ where $\alpha_i=\{1,2\}$. Taking into account that ${\cal C}^k(\cal I_i)$ is an ideal of $\cal L$ for any $k$, we derive that any such term is contained either in ${\cal C}^s(\cal I_1)$ or in ${\cal C}^t(\cal I_2)$. Therefore, we have ${\cal C}^{s+t}(\cal I_1+\cal I_2)=\{ 0 \},$ i.e., $\cal I_1+\cal I_2$ is a special nilpotent ideal of $\cal L$.
\end{proof}

By Lemma~\ref{lem3.6}, we obtain the existence of the maximal special nilpotent ideal called {\it special nilradical} or just nilradical. Thus, we can introduce the notion of {\em (special) solvable extension} of a nilpotent compatible Lie algebra $\gg$ as a solvable compatible Lie algebra $\gg'$ such that its special nilradical is $\gg$. Note that in the particular case of Lie algebras, these two notions coincide with the usual notions.

\section{Solvable extensions of compatible Lie algebras.}
\label{sec3}

The aim of this section is to explore the possibility of extending the Mubarakzyanov method to construct solvable extensions of nilpotent Lie algebras to the broader context of compatible Lie algebras, or alternatively to develop a method for constructing solvable extensions of a compatible Lie algebra (in some sense). However, this variety presents various difficulties which we illustrate in the following remarks.

\begin{rem}
    Consider the $n$-dimensional model filiform Lie algebra $\cal N_1$, given for a basis $\left\{e_1, \ldots, e_n\right\}$ by the products $\cal N_1 : [e_i,e_1]=e_{i+1}$ with $2\leq i \leq n-1.$
It is known that $\varphi_1, \varphi_2: \cal N_1 \times \cal N_1 \to \cal N_1$ given by 
$$\varphi_1(e_i,e_1)=e_{i+1} \textrm{ where }  2\leq i \leq n-1 \textrm{ and } \quad \varphi_2(e_1,e_2)=e_2$$
are $2$-cocycles of $\cal N_1$. Using these cocycles, we can construct  various types of compatible Lie algebras (nilpotent, solvable) with one part being $\cal N_1$. In particular, setting $ n=3$, the following $2$-cocycle 
 $$\varphi(e_2,e_1)=2e_1, \quad \varphi(e_2,e_3)=-2e_3, \quad \varphi(e_1,e_3)=e_2$$
gives us a {perfect compatible Lie algebra $(\cal N, [-,-],\varphi)$.} 
In particular for $n=4$, $\cal N_1$ is a $2$-cocycle of $sl_2 \oplus \mathbb{C}$. 
\end{rem}

By the theory of solvable Lie algebras, it is known that any nilpotent (non-characteristically nilpotent) Lie algebra $\cal N$ admits a solvable extension, that is, there exists a solvable (non-nilpotent) Lie algebra whose nilradical is precisely $\cal N$. In other words, the existence of this extensions depend on the existence of non-nilpotent derivations of a nilpotent Lie algebra \cite{Mubarakzyanov}. Since a compatible Lie algebra is a dialgebra, the derivation of a compatible Lie algebra is a derivation of both brackets. The following example shows that, even when all the derivations of a nilpotent compatible Lie algebra are nilpotent, a solvable extension may exist.

\begin{rem}\label{existcc} Let $\cal N$ be a nilpotent compatible Lie algebra with the following two component Lie algebras: 
$$\cal N_1:\left\{\begin{array}{llll}
[e_i,e_1]=e_{i+1},& 2\le i\le n-1,\\[3mm]
[e_i,e_2]=e_{i+2},& 3\le i\le n-2,
\end{array}\right. \quad \mbox{and} \quad \cal N_2:\ \left\{\begin{array}{llll}
\{e_2,e_1\}=e_{n-1},\\[3mm]
\{e_3,e_1\}=-e_{n}.
\end{array}\right.$$

In order to simplify the computational presentation, we assume $n=7$. It can be checked that any derivation of  $\textrm{Der}(\cal N)$ is nilpotent. However, we have a one-dimensional solvable extension: 
$$\mathcal{R}:\left\{\begin{array}{llllllllllllll}
 [e_i,e_1]=e_{i+1}, &2\le i\le 6, & \{e_2,e_1\}=e_6, &  \{e_1,e_3\}=e_7, &\\[3mm]
[e_i,e_2]=e_{i+2},&  3\le i\le 5,& \{e_1,x\}=a_1 e_6+a_2e_7,& \{e_2,x\}=a_3 e_6+a_4 e_7,&\\[3mm]
[e_i,x]=ie_i, &  1\le i\le 7,& \{e_3,x\}=-3e_6+a_3 e_7.&
\end{array}\right.$$
Indeed, it can be checked that $\mathcal{R}$  is a solvable compatible Lie algebra with special nilradical $\mathcal{N}$. 
\end{rem}

\medskip

Therefore, solvable extensions may exist without semisimple derivations. We next derive necessary conditions for a solvable extension $\mathcal N\rtimes \textrm{span}(z_1, \ldots, z_r)$ of a nilpotent compatible Lie algebra $\mathcal N$ with components $(\mathcal N_1,[\,,\,])$ and $(\mathcal N_2,\{\,,\,\})$.  In particular, such an extension must satisfy the following three conditions.

\begin{prop}
Let $(\cal N,[\, , \,],\{\, , \,\})$ be a compatible Lie algebra.
Then $\cal N\rtimes \textrm{span}(z_1, \ldots, z_r)$ is a compatible Lie algebra 
if and only if the following conditions hold for all $x, y \in \mathcal{N}$ and all $z\in \textrm{span}(z_1, \ldots, z_r)$.
\begin{enumerate}
\item $ad^{(1)}_z\in \textrm{Der}(\cal N_1)$.
\item $ad^{(2)}_z\in \textrm{Der}(\cal N_2)$.
\item For all $x,y\in\cal N$,
$$
ad^{(1)}_z\{x,y\}-\{ad^{(1)}_z x,\,y\}
-\{x,\,ad^{(1)}_z y\} +ad^{(2)}_z[ x,y ]
-[\,ad^{(2)}_z x,\,y]
-[\,x,\,ad^{(2)}_z y]
=0.
$$
\end{enumerate}
\end{prop}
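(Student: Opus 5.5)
We read the semidirect product $\mathcal N\rtimes\textrm{span}(z_1,\ldots,z_r)$ in the intended way: the underlying space is $\mathcal N\oplus Q$ with $Q=\textrm{span}(z_1,\ldots,z_r)$. Both brackets restrict on $\mathcal N$ to $[\,,\,]$ and $\{\,,\,\}$. For $z\in Q$ and $x\in\mathcal N$ we have $[x,z]=ad^{(1)}_z x\in\mathcal N$ and $\{x,z\}=ad^{(2)}_z x\in\mathcal N$. The brackets among the $z_i$ are taken to vanish (or at least to land in $\mathcal N$; we assume they vanish, which matches the examples such as Remark~\ref{existcc}). The plan is to use Proposition~\ref{compatible}(3): the extension is a compatible Lie algebra exactly when each of its two brackets satisfies the Jacobi identity and the mixed Jacobi identity $L(x,y,z)=0$ holds on all triples. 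Both sides are trilinear and alternating, so it is enough to test on triples of basis-type elements and sort them by how many entries lie in $Q$.

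\emph{Triples with no entry in $Q$.} Here the identities are those of $\mathcal N$ and hold by hypothesis.

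\emph{Triples $(x,y,z)$ with $x,y\in\mathcal N$ and $z\in Q$.}
\begin{itemize}
\item For the first bracket, the Jacobi identity $[[x,y],z]+[[y,z],x]+[[z,x],y]=0$ can be rewritten with $[w,z]=ad^{(1)}_z w$. It becomes $ad^{(1)}_z[x,y]=[ad^{(1)}_z x,y]+[x,ad^{(1)}_z y]$, which is condition (1).
\item The same computation for $\{\,,\,\}$ gives condition (2).
\item For the mixed identity, expand the six terms of $L(x,y,z)$. The terms $[[x,y]_1,z]_2=ad^{(2)}_z[x,y]$ and $[[x,y]_2,z]_1=ad^{(1)}_z\{x,y\}$ appear directly. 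The remaining four terms are $\{[y,z],x\}=-\{ad^{(1)}_z y,x\}=\{x,ad^{(1)}_z y\}$ and its analogues. Tracking the signs coming from antisymmetry, $L(x,y,z)$ becomes, up to an overall sign, exactly the expression in condition (3).
\end{itemize}
We must fix the sign convention for $ad_z$ (right versus left multiplication) once and use it consistently. Under either convention the three conditions come out the same up to an overall sign, since all of them are linear in $ad_z$.

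\emph{Triples with two or three entries in $Q$.} Take $x\in\mathcal N$ and $z,z'\in Q$, so that $[z,z']=0$. Jacobi for each bracket reduces to $[ad^{(i)}_z,ad^{(i)}_{z'}]=0$. The mixed identity reduces to $[ad^{(1)}_z,ad^{(2)}_{z'}]+[ad^{(2)}_z,ad^{(1)}_{z'}]=0$ on $\mathcal N$. Triples lying entirely in $Q$ hold trivially.

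The main obstacle is this last family of triples. The statement lists only (1)--(3), so the ``if'' direction needs these commutation relations as well. I would handle this by restricting to the setting the paper is heading towards: the $ad_z$ lie in a torus of diagonal derivations, so they commute automatically. Alternatively, I would note that for $r=1$ these identities are vacuous by antisymmetry, and say so explicitly. The sign bookkeeping in the mixed expansion is routine but must be checked carefully.
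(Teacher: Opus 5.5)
Your verification is correct, and it is the direct check the paper has in mind when it calls the proof straightforward. With the convention $[x,z]=ad^{(1)}_z x$ and $\{x,z\}=ad^{(2)}_z x$, triples with exactly one entry from $Q=\mathrm{span}(z_1,\dots,z_r)$ give (1) and (2). The six terms of $L(x,y,z)$ coincide term by term with the left-hand side of (3).

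The obstacle you flag for triples $(x,z,z')$ is not a gap in your argument but a real defect of the statement. For $r\ge 2$ the ``if'' direction fails, unless the symbol $\rtimes$ is understood to carry extra relations. Here is an example:
\begin{itemize}
\item Let $\mathcal N$ have dimension $2$ with both brackets zero.
\item Put $ad^{(2)}_{z_1}=ad^{(2)}_{z_2}=0$, and let $ad^{(1)}_{z_1},ad^{(1)}_{z_2}$ be two non-commuting endomorphisms of $\mathcal N$.
\item Then (1)--(3) hold trivially.
\item But the Jacobi identity of the first bracket on $(x,z_1,z_2)$ reads $[ad^{(1)}_{z_2},ad^{(1)}_{z_1}]x=0$, which fails.
\item No choice of $[z_1,z_2]\in\mathcal N$ repairs this, since $\mathcal N$ is abelian.
\end{itemize}

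So do not try to ``handle'' this inside a proof of the stated equivalence. State the correct conclusion explicitly:
\begin{itemize}
\item The ``only if'' direction holds for every $r$.
\item The ``if'' direction holds for $r=1$, since by alternation every relevant triple has at most one entry from $Q$.
\item For $r\ge 2$ with $[Q,Q]=0$, the extension is a compatible Lie algebra if and only if (1)--(3) hold together with $[ad^{(i)}_z,ad^{(i)}_{z'}]=0$ for $i=1,2$ and $[ad^{(1)}_z,ad^{(2)}_{z'}]+[ad^{(2)}_z,ad^{(1)}_{z'}]=0$.
\end{itemize}
These extra relations are automatic when all the $ad^{(i)}_z$ lie in a common abelian subalgebra of $\mathfrak{gl}(\mathcal N)$, such as the torus of diagonal derivations used in the paper's final construction. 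The paper's two-dimensional computations also implicitly impose them when they apply the Jacobi identity to the pair $x,y$. With this correction made explicit, your case analysis is a complete proof.
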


The proof of the proposition is straightforward. Moreover, if $ad^{(1)}_z,\,ad^{(2)}_z\in \textrm{Der}(\cal N_1)\cap \textrm{Der}(\cal N_2)$, then the three conditions above hold. This is not the only possibility, as shown by the previous remark. However, these conditions alone do not ensure that $\cal N\rtimes \textrm{span}(z_1, \ldots, z_r)$ is a solvable extension with special nilradical $\cal N$. To identify the extra constraints, we study particular families of examples in the upcoming subsections.

\medskip

To set notation for the constructions below, we recall the next distinguished families of filiform algebras \cite{libroKluwer}:

\begin{enumerate}
    \item Let \( \mathrm{L}_n \) be the \(n\)-dimensional Lie algebra defined by the non-zero products
    $$[e_1,e_i] = e_{i+1}, \ 2\leq i \leq n-1,$$ 
    where $\{e_1, \dots, e_n\}$ is a basis of $\mathrm{L}_n$. This algebra is the model filiform Lie algebra.
    
    \item Let \( \mathrm R_n \) be defined in the basis $\{e_1, \dots, e_n\}$ by
    \[
    [e_1,e_i] = e_{i+1}, \quad 2\leq i \leq n-1, \quad [e_2,e_i] = e_{i+2}, \quad 3 \leq i \leq n-2.
    \]

\item Let \(\mathrm W_n \) be the Lie algebra whose brackets, in the basis $\{e_1, \dots, e_n\}$, are:

\begin{equation}\label{Wn}
\left\{\begin{array}{lllll}
[e_1, e_i] = e_{i+1}, & 2\le i\le n-1,\\[3mm]
[e_i, e_j] = \frac{6(j-i)(i-2)!(j-2)!}{(i+j-2)!}e_{i+j}, & 2 \leq i,j \leq n-2, \quad i+j \leq n.\\[3mm]
\end{array}\right.
\end{equation}

 It is isomorphic to the Lie algebra defined by:
\[[e_i, e_j] = (j-i) e_{i+j}, \quad 3 \leq i+j \leq n,\]
which is a finite quotient of the nilpotent part of the Witt algebra.
\end{enumerate}

\begin{rem}
    It is known that there are only two, up to isomorphism, $n$-dimensional Lie algebras, $n \geq 12$, whose brackets satisfy the relations:
\[
\begin{array}{llll}
[e_1,e_i]=e_{i+1}, & 2 \leq i \leq n-1,&\\[3mm]
[e_i, e_j] = a_{i,j} e_{i+j} & 2 \leq i < j \leq n-1, & i + j \leq n.
\end{array}\]
These Lie algebras are $\mathrm R_n$ and $\mathrm W_n$.

\end{rem}

\begin{prop}
    Any dialgebra $(\cal N,[\, , \,],\{\, , \,\})$ where $(\cal N,[\, , \,])$  is $\mathrm L_n$ and $(\cal N, \{\, , \,\})$ is an algebra from $\{\mathrm R_n, \mathrm W_n\}$ is a compatible nilpotent Lie algebra on the common basis $\{e_1,\dots,e_n\}$.
\end{prop}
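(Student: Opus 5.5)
The plan is to treat both brackets as deformations of the same model bracket and use a rescaling argument to get the mixed Jacobi identity essentially for free. Denote by $\mu_0$ the bracket of $\mathrm L_n$, i.e.\ $\mu_0(e_1,e_i)=e_{i+1}$ for $2\le i\le n-1$. Both $\mathrm R_n$ and $\mathrm W_n$ have brackets of the form $\mu=\mu_0+\psi$. Here $\psi$ is supported on pairs $(e_i,e_j)$ with $2\le i,j$ and $i+j\le n$, and
$$\psi(e_i,e_j)=a_{i,j}e_{i+j}.$$
For $\mathrm R_n$ we have $a_{2,i}=1$ and $a_{i,2}=-1$ for $3\le i\le n-2$, and all other $a_{i,j}=0$. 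For $\mathrm W_n$ the $a_{i,j}$ are the coefficients in (\ref{Wn}).

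By Proposition~\ref{compatible} it suffices to show that $\mu_0+\mu=2\mu_0+\psi$ is a Lie bracket. Since scaling a Lie bracket by $\tfrac12$ keeps it a Lie bracket, it is equivalent to show that $\mu_0+\tfrac12\psi$ is a Lie bracket.

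\textbf{Key step: a diagonal change of basis.} Fix $c\in\KK^*$ and consider the linear map $f$ with
$$f(e_1)=e_1,\qquad f(e_i)=c\,e_i\quad (i\ge 2).$$
Transport $\mu$ along $f$, i.e.\ set $\mu^f(x,y)=f^{-1}\mu(f x,f y)$.
\begin{itemize}
\item The $\mu_0$ part is unchanged: $f^{-1}\mu_0(e_1,c e_i)=f^{-1}(c\,e_{i+1})=e_{i+1}$, because $i+1\ge 2$.
\item The $\psi$ part is multiplied by $c$: $f^{-1}\psi(ce_i,ce_j)=c^2a_{i,j}\,c^{-1}e_{i+j}=c\,a_{i,j}e_{i+j}$, because $i+j\ge 2$.
\end{itemize}
Hence $\mu^f=\mu_0+c\psi$. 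As $\mu^f$ is isomorphic to the Lie algebra $\mu$, the bracket $\mu_0+c\psi$ is a Lie bracket for every $c\ne0$, and in particular for $c=\tfrac12$.

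The same argument can be phrased polynomially, which also yields the mixed identity directly. The Jacobiator of $\mu_0+c\psi$ equals $c\,M(\mu_0,\psi)+c^2J(\psi)$, since $J(\mu_0)=0$. This vanishes for all $c\neq0$, so over the infinite field $\CC$ both $M$ and $J(\psi)$ vanish. The mixed Jacobi expression for the pair $(\mu_0,\mu_0+\psi)$ equals $2J(\mu_0)+M=0$. This step is the only nontrivial one, and the rescaling trick removes any need to compute with the explicit coefficients of $\mathrm W_n$.

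\textbf{Nilpotency.} Assign degrees $\deg e_1=1$ and $\deg e_i=i-1$ for $i\ge2$, and let $F_k$ be the span of the basis vectors of degree $\ge k$.
\begin{itemize}
\item The bracket $\mu_0$ is degree-additive: $\deg e_{i+1}=i=1+(i-1)$.
\item The bracket $\psi$ raises degree strictly: $\deg e_{i+j}=i+j-1>(i-1)+(j-1)$.
\end{itemize}
Thus both brackets map $F_a\times F_b$ into $F_{a+b}$. By induction, $\mathcal C^k(\cal N)=[\cal N,\mathcal C^{k-1}(\cal N)]_1+[\cal N,\mathcal C^{k-1}(\cal N)]_2\subseteq F_{k+1}$. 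Since $F_n=0$, the compatible algebra is nilpotent.

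The main point needing care is the bookkeeping that $f^{-1}$ acts as $c^{-1}$ on every output index appearing (all are $\ge2$). This holds because no bracket of either algebra has $e_1$ in its image.
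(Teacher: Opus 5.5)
Your proof is correct and follows the paper's route: split $\{\,,\,\}=\mu_0+\psi$ and reduce compatibility to $\psi$ being a $2$-cocycle of $\mathrm L_n$ with $\psi\circ\psi=0$, so that every $\lambda_1\mu_0+\lambda_2\psi$ is a Lie bracket. The paper simply asserts this from the claim that $\mathrm R_n$ and $\mathrm W_n$ are linear deformations of $\mathrm L_n$, even though a single Lie bracket $\mu_0+\psi$ only forces the linear and quadratic terms in $\psi$ to cancel jointly; your rescaling $e_i\mapsto c\,e_i$ $(i\ge 2)$ is what actually justifies that claim, just as your degree filtration supplies the nilpotency argument the paper calls clear.
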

\begin{proof}
Write $\mathrm R_n=\mathrm L_n+\Psi_R$ with $\Psi_R(e_2,e_i)=e_{i+2}$ ($3\le i\le n-2$), and $\mathrm W_n=\mathrm L_n+\Psi_W$ with $\Psi_W$ given by \eqref{Wn} on $i,j\ge 2$. Both are linear deformations of $\mathrm L_n$, hence
$d\Psi_R=d\Psi_W=0$ and $\Psi_R\circ\Psi_R=\Psi_W\circ\Psi_W=0$.
Therefore for all $\lambda_1, \lambda_2 \in \mathbb{C}$ the brackets $\mu_{\lambda_1,\lambda_2}=\lambda_1  \mathrm L_n+\lambda_2\Psi_R$ and $\mu'_{\lambda_1,\lambda_2}= \lambda_1 \mathrm L_n+\lambda_2\Psi_W$ define Lie algebras. Hence, the pairs $(\mathrm L_n,\mathrm R_n)$ or $(\mathrm L_n,\mathrm W_n)$ correspond to compatible algebras. Moreover, the nilpotency of these compatible algebras is clear.
\end{proof}

\begin{rem}
    The pair $(\mathrm R_n,\mathrm W_n)$ does not correspond to a compatible algebra because $\Psi_R\circ\Psi_W+\Psi_W\circ\Psi_R\neq 0$. 
\end{rem}

In the following we consider a nilpotent compatible Lie algebra 
with components from the set $\{\mathrm L_n, \mathrm R_n, \mathrm W_n\}$ and study its solvable extensions by means of non-nilpotent derivations. By choosing non-nilpotent derivations to construct the extension, it is clear that the nilradical will be $\mathcal{N}$, so the construction will be a solvable extension.

\subsection{Solvable extensions of the nilpotent compatible Lie algebra with Lie parts $\mathrm{L}_n$ and $\mathrm R_n$.}

First we consider the case of solvable compatible Lie extensions of a nilpotent compatible Lie algebra with component Lie algebras $\mathrm L_n$ and $\mathrm R_n$. Throughout this section, we denote by $\mathcal{N}$ the corresponding compatible Lie algebra. The derivations of the algebra $\mathcal{N}$ are presented in the following remark. 

\begin{rem}
    Let $\mathcal{N}$ be the compatible Lie algebra with basis $\left\{e_1, \ldots, e_n\right\}$ corresponding to the pair $(\mathrm L_n,\mathrm R_n)$. Then any derivation $d\in \textrm{Der}(\mathcal{N}) = \textrm{Der}(\mathrm{L}_n)\cap \textrm{Der}(\mathrm{R}_n)$ has the form 
    $$\left\{\begin{array}{lllll}
d(e_1)=\alpha_{1,1}e_1+\alpha_{1,n-1}e_{n-1}+\alpha_{1,n}e_n,\\[3mm]
d(e_i)=i\alpha_{1,1}e_i+\sum\limits_{t=3}^{n-i+2}\alpha_{2,t}e_{t+i-2},& 2\le i\le n.
\end{array}\right.$$
    for certain parameters $\alpha_{1,1},\alpha_{1,n-1},\alpha_{1,n},\alpha_{2,3},\ldots,\alpha_{2,n}\in\mathbb{C}$. The computation is straightforward. 
\end{rem}

Notice that $\cal N$ admits semisimple derivations. Hence we study solvable extensions of $\cal N$ obtained from non-nilpotent derivations, considering separately the cases in which the derivation lies in $\mathrm L_n$, in $\mathrm R_n$, or in $\cal N$ itself. We do not claim that these cases exhaust all solvable extensions of $\cal N$: there might be solvable extensions arising from nilpotent derivations on both algebras, see Remark~\ref{existcc}; however, several nontrivial examples can be constructed following this approach.

\subsubsection{Solvable extensions of $\cal N$ via non-nilpotent derivations of $\mathrm L_n$.}
We recall the algebra of derivations of the Lie algebras $\mathrm L_n$ and $\mathrm R_n$ in the following lemmas. 

\begin{rem}\label{rmderL}
For the model filiform Lie algebra $\mathrm L_n$ on the basis $\{e_1,\dots,e_n\}$, every derivation $d\in \textrm{Der}(\mathrm L_n)$ is 
\[
\left\{
\begin{array}{ll}
d(e_1)=\displaystyle\sum_{t=1}^n\alpha_{1,t}e_t,\\[2mm]
d(e_i)=\big((i-2)\alpha_{1,1}+\alpha_{2,2}\big)e_i+\displaystyle\sum_{t=3}^{\,n-i+2}\alpha_{2,t}\,e_{t+i-2},\quad 2\le i\le n,
\end{array}
\right.
\]
for parameters $\alpha_{1,1},\ldots,\alpha_{1,n},\alpha_{2,2},\ldots,\alpha_{2,n}\in\mathbb{C}$. The maximal torus is two-dimensional. Thus, the Lie algebra $\mathrm{L}_n$ admits $k$-dimensional solvable extensions up to $k\leq 2$.
\end{rem}

\begin{rem}
For the filiform Lie algebra $\mathrm R_n$ on the basis $\{e_1,\dots,e_n\}$, every derivation $d\in \textrm{Der}(\mathrm R_n)$ is 
\[
\left\{
\begin{array}{ll}
d(e_1)=\alpha_{1,1}e_1+\displaystyle\sum_{t=3}^{\,n}\alpha_{1,t}e_t,\\[2mm]
d(e_2)=2\alpha_{1,1}e_2+\displaystyle\sum_{t=3}^{\,n}\alpha_{2,t}e_t,\\[2mm]
d(e_i)=i\alpha_{1,1}e_i+\alpha_{2,3}e_{i+1}+\displaystyle\sum_{t=4}^{\,n-i+2}(\alpha_{2,t}-\alpha_{1,t-1})e_{t+i-2},\quad 3\le i\le n,
\end{array}
\right.
\]
for parameters $\alpha_{1,1},\alpha_{1,3},\ldots,\alpha_{1,n},\alpha_{2,3},\ldots,\alpha_{2,n}\in\mathbb{C}$.
\end{rem}

Firstly, we describe the one-dimensional solvable extensions  by means of non-nilpotent derivations of $\mathrm{L}_n$.

\begin{thm}\label{thm6.2}
Let $\cal R$ be a one-dimensional solvable extension of $\cal N$ obtained from non-nilpotent derivations of $\mathrm L_n$. Then there is a basis $\{e_1,\dots,e_n,x\}$ such that the multiplication by $x$ is
\[
\left\{
\begin{array}{ll}
[e_i,x]=ie_i, & 1\le i\le n,\\[2mm]
\{e_1,x\}=\beta_{1,1}e_1+\beta_{1,n-1}e_{n-1}+\beta_{1,n}e_n,\\[2mm]
\{e_i,x\}=i\beta_{1,1}e_i+\displaystyle\sum_{t=3}^{\,n-i+2}\beta_{2,t}\,e_{t+i-2}, & 2\le i\le n,
\end{array}
\right.
\]
for parameters $\beta_{1,1},\beta_{1,n-1},\beta_{1,n},\beta_{2,3},\ldots,\beta_{2,n}\in\mathbb{C}$.
\end{thm}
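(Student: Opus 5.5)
Write $d_1=ad^{(1)}_x$ and $d_2=ad^{(2)}_x$. By the semidirect product proposition, $\mathcal R$ is a compatible Lie algebra exactly when $d_1\in\textrm{Der}(\mathrm L_n)$, $d_2\in\textrm{Der}(\mathrm R_n)$, and the mixed condition (3) holds. The hypothesis says $d_1$ is a non-nilpotent derivation of $\mathrm L_n$. The plan has three steps: normalize $d_1$, use condition (3) to force $d_1$ to be a derivation of $\mathrm R_n$ as well, and then use condition (3) again to find the shape of $d_2$.

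\emph{Step 1: normalizing $d_1$.} By Remark~\ref{rmderL}, $d_1$ is determined by parameters $\alpha_{1,t}$ and $\alpha_{2,t}$, with eigenvalues $\alpha_{1,1}$ on $e_1$ and $(i-2)\alpha_{1,1}+\alpha_{2,2}$ on $e_i$. Non-nilpotency means $(\alpha_{1,1},\alpha_{2,2})\neq(0,0)$. I would change basis by $x\mapsto x-\mathrm{ad}(y)$ for suitable $y\in\mathcal N$ and, where needed, by automorphisms of $\mathcal N$, so that $d_1$ becomes diagonal. This uses that $d_1$ is semisimple modulo inner derivations, in the spirit of Mubarakzyanov's method. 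Replacing $x$ by $x-\mathrm{ad}(y)$ changes $d_2$ only by $\{\cdot,y\}$, so the compatibility conditions are preserved.

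\emph{Step 2: pinning down $d_1$ (the main obstacle).} Substitute the diagonal $d_1$ and a general $d_2\in\textrm{Der}(\mathrm R_n)$ into condition (3), and evaluate on the pairs $(e_2,e_i)$ and $(e_1,e_i)$. For $(e_2,e_i)$ with $3\le i\le n-2$, the $d_1$-part gives
$$\big(\lambda_{i+2}-\lambda_2-\lambda_i\big)e_{i+2},$$
where $\lambda_j$ is the eigenvalue of $d_1$ on $e_j$. The difficulty is that the $d_2$-part also contributes, namely $d_2[e_2,e_i]-[d_2e_2,e_i]-[e_2,d_2e_i]$ computed with the $\mathrm L_n$ bracket. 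That expression is the obstruction to $d_2$ being a derivation of $\mathrm L_n$, and it need not vanish. To separate the two contributions I would compare coefficients along $e_{i+2}$ and use the explicit form of $\textrm{Der}(\mathrm R_n)$: its diagonal part is $i\alpha_{1,1}$, so the diagonal of $d_2$ is already a derivation of $\mathrm L_n$, and the only possible leftover comes from the nilpotent terms $\alpha_{1,t}$ and $\alpha_{2,t}$. The resulting conditions should force $\lambda_{i+2}=\lambda_2+\lambda_i$. Since $\lambda_j=(j-2)\alpha_{1,1}+\alpha_{2,2}$ for $j\ge2$, this gives $2\alpha_{1,1}=\alpha_{2,2}$, hence $\lambda_j=j\alpha_{1,1}$ for every $j$. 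Non-nilpotency gives $\alpha_{1,1}\neq0$, so rescaling $x$ yields $[e_i,x]=ie_i$. For $n$ small, where few pairs $(e_2,e_i)$ exist, this argument needs separate checking.

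\emph{Step 3: the shape of $d_2$.} Since $d_1$ is now a derivation of both $\mathrm L_n$ and $\mathrm R_n$, its terms in condition (3) cancel, and condition (3) reduces to $d_2\in\textrm{Der}(\mathrm L_n)$. Combined with $d_2\in\textrm{Der}(\mathrm R_n)$, this gives $d_2\in\textrm{Der}(\mathcal N)$. By the earlier remark describing $\textrm{Der}(\mathcal N)$, $d_2$ has exactly the stated form with parameters $\beta$, which proves the theorem. The delicate point throughout is Step 2: showing that the nilpotent part of $d_2$ cannot compensate for $\lambda_{i+2}\neq\lambda_2+\lambda_i$. I would settle it by examining the lowest-weight coefficients of condition (3) on $(e_2,e_3)$ and $(e_1,e_2)$ first.
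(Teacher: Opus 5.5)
\textbf{There is a genuine gap in Step 1.} A non-nilpotent derivation of $\mathrm L_n$ is in general not semisimple modulo inner derivations. The changes you allow (replacing $x$ by $cx-y$ with $y\in\mathcal N$, and automorphisms of $\mathcal N$, which must preserve both tables) cannot diagonalize $d_1$ before compatibility has been used.

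Inner derivations take values in $\mathcal N^2=\langle e_3,\dots,e_n\rangle$. Every automorphism of $\mathcal N$ preserves $\Psi_R=\{\,,\,\}-[\,,\,]$, hence preserves $\{u:\Psi_R(u,\mathcal N)=0\}=\langle e_1,e_{n-1},e_n\rangle$, and so also $\langle e_1\rangle+\mathcal N^2$. So whether $d_1$ leaves $\langle e_1\rangle+\mathcal N^2$ invariant, i.e.\ whether the coefficient $\alpha_{1,2}$ of $e_2$ in $d_1(e_1)$ vanishes, is unaffected by all your normalizations. The $\mathrm L_n$-automorphism $e_1\mapsto e_1+ce_2$ that would remove it destroys the $\mathrm R_n$ table. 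For example, $d_1(e_1)=e_1+e_2$, $d_1(e_i)=(i-1)e_i$ can never be made diagonal. Resonant derivations such as $d_1(e_1)=0$, $d_1(e_i)=e_i+e_{i+2}$ are not semisimple modulo inner derivations either.

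Your Step 2 treats only diagonal $d_1$, so these cases are never excluded. Step 3 uses $d_1\in\textrm{Der}(\mathrm R_n)$, which is false as soon as $\alpha_{1,2}\neq0$. Ruling such $d_1$ out is exactly what the compatibility condition has to do, and it must be done before any diagonalization.

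\textbf{The repair is the paper's order of operations.} Remove only the inner part, replacing $x$ by $x+\alpha_{2,3}e_1-\sum_{t\ge3}\alpha_{1,t}e_{t-1}$, and apply condition (3) to the still non-diagonal $d_1$. What you call the main obstacle then disappears.
\begin{itemize}
\item On $(e_2,e_i)$, $3\le i\le n-2$, the $d_2$-part is identically zero. This is because $[e_j,e_k]=0$ in $\mathrm L_n$ for $j,k\ge2$, and derivations of $\mathrm R_n$ map $e_j$, $j\ge2$, into $\langle e_2,\dots,e_n\rangle$. The nilpotent part of $d_1$ also cancels, leaving $(2\alpha_{1,1}-\alpha_{2,2})e_{i+2}=0$.
\item On $(e_1,e_i)$, $3\le i\le n-2$, only $-\alpha_{1,2}e_{i+2}$ survives, so $\alpha_{1,2}=0$.
\end{itemize}
Non-nilpotency then forces $\alpha_{1,1}\neq0$, and the eigenvalues $i\alpha_{1,1}$ are distinct. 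The triangular change $e_i\mapsto e_i+\sum_{t\ge4}A_te_{t+i-2}$ is an automorphism of both $\mathrm L_n$ and $\mathrm R_n$, and it diagonalizes $d_1$.

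From there your Step 3 is correct. It is a cleaner way to get the shape of $d_2$ than the paper's explicit constraint list ($\beta_{1,t}=0$, $3\le t\le n-2$): once $d_1\in\textrm{Der}(\mathcal N)$, condition (3) says exactly that $d_2\in\textrm{Der}(\mathrm L_n)$.

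Your caution about small $n$ is justified. For $n\le4$ one has $\Psi_R=0$. For $n=5$, the map $e_1\mapsto e_2$, $e_4\mapsto e_5$ (other basis vectors to $0$) is a derivation of $\mathrm R_5$ that is missing from the quoted description of $\textrm{Der}(\mathrm R_n)$. With it, $(e_1,e_3)$ only yields $\alpha_{1,2}=\beta_{1,2}$. The argument therefore needs $n\ge6$.
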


\begin{proof}
From the derivations of $\mathrm L_n$ we may write
\[
\left\{
\begin{array}{ll}
[e_1,x]=\alpha_{1,1}e_1+\displaystyle\sum_{t=2}^n\alpha_{1,t}e_t,\\[2mm]
[e_i,x]=\big((i-2)\alpha_{1,1}+\alpha_{2,2}\big)e_i+\displaystyle\sum_{t=3}^{\,n-i+2}\alpha_{2,t}\,e_{t+i-2},\quad 2\le i\le n.
\end{array}
\right.
\]
Replacing $x$ by $x+\alpha_{2,3}e_1-\sum_{t=3}^n\alpha_{1,t}\,e_{t-1}$ gives
\[
[e_1,x]=\alpha_{1,1}e_1+\alpha_{1,2}e_2,\qquad
[e_i,x]=\big((i-2)\alpha_{1,1}+\alpha_{2,2}\big)e_i+\sum_{t=4}^{n-i+2}\alpha_{2,t}\,e_{t+i-2}.
\]
Using the description of $\textrm{Der}(\cal N_2)$ we obtain
\[
\left\{
\begin{array}{ll}
\{e_1,x\}=\beta_{1,1}e_1+\displaystyle\sum_{t=3}^{\,n}\beta_{1,t}e_t,\\[2mm]
\{e_2,x\}=2\beta_{1,1}e_2+\displaystyle\sum_{t=3}^{\,n}\beta_{2,t}e_t,\\[2mm]
\{e_i,x\}=i\beta_{1,1}e_i+\beta_{2,3}e_{i+1}+\displaystyle\sum_{t=4}^{\,n-i+2}(\beta_{2,t}-\beta_{1,t-1})e_{t+i-2},\quad 3\le i\le n.
\end{array}
\right.
\]
By the compatibility we obtain the constraints
\[
\alpha_{1,2}=0,\qquad \alpha_{2,2}=2\alpha_{1,1},\qquad \beta_{1,t}=0\ (3\le t\le n-2).
\]
Since the extension is non-nilpotent, then $\alpha_{1,1}\ne0$. We rescale $x$ so that $\alpha_{1,1}=1$.  
Now the change of basis
\[
e_1'=e_1,\qquad e_i'=e_i+\sum_{t=4}^{\,n-i+2}A_t\,e_{t+i-2}\quad (2\le i\le n),
\]
with coefficients $A_t$ chosen recursively, we obtain the stated form of the extension $\cal R$. Note that the fact that $\cal R$ has special nilradical $\cal N$ is clear.
%$$A_4=-\frac{\alpha_{2,4}}{2},\quad A_5=-\frac{\alpha_{2,5}}{3},\quad A_i=-\frac{1}{i-2}(\alpha_{2,i}+\sum\limits_{j=4}^{i-2}A_j\alpha_{2, i-j+2}),\quad 6\le i\le n,$$
\end{proof}

Next, we consider the description of the two-dimensional solvable extensions of $\cal N$ by means of non-nilpotent derivations of the Lie algebra $\mathrm{L}_n$.

 {
 
\begin{thm}\label{N_1(2)} There is no $(n+2)$-dimensional solvable compatible Lie extension of $\cal N$ by means of non-nilpotent derivations of $\mathrm{L}_n$. 
\end{thm}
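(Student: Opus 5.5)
Suppose, towards a contradiction, that $\cal R=\cal N\rtimes\textrm{span}(x,y)$ is a solvable compatible Lie algebra with special nilradical $\cal N$. Suppose further that $ad^{(1)}_x$ and $ad^{(1)}_y$ restricted to $\cal N$ are derivations of $\mathrm L_n$, with $\textrm{span}(x,y)$ acting by linearly nil-independent operators. The plan is to apply the proof of Theorem~\ref{thm6.2} to every element $z$ of $\textrm{span}(x,y)$.

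First, the compatibility condition (3) of the semidirect product proposition constrains each $z=\lambda x+\mu y$. As in that proof, it forces the diagonal part of $ad^{(1)}_z$ to satisfy $\alpha_{1,2}(z)=0$ and $\alpha_{2,2}(z)=2\alpha_{1,1}(z)$. Hence the diagonal part of $ad^{(1)}_z$ in the basis $e_1,\dots,e_n$ is $\alpha_{1,1}(z)\,\textrm{diag}(1,2,\dots,n)$. In other words, although the torus of $\textrm{Der}(\mathrm L_n)$ is two-dimensional (Remark~\ref{rmderL}), compatibility with $\mathrm R_n$ cuts the admissible semisimple parts down to the line spanned by $\textrm{diag}(1,\dots,n)$. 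The same conclusion holds on the second bracket, since $\{e_i,z\}$ has diagonal $i\beta_{1,1}(z)$. This step is linear in $z$, so it suffices to check it for $x$ and $y$ separately; the computation is exactly the one already carried out for Theorem~\ref{thm6.2}.

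Next, define the linear functional $z\mapsto\alpha_{1,1}(z)$ on $\textrm{span}(x,y)$. Its kernel is nonzero, so some nonzero $z_0=\lambda x+\mu y$ satisfies $\alpha_{1,1}(z_0)=0$. Then $ad^{(1)}_{z_0}$ is strictly upper triangular with respect to the filtration $e_1,\dots,e_n$, after the inner-derivation adjustment used in Theorem~\ref{thm6.2}, and so it is nilpotent. The main obstacle is the second bracket: we must also show that $ad^{(2)}_{z_0}$ is nilpotent on $\cal N$. To do this, I would feed $\alpha_{1,1}(z_0)=0$ into the remaining mixed compatibility equations, for instance on the pairs $(e_1,e_i)$ and $(e_2,e_3)$. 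The aim is to derive $\beta_{1,1}(z_0)=0$ by comparing the $e_{i+1}$ coefficient of $\{[e_1,e_i],z_0\}$-type terms. If this relation does not come out directly, I would instead use the bracket $[x,y]\in\cal N$ and the Jacobi identity for $(e_i,x,y)$ to tie $\beta_{1,1}$ to $\alpha_{1,1}$.

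Once both $ad^{(1)}_{z_0}$ and $ad^{(2)}_{z_0}$ are nilpotent, $\cal N+\textrm{span}(z_0)$ is a nilpotent ideal of $\cal R$ in both brackets. It is also special, because it contains $\cal C^1(\cal R)\supseteq[[\cal R,\cal R]]$, so every product of it with an ideal lands in an ideal. This contradicts the maximality of the special nilradical $\cal N$, equivalently the requirement that the extension use non-nilpotent (nil-independent) derivations. Therefore no $(n+2)$-dimensional solvable extension of this type exists.
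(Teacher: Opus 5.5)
The step you call the main obstacle cannot be carried out, because it is false. The condition $\alpha_{1,1}(z_0)=0$ does not force $ad^{(2)}_{z_0}$ to be nilpotent. So neither the mixed identities on $(e_1,e_i)$, $(e_2,e_3)$ nor the Jacobi identity on $(e_i,x,y)$ can produce $\beta_{1,1}(z_0)=0$.

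The structural reason is that $h=\textrm{diag}(1,\dots,n)$ is a derivation of both $\mathrm L_n$ and $\mathrm R_n$. Given conditions (1) and (2), condition (3) says exactly that $D=ad^{(1)}_z-ad^{(2)}_z$ satisfies $D\Psi_R(a,b)=\Psi_R(Da,b)+\Psi_R(a,Db)$, where $\Psi_R=\mathrm R_n-\mathrm L_n$. This is insensitive to adding a multiple of $h$ to $ad^{(2)}_z$ alone. The identities on $(e_i,x,y)$ only involve commutators of triangular operators, whose diagonals vanish.

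Explicitly, set $[e_i,x]=ie_i$, $\{e_i,x\}=0$, $[e_i,y]=0$, $\{e_i,y\}=ie_i$ $(1\le i\le n)$, and $[x,y]=\{x,y\}=0$.
\begin{itemize}
\item Each bracket is a semidirect product by a derivation.
\item The sum bracket is $(\mathcal N,2\mathrm L_n+\Psi_R)$ extended by two commuting copies of $h$, so this is an $(n+2)$-dimensional solvable compatible Lie algebra.
\item Every ideal not contained in $\mathcal N$ contains $\mathcal N$ and is not nilpotent. Ideals inside $\mathcal N$ are $h$-stable, hence spanned by basis vectors, and one checks $\mathcal N$ is special. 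So the special nilradical is $\mathcal N$.
\end{itemize}
Both first-bracket actions are derivations of $\mathrm L_n$, your $z_0$ is $y$, and $ad^{(2)}_y=h$ is not nilpotent. Hence the ``equivalently'' in your last paragraph is wrong. Maximality of the special nilradical does not imply nil-independence of $ad^{(1)}_x, ad^{(1)}_y$. The theorem is false if its hypothesis is read through the nilradical, or as ``both first-bracket actions are non-nilpotent'' (pass to the basis $x,\,x+y$).

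What rescues the argument is the hypothesis the paper actually uses: $ad^{(1)}_x$ and $ad^{(1)}_y$ are nil-independent derivations of $\mathrm L_n$, i.e.\ $\alpha_{1,1}\beta_{2,2}-\alpha_{2,2}\beta_{1,1}\neq 0$. Under it, your first step is correct and already finishes the proof. Condition (3) on $(e_2,e_i,z)$, $3\le i\le n-2$, read at the coefficient of $e_{i+2}$, gives $\alpha_{2,2}(z)=2\alpha_{1,1}(z)$ for every $z$. So the semisimple parts of $ad^{(1)}_x$ and $ad^{(1)}_y$ are proportional, and the nonzero $z_0$ with $ad^{(1)}_{z_0}$ nilpotent contradicts nil-independence directly.

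This is the same mechanism as the paper's proof. The paper normalizes $y$ to act in the first bracket by $\textrm{diag}(0,1,\dots,1)$ and obtains $Z(e_2,e_4,y)=e_6+\cdots\neq 0$. Your version runs the computation uniformly in $z$ and avoids the normalizations. So stop as soon as $z_0$ is found, and discard the second-bracket part.
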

\begin{proof} Let $\cal R$ be a $(n+2)$-dimensional solvable Lie algebras with nilradical $\mathrm{L}_n$ and $\{e_1,\dots, e_n, x,y\}$ is a basis of $\cal R$. We can write the products of $\cal R$ as
$$\left\{\begin{array}{lllllll}
[e_1,x]=\alpha_{1,1}e_1+\sum\limits_{t=2}^n\alpha_{1,t}e_t,\\[3mm]
[e_i,x]=((i-2)\alpha_{1,1}+\alpha_{2,2})e_i+\sum\limits_{t=i+1}^{n}\alpha_{2,t-i+2}e_t,& 2\le i\le n,\\[3mm]
[e_1,y]=\beta_{1,1}e_1+\sum\limits_{t=2}^n\beta_{1,t}e_t,\\[3mm]
[e_i,y]=((i-2)\beta_{1,1}+\beta_{2,2})e_i+\sum\limits_{t=i+1}^{n}\beta_{2,t-i+2}e_t,& 2\le i\le n,\\[3mm]
\end{array}\right.$$
where  $\alpha_{1,1}\beta_{2,2}-\alpha_{2,2}\beta_{1,1}\neq 0$. Replacing $x$ and $y$ by   
$$x'=\frac{1}{\big(\alpha_{1,1}\beta_{2,2}-\alpha_{2,2}\beta_{1,1}\big)}\big(\beta_{2,2}x-
\alpha_{2,2}y\big),\quad 
y'=\frac{1}{\big(\alpha_{1,1}\beta_{2,2}-\alpha_{2,2}\beta_{1,1}\big)}\Big(\alpha_{1,1}y-\beta_{1,1}x\Big),$$ 
we can assume the products is:
$$\left\{\begin{array}{lllll}
[e_1,x]=e_1+\sum\limits_{t=2}^n\alpha_{1,t}e_t,\\[3mm]
[e_i,x]=(i-2)e_i+\sum\limits_{t=i+1}^{n}\alpha_{2,t-i+2}e_t,& 2\le i\le n,\\[3mm]
[e_1,y]=\sum\limits_{t=2}^n\beta_{1,t}e_t,\\[3mm]
[e_i,y]=e_i+\sum\limits_{t=i+1}^{n}\beta_{2,t-i+2}e_t,& 2\le i\le n.
\end{array}\right.$$
In addition, setting $x'=x+\alpha_{2,3}e_1-\sum\limits_{t=2}^{n-1}\alpha_{1,t+1}e_{t}$ and $y'=y+\beta_{2,3}e_1-\sum\limits_{t=2}^{n-1}\beta_{1,t+1}e_t,$
we can assume 
$$\begin{array}{llll}
[e_1,x]=e_1+\alpha_{1,2}e_2, & [e_i,x]=(i-2)e_i+\sum\limits_{t=i+2}^{n}\alpha_{2,t-i+2}e_t, & 2\le i\le n,\\[3mm]
[e_1,y]=\beta_{1,2}e_2, & [e_i,y]=e_i+\sum\limits_{t=i+2}^{n}\beta_{2,t-i+2}e_t, & 2\le i\le n. \\[3mm]
\end{array}$$

Next, by choosing the new basis:  $$e_1'=e_1,\quad %e_2'=e_2+\sum\limits_{t=4}^{n}A_te_t,\quad
 e_i'=e_i+\sum\limits_{t=4}^{n-i+2}A_te_{t+i-2},\ \  2\le i\le n, \quad \mbox{with} \quad A_t=\frac{1}{2-t}\big(\alpha_{2,t}+\sum\limits_{p=4}^{t-2}\alpha_{2,p}A_{t-p+2}),$$
we obtain $[e_i,x]=(i-2)e_i, \ 2\le i\le n.$ 
It should be noted that under the above basis transformation the multiplications table of $\mathrm{R}_n$ does not change.
Applying the Jacobi identity for the obtained products and using the change $x'=x-\gamma e_n$ we obtain 
$$\cal R:\left\{\begin{array}{lllll}
[e_1,x]=e_1+\alpha e_2,& [e_1,y]=-\alpha e_2,\\[3mm]
[e_i,x]=(i-2)e_i,& [e_i,y]=e_i,& 2\le i\le n.
\end{array}\right.$$
For $\varphi\in Hom(\cal R \wedge \cal R, \cal N)$ such that $\{e_i,e_j\}=\varphi(e_i,e_j)$ and
$\varphi(e_i,y)=\sum\limits_{t=1}^{n}\alpha_{i,t}e_t$, we verify the compatibility.

\begin{center}
    \begin{tabular}{lll}
       \qquad {2-cocyle identity} & &\quad\quad\quad Constraints\\
        \hline \hline
\\
        $Z(e_1,e_i,y)=0,\quad 2\le i\le 5,$ &\quad $\Rightarrow $\quad &\quad $\left\{\begin{array}{lll}
        \alpha_{i,t}=0,&         \alpha_{i,i}=(i-2)\alpha_{1,1}+\alpha_{2,2},\\[3mm]
        1\le t<i,&3\le i\le 5,\end{array}\right.$ \\[6mm]
        
        $Z(e_2,e_3,y)=0,$ &\quad $\Rightarrow $\quad &\quad $\alpha_{2,2}=2\alpha_{1,1}$,\quad $\alpha_{2,1}=0.$\\[2mm]
        \end{tabular}
\end{center}

Finally, we get 
$Z(e_2,e_4,y)=
%[e_2,\varphi(e_4,y)]-[\varphi(e_2,e_4),y]+[\varphi(e_2,y),e_4]+\varphi(e_2,[e_4,y])-\varphi([e_2,e_4],y)+\varphi([e_2,y],e_4)=$$$$
%=4B_{1,1}e_6-e_6+2B_{1,1}e_6+e_6-%6B_{1,1}e_6+e_6+\sum\limits_{t=7}^n(*)e_t=
e_6+\sum\limits_{t=7}^n(*)e_t\neq0.$ This contradiction completes the proof of theorem. 
%Therefore $\cal R_2$ does not have 2-Lie cocycle suitable for a Lie algebra which has ideal $\mathrm{R}_n$.
\end{proof}

}

\subsubsection{Solvable extensions of $\cal N$ via non-nilpotent derivations of $\mathrm R_n$.}

We now describe the one-dimensional solvable extensions arising from non-nilpotent derivations of $\mathrm R_n$. Since the maximal torus in $\textrm{Der}(\mathrm R_n)$ is one-dimensional, only one-dimensional solvable extensions can occur.

\begin{thm}\label{thm6.4}
Let $\cal R$ be a one-dimensional solvable extension of $\cal N$ obtained from non-nilpotent derivations of $\mathrm R_n$. Then there exists a basis $\{e_1,\dots,e_n,x\}$ such that the multiplication by $x$ is
\[
\left\{
\begin{array}{ll}
[e_1,x]=\alpha_{1,1}e_1+\alpha_{1,n-1}e_{n-1}+\alpha_{1,n}e_n,\\[2mm]
[e_i,x]=i\,\alpha_{1,1}e_i+\displaystyle\sum_{t=4}^{\,n-i+2}\alpha_{2,t}\,e_{t+i-2}, & 2\le i\le n,\\[2mm]
\{e_i,x\}=i\,e_i, & 1\le i\le n,
\end{array}
\right.
\]
for parameters $\alpha_{1,1},\alpha_{1,n-1},\alpha_{1,n},\alpha_{2,4},\ldots,\alpha_{2,n}\in\mathbb{C}$.
\end{thm}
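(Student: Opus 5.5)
We mirror the proof of Theorem~\ref{thm6.2}, interchanging the roles of the two brackets. Here $ad^{(2)}_x=\{-,x\}$ is a non-nilpotent derivation of $\mathrm R_n$ and $ad^{(1)}_x=[-,x]$ is a derivation of $\mathrm L_n$.

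First we write $\{e_i,x\}$ using the description of $\textrm{Der}(\mathrm R_n)$ with parameters $\beta_{1,1},\beta_{1,t},\beta_{2,t}$. Non-nilpotency of $ad^{(2)}_x$ forces $\beta_{1,1}\neq0$, since the torus of $\textrm{Der}(\mathrm R_n)$ is one-dimensional. We rescale $x$ so that $\beta_{1,1}=1$. Then $ad^{(2)}_x$ has diagonal part $\mathrm{diag}(1,2,\dots,n)$. We remove as much of the nilpotent part as possible in two ways.

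\begin{itemize}
\item Replacing $x$ by $x+\sum c_je_j$ adds the inner derivation $\{-,\sum c_j e_j\}$. With suitable multiples of $e_1,\dots,e_{n-1}$ this kills $\beta_{1,t}$ for $3\le t\le n$ and $\beta_{2,3}$. The multiple of $e_1$ kills $\beta_{2,3}$; the multiples of $e_{t-1}$ kill $\beta_{1,t}$.
\item We use a change of basis $e_1'=e_1$, $e_i'=e_i+\sum_{t\ge4}A_te_{t+i-2}$ for $i\ge2$, which preserves $\mathrm L_n$. One must check that, with the $A_t$ chosen compatibly, it also preserves the $\mathrm R_n$ table. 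The $A_t$ are solved recursively from $(t-2)A_t=-(\beta_{2,t}+\dots)$, exactly as in the proof of Theorem~\ref{N_1(2)}. This brings $\{e_i,x\}$ to $ie_i$.
\end{itemize}

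Because $ad^{(2)}_x$ is now the semisimple derivation $\mathrm{diag}(1,\dots,n)$, it lies in $\textrm{Der}(\mathrm L_n)\cap\textrm{Der}(\mathrm R_n)$. The inner-derivation shifts above change $[-,x]$ only by inner derivations of $\mathrm L_n$, so $[-,x]$ is still a derivation of $\mathrm L_n$. We write it with parameters $\alpha_{1,t},\alpha_{2,t}$ as in Remark~\ref{rmderL}.

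The remaining task is condition (3) of the compatibility proposition: for all $i,j$,
\[
[\{e_i,e_j\},x]-\{[e_i,x],e_j\}-\{e_i,[e_j,x]\}+\{[e_i,e_j],x\}-[\{e_i,x\},e_j]-[e_i,\{e_j,x\}]=0 .
\]
Since $ad^{(2)}_x$ is $\mathrm{diag}(1,\dots,n)$ and both $\mathrm L_n$ and $\mathrm R_n$ are graded by $e_i\mapsto i$, the last three terms cancel. Condition (3) therefore reduces to requiring that $ad^{(1)}_x$ be a derivation of the $\mathrm R_n$-bracket as well, i.e.\ $ad^{(1)}_x\in \textrm{Der}(\mathrm L_n)\cap\textrm{Der}(\mathrm R_n)=\textrm{Der}(\cal N)$. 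By the earlier remark describing $\textrm{Der}(\cal N)$, this gives
\[
\alpha_{1,t}=0\ \ (2\le t\le n-2),\qquad \alpha_{2,2}=2\alpha_{1,1},
\]
and leaves $\alpha_{1,n-1},\alpha_{1,n},\alpha_{2,t}$ free.

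It remains to show $\alpha_{2,3}$ can be taken zero, matching the stated sum starting at $t=4$. We try a further shift $x\mapsto x+ce_1$. This changes $\{-,x\}$ by a multiple of $\{-,e_1\}$, which we then re-absorb, so we must check the normal form of $\{-,x\}$ survives. The $e_1$-shift alters $\alpha_{2,3}$ (through $[e_i,e_1]=-e_{i+1}$) and $\beta_{2,3}$ simultaneously. The mixed constraint tying them, read off from $L(e_1,e_2,x)$ or $L(e_2,e_3,x)$, should force $\alpha_{2,3}$ to vanish once $\beta_{2,3}=0$.

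The main obstacle is this bookkeeping. We must verify that the normalizing change of basis for $\{-,x\}$ keeps both multiplication tables intact, and that the low-index compatibility equations kill $\alpha_{2,3}$ rather than leaving it free. Finally, the special nilradical is $\cal N$ because $ad_x$ is non-nilpotent, as noted before the subsection.
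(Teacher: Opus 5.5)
The gap is in your final step, and the mechanism you propose for it cannot work. Up to the point where condition (3) becomes $[-,x]\in\textrm{Der}(\mathcal N)$ you follow the paper's route. Your observation that, once $\{-,x\}=\mathrm{diag}(1,\dots,n)$, condition (3) is \emph{exactly} $[-,x]\in\textrm{Der}(\mathrm R_n)$ is a cleaner justification than the paper's spot checks of $L(e_2,e_3,x)$ and $L(e_1,e_2,x)$.

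Put $h=\mathrm{diag}(1,\dots,n)$ and $N=\mathrm{ad}_{e_1}$, so $Ne_1=0$ and $Ne_i=e_{i+1}$. The $\alpha_{2,3}$-term of $[-,x]$ is $\alpha_{2,3}N$, and $N$ is an inner derivation of both $\mathrm L_n$ and $\mathrm R_n$. You have shown that compatibility is equivalent to $[-,x]\in\textrm{Der}(\mathcal N)$. Hence no identity $L(e_i,e_j,x)=0$ constrains $\alpha_{2,3}$ or ties it to $\beta_{2,3}$; indeed $\{-,x\}=h+bN$, $[-,x]=\alpha_{1,1}h+aN$ is compatible for all $a,b$.

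What the shift actually does is the following. The substitution $x\mapsto x+ce_1$ turns $\{-,x\}$ into $h-cN=e^{cN}he^{-cN}$. You restore this to $h$ with the automorphism $e^{cN}$ of $\mathcal N$, which is exactly the $A_3$-term your triangular change of basis omits. The net effect on $[-,x]$ is $\alpha_{2,3}\mapsto\alpha_{2,3}+c(\alpha_{1,1}-1)$. So $\alpha_{2,3}$ can be removed precisely when $\alpha_{1,1}\neq 1$.

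For $\alpha_{1,1}=1$ it cannot be removed at all, so the statement with the sum starting at $t=4$ is not provable as written. Take $[-,x]=h+N$ and $\{-,x\}=h$. Both lie in $\textrm{Der}(\mathcal N)$, so this is a genuine one-dimensional extension with $\{-,x\}$ non-nilpotent. The bracket $[-,-]-\{-,-\}$ on $\mathcal R$ is a Lie bracket preserved by every isomorphism of compatible algebras. Here its derived algebra is $\mathrm{span}(e_3,\dots,e_n)$, of dimension $n-2$. For an algebra in the stated normal form, it is all of $\mathcal N$ when $\alpha_{1,1}\neq 1$ and is contained in $\mathrm{span}(e_4,\dots,e_n)$ when $\alpha_{1,1}=1$.

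The paper's own proof has the same hole. Its constraints are only $\alpha_{2,2}=2\alpha_{1,1}$ and $\alpha_{1,t}=0$ for $2\le t\le n-2$, and nothing there touches $\alpha_{2,3}$. What your argument, and the paper's, actually proves is the normal form with the sum running from $t=3$, exactly as in the description of $\textrm{Der}(\mathcal N)$. The $t=3$ term can be dropped only under the extra hypothesis $\alpha_{1,1}\neq 1$.
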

\begin{proof} By the description of $\textrm{Der}(\mathrm{R}_n)$, we get the following products:
\begin{equation}\label{eq11}
\left\{\begin{array}{llllllll}
\{e_1,x\}=\beta_{1,1}e_1+\sum\limits_{t=3}^{n}\beta_{1,t}e_t,\\[3mm]
\{e_2,x\}=2\beta_{1,1}e_2+\sum\limits_{t=3}^{n}\beta_{2,t}e_t,\\[3mm]
\{e_i,x\}=i\beta_{1,1}e_i+\beta_{2,3}e_{i+1}+\sum\limits_{t=4}^{n-i+2}(\beta_{2,t}-\beta_{1,t-1})e_{t+i-2},& 3\le i\le n.
\end{array}\right.
\end{equation}

Putting $x'=\frac{1}{\beta_{1,1}}(x+{\beta_{2,3}}e_1-\sum\limits_{t=3}^{n}\beta_{1,t}e_{t-1}),$
we can assume that \eqref{eq11} has the form
$$\left\{\begin{array}{lllllll}
\{e_1,x\}=e_1,\\[3mm]
%\{e_2,x\}=2e_2+\sum\limits_{t=4}^{n}\beta_{2,t}e_t,\\[3mm]
\{e_i,x\}=ie_i+\sum\limits_{t=4}^{n-i+2}\gamma_te_{t+i-2},& 2\le i\le n.
\end{array}\right.$$
%(where $\gamma_t=\beta_{2,t}-\beta_{1,t-1}.$)

Applying a triangular change of basis of the form
\[
e_1'=e_1,\qquad e_i'=e_i+\sum_{t=4}^{\,n-i+2}A_t\,e_{t+i-2}\quad(2\le i\le n),
\]
with coefficients \(A_t\) chosen recursively so as to cancel the higher terms, we obtain $\{e_i,x\}=i\,e_i\qquad(1\le i\le n).$
This change preserves the multiplication tables of \(\mathrm L_n\) and \(\mathrm R_n\), so we obtain the stated normal form of the extension \(\cal R\) with special nilradical \(\cal N\).
Moreover, by the description of $\textrm{Der}(\mathrm{L}_n)$, the products of the solvable extension of $\mathrm{L}_n$ are the followings:
$$\left\{\begin{array}{lllll}
[e_1,x]=\alpha_{1,1}e_1+\sum\limits_{t=2}^n\alpha_{1,t}e_t,\\[3mm]
[e_i,x]=((i-2)\alpha_{1,1}+\alpha_{2,2})e_i+\sum\limits_{t=i+1}^{n}\alpha_{2,t-i+2}e_t,& 2\le i\le n.\\[3mm]
\end{array}\right.$$

Finally, the compatibility condition yields the constraints:
\begin{center}
    \begin{tabular}{llllll}
        {Compatibility Rule} & & \quad &\quad  Constraints\\
        \hline \hline
\\
        $L(e_2,e_3,x)=0,$ &\quad &\quad $\Rightarrow $\quad\quad & $\alpha_{2,2}=2\alpha_{1,1},$\\[3mm]
        
        $L(e_1,e_2,x)=0,$ &\quad &\quad $\Rightarrow $\quad\quad & $\alpha_{1,t}=0,\quad 2\le t\le n-2.$
        \end{tabular}
\end{center}
The stated multiplication then follows. \end{proof}

\subsubsection{Solvable extensions of $\cal N$ via non-nilpotent derivations of $\cal N$.}

We study one-dimensional solvable extensions of $\cal N$ obtained from non-nilpotent derivations of $\cal N$ itself. Since the maximal torus of $\textrm{Der}(\cal N)$ is one-dimensional, only one-dimensional solvable extensions can arise in this case.

\begin{thm} Let $\cal R$ be a one-dimensional solvable extension of $\cal N$ by means of non-nilpotent derivations of $\textrm{Der}(\mathrm{L}_n)\cap \textrm{Der}(\mathrm{R}_n)$. Then there exists a basis $\{e_1,\dots,e_n,x\}$ such that the multiplication by $x$ is
$$\left\{\begin{array}{llllll}
[e_i,x]=ie_i,&1\le i\le n,\\[3mm]
\{e_1,x\}=\beta_{1,1}e_1+\beta_{1,n-1}e_{n-1}+\beta_{1,n}e_n,\\[3mm]
\{e_i,x\}=i\beta_{1,1}e_i+\sum\limits_{t=3}^{n-i+2}\beta_{2,t}e_{t+i-2},&2\le i\le n.
\end{array}\right.$$
for parameters $\beta_{1,1},\beta_{1,n-1},\beta_{1,n},\beta_{2,3},\ldots,\beta_{2,n}\in\mathbb{C}$.
\end{thm}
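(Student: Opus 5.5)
The plan follows the pattern of Theorems~\ref{thm6.2} and~\ref{thm6.4}. The difference is that now both $ad^{(1)}_x$ and $ad^{(2)}_x$ are taken in $\textrm{Der}(\cal N)=\textrm{Der}(\mathrm L_n)\cap\textrm{Der}(\mathrm R_n)$. Then conditions (1)--(3) of the semidirect-product proposition hold automatically, as observed right after that proposition. So the only work is to bring the pair to normal form.

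\textbf{Step 1: write the products by $x$.} By the remark describing $\textrm{Der}(\cal N)$, I write
\[
[e_1,x]=\alpha_{1,1}e_1+\alpha_{1,n-1}e_{n-1}+\alpha_{1,n}e_n,\qquad [e_i,x]=i\alpha_{1,1}e_i+\sum_{t=3}^{n-i+2}\alpha_{2,t}e_{t+i-2}\quad (2\le i\le n),
\]
and the same form for $\{e_i,x\}$ with parameters $\beta$. Since the extension is built from a non-nilpotent derivation, $ad^{(1)}_x$ is non-nilpotent, so $\alpha_{1,1}\neq 0$. If instead only $ad^{(2)}_x$ were non-nilpotent, I would argue symmetrically or note that this case is the one treated by Theorem~\ref{thm6.4}. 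I then rescale $x$ so that $\alpha_{1,1}=1$.

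\textbf{Step 2: kill the off-diagonal terms of $ad^{(1)}_x$ by changing $x$.} I replace $x$ by $x+\alpha_{2,3}e_1-\alpha_{1,n-1}e_{n-2}-\alpha_{1,n}e_{n-1}$. Since $[e_1,e_{n-2}]=e_{n-1}$ and $[e_1,e_{n-1}]=e_n$ in $\mathrm L_n$, this removes $\alpha_{1,n-1},\alpha_{1,n}$. Since $[e_i,e_1]=-e_{i+1}$, it also removes the $\alpha_{2,3}$ shift. Adding inner derivations of $\mathrm R_n$ to $ad^{(2)}_x$ leaves it of the same shape, so only the $\beta$'s are relabelled, and they stay arbitrary.

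\textbf{Step 3: kill the remaining terms $\alpha_{2,t}$, $t\ge4$, by a change of basis.} I use the triangular change
\[
e_1'=e_1,\qquad e_i'=e_i+\sum_{t=4}^{n-i+2}A_te_{t+i-2}\quad (2\le i\le n),
\]
with the $A_t$ defined recursively as in Theorem~\ref{thm6.2}, $A_t=\frac{1}{2-t}\bigl(\alpha_{2,t}+\sum_{p=4}^{t-2}\alpha_{2,p}A_{t-p+2}\bigr)$. The divisor $2-t$ is nonzero for $t\ge4$, so this diagonalizes $ad^{(1)}_x$ to $[e_i,x]=ie_i$. I must check that this change preserves both $\mathrm L_n$ and $\mathrm R_n$, as the paper asserts in the proofs of Theorems~\ref{N_1(2)} and~\ref{thm6.4}. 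For $\mathrm L_n$ this holds because the change commutes with $ad_{e_1}$. For $\mathrm R_n$ I would verify $\{e_2',e_i'\}=e_{i+2}'$ directly.

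\textbf{Main obstacle.} The delicate step is Step 3: confirming that the change of basis is an automorphism of $\mathrm R_n$, not only of $\mathrm L_n$. Transforming $ad^{(2)}_x$ by an automorphism of $\cal N$ keeps it inside $\textrm{Der}(\cal N)$. Hence its form stays as in the remark, and the parameters $\beta_{1,1},\beta_{1,n-1},\beta_{1,n},\beta_{2,3},\dots,\beta_{2,n}$ remain free. If the change did not preserve $\mathrm R_n$, I would instead use the exponential of the nilpotent part of $ad^{(1)}_x$, which lies in $\textrm{Der}(\cal N)$, to obtain an automorphism of $\cal N$ that does the diagonalization. Finally, $\cal N$ is the special nilradical because $ad^{(1)}_x$ is non-nilpotent.
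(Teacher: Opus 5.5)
Your proposal is correct and follows essentially the same route as the paper: write $ad^{(1)}_x$ and $ad^{(2)}_x$ in the $\textrm{Der}(\mathcal N)$ form, normalize $\alpha_{1,1}=1$, shift $x$ by an element of $\mathcal N$ to remove $\alpha_{1,n-1},\alpha_{1,n}$, diagonalize with the triangular change $e_i'=e_i+\sum_{t\ge4}A_te_{t+i-2}$, and treat $\beta_{1,1}\neq0$ symmetrically. Your minus signs in the shift of $x$ are the correct ones (the paper's $x'$ has a sign slip), and the Step 3 worry resolves in your favour: $\{e_s,e_i\}=0$ for $s,i\ge3$ gives $\{e_2',e_i'\}=e_{i+2}+\sum_{t=4}^{n-i}A_te_{t+i}=e_{i+2}'$, so the exponential fallback is unnecessary (and as phrased it would not help, since $ad^{(1)}_x$ has distinct eigenvalues $1,\dots,n$ and hence zero Jordan nilpotent part).
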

\begin{proof} By the derivations of $\textrm{Der}(\mathrm{L}_n)\cap \textrm{Der}(\mathrm{R}_n)$ we have the products in $\cal R$:
$$\left\{\begin{array}{lllll}
[e_1,x]=\alpha_{1,1}e_1+\alpha_{1,n-1}e_{n-1}+\alpha_{1,n}e_n,&\{e_1,x\}=\beta_{1,1}e_1+\beta_{1,n-1}e_{n-1}+\beta_{1,n}e_n,\\[3mm]
[e_i,x]=i\alpha_{1,1}e_i+\sum\limits_{t=3}^{n-i+2}\alpha_{2,t}e_{t+i-2},&
\{e_i,x\}=i\beta_{1,1}e_i+\sum\limits_{t=3}^{n-i+2}\beta_{2,t}e_{t+i-2},&2\le i\le n.\\[3mm]
\end{array}\right.$$

Clearly, the solvability of  $\cal R$ implies that  $(\alpha_{1,1},\beta_{1,1})\neq(0,0)$. 
Suppose $\alpha_{1,1}\neq0$, then by setting 
$$x'=\frac{1}{\alpha_{1,1}}(x+\alpha_{1,n-1}e_{n-2}+\alpha_{1,n}e_{n-1}),$$
%($x'=\frac{x+\beta_{1,n-1}e_{n-2}+\beta_{1,n}e_{n-1}}{\beta_{1,1}}$) then  without loss of generalities 
we obtain the multiplication
$$[e_1,x]=e_1,\quad [e_i,x]=ie_i+\sum\limits_{t=3}^{n-i+2}\alpha_{2,t}e_{t+i-2},\quad 2\le i\le n.$$
Moreover, by choosing the new basis  
$$e_1'=e_1,\quad e_i'=e_i+\sum\limits_{t=4}^{n-i+2}A_te_{t+i-2},\quad 2\le i\le n,$$ with the coefficients 
$$A_4=-\frac{\alpha_{2,4}}{2},\quad \quad A_i=-\frac{1}{i-2}(\alpha_{2,i}+\sum\limits_{j=3}^{i-2}A_j\alpha_{2,i-j+2}),\quad 5\le i\le n,$$ we obtain $[e_i,x]=ie_i$, for $1\le i\le n.$
Similarly, when $\beta_{1,1}\neq 0$, the same normalization is achieved and the two situations are completely symmetric, so we present them jointly in the statement.
\end{proof}

\subsection{Solvable extensions of the nilpotent compatible Lie algebra with Lie parts $\mathrm{L}_n$ and $\mathrm W_n$.}

Now, we consider the case of solvable compatible Lie extensions of a nilpotent compatible Lie algebra with component Lie algebras $\mathrm L_n$ and $\mathrm W_n$. Throughout this section, we denote by $\mathcal{N}$ the corresponding compatible Lie algebra. Recall the following result about the outer derivations of .

\begin{rem}
A basis of $H^1(\mathrm{W}_n, \mathrm{W}_n)$ is given by  
the following maps (see \cite{libroKluwer})
$$\left\{\begin{array}{llll}
h(e_i) = ie_i & 1\leq i \leq n,\\[3mm]
t_1(e_ 1)=e_n & t_1(e_i)=0,& i \neq 1,\\[3mm]
t_2(e_i) =e_{n-4+i} & 2\le i \le 4, & t_2(e_i)=0,&i \geq 5,\\[3mm]
t_3(e_i)=e_{n-3+i} &  2\le i\le 3, & t_3(e_i)=0,&i \geq 4,
\end{array}\right.$$
\end{rem}

The algebra $\cal N$ admits semisimple derivations. In particular, the maximal torus is one-dimensional.  Hence we study solvable extensions of $\cal N$ obtained from non-nilpotent derivations, considering separately the cases in which the derivation lies in $\mathrm L_n$, in $\mathrm W_n$ or in $\cal N$.

\subsubsection{Solvable extensions of $\cal N$ via non-nilpotent derivations of $\mathrm L_n$.}
We begin describing the one-dimensional solvable extensions  by means of non-nilpotent derivations of $\mathrm{L}_n$.

{\begin{thm}\label{thm4.12} Let $\cal R$ be a one-dimensional solvable extension of $\cal N$ by means of non-nilpotent derivations of $\mathrm{L}_n$. Then there exists a basis $\{e_1,\dots,e_n,x\}$ such that the multiplication by $x$ is $$\left\{\begin{array}{lllllll} [e_1,x]=e_1,\\[3mm] [e_i,x]=ie_i+\sum\limits_{k={{n-2}}}^{n}\alpha_{2,k}e_{k+i-2},& 2\le i\le n,\\[3mm] \{e_1,x\}=\gamma e_1+\gamma_{n-2} e_{n-1}+(\gamma_{n-1}+\tau_1)e_n,&\\[3mm] \{e_i,x\}=i\gamma e_i+\gamma_1\{e_i,e_1\}+\gamma_{n-2}\{e_i,e_{n-2}\}+\sum\limits_{k=2}^{3}\tau_kt_k(e_i),& 2\le i\le n.& \end{array}\right.$$ 
for parameters 
$\gamma,\gamma_1,\gamma_{n-2},\gamma_{n-1},\tau_1,\tau_2,\tau_3,\alpha_{2,4},\ldots,\alpha_{2,n}\in\mathbb{C}$.  
\end{thm}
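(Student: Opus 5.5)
Following the pattern of Theorems~\ref{thm6.2} and~\ref{thm6.4}, I will write the action of $x$ on $\mathrm L_n$ using the general derivation of Remark~\ref{rmderL}, and the action on $\mathrm W_n$ using $\textrm{Der}(\mathrm W_n)$. The latter is the sum of inner derivations, the torus $h$, and the outer classes $t_1,t_2,t_3$ of the basis of $H^1(\mathrm W_n,\mathrm W_n)$. So I will put
\[
\{e_i,x\}=\gamma\, h(e_i)+\sum_{j}\gamma_j\{e_i,e_j\}+\sum_{k=1}^{3}\tau_k t_k(e_i),\qquad 1\le i\le n,
\]
and, from Remark~\ref{rmderL},
\[
[e_1,x]=\alpha_{1,1}e_1+\sum_{t\ge 2}\alpha_{1,t}e_t,\qquad
[e_i,x]=\bigl((i-2)\alpha_{1,1}+\alpha_{2,2}\bigr)e_i+\sum_{t\ge 3}\alpha_{2,t}e_{t+i-2}\quad (i\ge 2).
\]
Since the derivation of $\mathrm L_n$ is assumed non-nilpotent, some diagonal entry $\alpha_{1,1}$ or $\alpha_{2,2}$ is nonzero.

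The main step is to impose the mixed condition~(3) of the semidirect-product Proposition, that is $L(e_i,e_j,x)=0$, for small index pairs. Both $\mathrm L_n$ and $\mathrm W_n$ contain the products $e_1e_i=e_{i+1}$, so the relevant brackets are $\{e_i,e_j\}$ with $i,j\ge 2$, which equal $c_{ij}e_{i+j}$.
\begin{itemize}
\item I expect $L(e_2,e_3,x)=0$ and $L(e_2,e_4,x)=0$ to force $\alpha_{2,2}=2\alpha_{1,1}$, in analogy with the $\mathrm R_n$ case. This makes the $\mathrm L_n$-diagonal part equal to $\alpha_{1,1}h$.
\item I expect $L(e_1,e_i,x)=0$ to kill $\alpha_{1,t}$ for $t\le n-2$, and likewise the low-order $\alpha_{2,t}$ that are incompatible with the $\mathrm W_n$ structure constants. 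Only terms of high weight should survive, which gives the range $k\ge n-2$ in the statement.
\item In the same equations I expect the inner-derivation coefficients $\gamma_j$ to be killed except for $\gamma_1$ and $\gamma_{n-2}$. The coefficient $\gamma_{n-1}$ should survive only through $\{e_1,e_{n-1}\}=e_n$, which merges with $t_1$ into the term $(\gamma_{n-1}+\tau_1)e_n$. Similarly, $\{e_1,e_{n-2}\}=e_{n-1}$ produces the $\gamma_{n-2}e_{n-1}$ term.
\end{itemize}

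After this, I will rescale $x$ by $1/\alpha_{1,1}$ to get $[e_1,x]=e_1$. One needs $\alpha_{1,1}\ne 0$ here; if $\alpha_{1,1}=0$, then $\alpha_{2,2}=2\alpha_{1,1}=0$, which contradicts non-nilpotency. I will then use $x\mapsto x+\sum c_je_j$ to absorb what inner parts remain. Note that a shift of $x$ changes both brackets at once, by $\mathrm{ad}$ in $\mathrm L_n$ and in $\mathrm W_n$ simultaneously. I would spend the freedom on making the $\mathrm L_n$ part as clean as possible, and then record the induced $\mathrm W_n$ inner part as the $\gamma_1,\gamma_{n-2}$ terms. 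A triangular change $e_i'=e_i+\sum A_te_{t+i-2}$ is not obviously available here, because it must preserve $\mathrm W_n$, and this is why the residual $\alpha_{2,k}$ remain in the statement. Finally, the special nilradical is $\mathcal N$ because $\mathrm{ad}\,x$ acts on $\mathcal N$ with nonzero diagonal part in the first bracket.

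The main obstacle is the bookkeeping in the $L(e_i,e_j,x)$ equations, because the structure constants of $\mathrm W_n$ are the factorial expressions~\eqref{Wn}. I would first pass to the isomorphic Witt form $[e_i,e_j]=(j-i)e_{i+j}$ to organize the computation by weight. The compatibility equation of weight $w$ involves only coefficients shifting degree by $w$, so the constraints can be read off degree by degree. That form, however, changes the $\mathrm L_n$ bracket by rescaling the basis, and this must be tracked. Determining exactly which shifts $k$ survive (the claimed $k\ge n-2$) is the delicate point: I would check it on $n=7,8$ and then generalize.
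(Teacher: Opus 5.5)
Your route is the paper's: the same parametrizations of $\textrm{Der}(\mathrm L_n)$ and $\textrm{Der}(\mathrm W_n)$, the triples $(e_1,e_2,x)$, $(e_1,e_3,x)$, $(e_2,e_3,x)$, and then a shift and rescaling of $x$. However, your predicted outcome of the compatibility equations is wrong, because you impose them before normalizing $x$.

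Let $c_{ij}$ be the constants of \eqref{Wn}. Without normalization, $L(e_1,e_2,x)=0$ reads $\sum_{s=2}^{n-3}c_{2,s+1}(\alpha_{1,s+1}-\gamma_s)e_{s+3}=0$, where $c_{2,s+1}=\frac{6(s-1)}{s(s+1)}\neq0$.
\begin{itemize}
\item So it gives $\alpha_{1,t}=\gamma_{t-1}$ for $3\le t\le n-2$, not $\alpha_{1,t}=\gamma_{t-1}=0$.
\item $\alpha_{2,3}$ is not constrained at all, since $[\cdot,e_1]=\{\cdot,e_1\}$ is a derivation of both brackets.
\item Only $\alpha_{1,2}$ is killed, via the $e_5$-coefficient of $L(e_1,e_3,x)$.
\end{itemize}
This is why the paper first replaces $x$ by $x+\alpha_{2,3}e_1-\sum_{t\ge3}\alpha_{1,t}e_{t-1}$, as in Theorem~\ref{thm6.2}. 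That change only alters the inner part of the $\mathrm W_n$-derivation, and only afterwards does the paper read off $\gamma_k=0$ for $2\le k\le n-3$. Your closing shift of $x$ repairs this, provided you spend it on the $\mathrm L_n$ inner part.

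Separately, checking $n=7,8$ does not prove the surviving range $k\ge n-2$. What you need is the following. $L(e_2,e_3,x)$ has $e_5$-coefficient $2\alpha_{1,1}-\alpha_{2,2}$. For $s\ge1$ its $e_{s+5}$-coefficient is
$\alpha_{2,s+2}(c_{23}-c_{s+2,3}-c_{2,s+3})=\alpha_{2,s+2}\frac{(s-1)(s^2+s+6)}{(s+1)(s+2)(s+3)}$,
which is nonzero for $s\ge2$.

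There is also a gap you share with the paper's proof: the ansatz $\textrm{Der}(\mathrm W_n)=\mathbb{C}h+\textrm{inner}+\textrm{span}(t_1,t_2,t_3)$ is incorrect.
\begin{itemize}
\item $t_1=\{\cdot,e_{n-1}\}$ is inner, which is why only $\gamma_{n-1}+\tau_1$ ever appears.
\item The map $E\colon e_1\mapsto e_{n-1}$, $e_i\mapsto0$ for $i\ge2$, is a derivation of $\mathrm W_n$, because $\{e_{n-1},e_j\}=0$ for $j\ge2$. It is not in that span. For the grading $\deg e_i=i$ it has degree $n-2$, and the only inner derivation of that degree is $\{\cdot,e_{n-2}\}$, which sends $e_2$ to $c_{2,n-2}e_n\neq0$.
\end{itemize}
Now consider the algebra with $[e_i,x]=ie_i$, $\{e_1,x\}=e_{n-1}$ and $\{e_i,x\}=0$ for $i\ge2$. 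Since $E=[\cdot,e_{n-2}]$ in $\mathrm L_n$ and $h\in\textrm{Der}(\mathrm W_n)$, it satisfies all three compatibility conditions.

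For this algebra, $\{\cdot,x\}$ has no diagonal part. The degree-$(n-2)$ component of its class in $H^1(\mathrm W_n,\mathrm W_n)$ is nonzero. It is preserved, up to a nonzero scalar, by $x\mapsto x+u$ and by automorphisms of $\cal N$; note that $[e_1,x]=e_1$ forces the coefficient of $x$ to be $1$. In the printed normal form with $\gamma=0$, this component vanishes. So the normal form needs one more free parameter in degree $n-2$, and your argument, like the paper's, cannot yield the statement exactly as printed.
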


}

{ 
\begin{proof} By using the basis transformation in the proof of Theorem \ref{thm6.2}, we can write the product as
$$\left\{\begin{array}{lllll}
[e_1,x]=\alpha_{1,1}e_1+\alpha_{1,2}e_2,\\[3mm]
[e_i,x]=((i-2)\alpha_{1,1}+\alpha_{2,2})e_i+\sum\limits_{k=i+2}^{n}\alpha_{2,k-i+2}e_k,& 2\le i\le n.\\[3mm]
\{e_1,x\}=\gamma e_1+\sum\limits_{k=3}^{n-1}\gamma_{k-1} e_{k}+(\gamma_{n-1}+\tau_1)e_n,&\\[3mm]
\{e_i,x\}=i\gamma e_i+\sum\limits_{k=1}^{n-i}\gamma_k\{e_i,e_k\}+\sum\limits_{k=2}^{3}\tau_kt_k(e_i),& 2\le i\le n.&
\end{array}\right.$$

Then, the compatibility condition give us the following constraints.
\begin{center}
    \begin{tabular}{llllll}
        {Compatibility Rule} & & \quad &\quad  Constraints\\
        \hline \hline
\\ $L(e_1,e_2,x)=0,$ &\quad &\quad $\Rightarrow $\quad\quad & $ \gamma_k=0,\quad 2\le t\le n-3,$\\[3mm]      $L(e_2,e_3,x)=0,$ &\quad  &\quad$\Rightarrow $\quad\quad & $ {\alpha_{2,k}=0,\quad 4\le n-3,\quad \alpha_{2,2}=2\alpha_{1,1},}$\\[3mm] 
$L(e_1,e_3,x)=0,$ &\quad &\quad $\Rightarrow $\quad\quad & $\alpha_{1,2}=0.$ \\[3mm]        
    \end{tabular}
\end{center}
Obviously, we have $\alpha_{1,1}\neq0$ to ensure the non-nilpotency of the extension. Therefore, setting  $x'=\frac{x}{\alpha_{1,1}}$, we can assume $\alpha_{1,1}=1$ and the multiplication table of the statement is obtained. 
\end{proof}

Now, consider two-dimensional solvable extensions of $\mathrm{L}_n$ by means of non-nilpotent derivations of $\mathrm{L}_n$. Then applying derivation properties along with basis transformation used in the proof of Theorem \ref{N_1(2)} we obtain the products of the solvable algebra $\cal R$. The compatibility on the triple of elements $\{e_2,e_3,x_2\}$ gives us the condition   
$L(e_2,e_3,x_2)=-2e_5+\sum_{t=6}^n(*)e_t\neq0$. This contradiction implies the following result.
\begin{thm}\label{thm4.10} There is no $(n+2)$-dimensional solvable compatible Lie extension of $\cal N$ by means of non-nilpotent derivations of $\mathrm{L}_n$. 
\end{thm}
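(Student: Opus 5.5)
We argue by contradiction, mirroring the proof of Theorem~\ref{N_1(2)}. Suppose $\cal R$ is an $(n+2)$-dimensional solvable extension of $\cal N$ with basis $\{e_1,\dots,e_n,x_1,x_2\}$, where $ad^{(1)}_{x_1},ad^{(1)}_{x_2}$ are derivations of $\mathrm L_n$ whose semisimple parts span the two-dimensional maximal torus (Remark~\ref{rmderL}). Thus the determinant formed by the diagonal parameters $(\alpha_{1,1},\alpha_{2,2})$ and $(\beta_{1,1},\beta_{2,2})$ is nonzero.

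\emph{Normalization of the $\mathrm L_n$ part.} Taking the same linear combinations of $x_1,x_2$ as in the proof of Theorem~\ref{N_1(2)}, we normalize to
\[
[e_1,x_1]=e_1+\cdots,\quad [e_i,x_1]=(i-2)e_i+\cdots,\qquad [e_1,x_2]=\cdots,\quad [e_i,x_2]=e_i+\cdots .
\]
Adding to $x_1,x_2$ suitable elements of $\cal N$ then kills $\alpha_{2,3},\beta_{2,3}$ and all $\alpha_{1,t},\beta_{1,t}$ with $t\ge 3$. Next, the triangular change $e_i'=e_i+\sum_t A_te_{t+i-2}$ diagonalizes $ad^{(1)}_{x_1}$ on $e_2,\dots,e_n$. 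This change preserves $\mathrm L_n$, and we must check that it also preserves the shape of $\mathrm W_n$ modulo higher terms; this is harmless, since only leading terms are used below. Jacobi identities in $\mathrm L_n\rtimes\langle x_1,x_2\rangle$ reduce the table to the form obtained there: $[e_1,x_1]=e_1+\alpha e_2$, $[e_1,x_2]=-\alpha e_2$, $[e_i,x_1]=(i-2)e_i$, $[e_i,x_2]=e_i$.

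\emph{The $\mathrm W_n$ part.} The second bracket $\{-,x_2\}$ restricted to $\cal N$ is a derivation of $\mathrm W_n$. By the description of $H^1(\mathrm W_n,\mathrm W_n)$, it has the form
\[
\gamma h+\mathrm{ad}_{\mathrm W_n}(u)+\sum_k\tau_kt_k,
\]
with diagonal part $\gamma\, i\,e_i$ and an inner part $\sum_k \gamma_k\{-,e_k\}$.

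\emph{The contradiction.} We compute the mixed Jacobi identity $L(e_2,e_3,x_2)$, using $[e_2,e_3]=0$ in $\mathrm L_n$ and $\{e_2,e_3\}=c\,e_5$ with $c\neq0$ in $\mathrm W_n$. The surviving terms at $e_5$ are:
\begin{itemize}
\item $[\{e_2,e_3\},x_2]=c\,e_5$ coming from $ad^{(1)}_{x_2}$;
\item $\{[e_2,x_2],e_3\}+\{e_2,[e_3,x_2]\}=2c\,e_5$;
\item the terms from $\{-,x_2\}$ paired with $[-,-]$, which only involve $[e_i,e_j]$ for $i,j\ge2$ or brackets with $e_1$. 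Their $e_5$-coefficient can be controlled: the $\gamma h$ part contributes $0$, because $[e_2,e_3]=0$ and the $e_1$-components of $\{e_2,x_2\},\{e_3,x_2\}$ vanish.
\end{itemize}
With the appropriate signs the net $e_5$-coefficient is $\pm(\text{nonzero integer})$; the expected value is $-2$. Hence $L(e_2,e_3,x_2)=-2e_5+\sum_{t\ge6}(*)e_t\neq0$, contradicting Proposition~\ref{compatible}.

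\emph{Main obstacle.} The main obstacle is checking that the inner part $\gamma_1\{-,e_1\}$ of $\{-,x_2\}$ cannot cancel the $e_5$ term. Its contribution to $e_5$ involves $[\{e_2,e_1\},e_3]$-type terms, which vanish in $\mathrm L_n$ since $[e_i,e_j]=0$ for $i,j\ge 2$. It also involves $[e_2,\{e_3,x_2\}]$-type terms, which require an $e_1$-component. So we first verify carefully, using the explicit form of $\mathrm{Der}(\mathrm W_n)$, that no $e_1$-components appear in $\{e_i,x_2\}$ for $i\ge2$. If this verification fails, the fallback is to use $L(e_1,e_2,x_2)$ to force $\gamma_1=0$ first.
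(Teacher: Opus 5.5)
Your proposal is correct and follows the paper's own argument: normalize the $\mathrm L_n$-actions of $x_1,x_2$ as in the proof of Theorem~\ref{N_1(2)}, then show that $L(e_2,e_3,x_2)$ has a nonzero $e_5$-component. Two small remarks, neither of which affects the conclusion: with the signs in $L$ your own tally gives $c-2c=-c=-1$ (since $\{e_2,e_3\}=e_5$ in $\mathrm W_n$) rather than the quoted $-2$, but nonvanishing is guaranteed anyway because $\pm c\pm 2c\neq 0$; and your ``main obstacle'' is immediate, since every derivation of $\mathrm W_n$ preserves the characteristic subalgebra $\mathrm{span}\{e_2,\dots,e_n\}$ (the centralizer of $\mathrm{span}\{e_{n-1},e_n\}$), so $\{e_i,x_2\}$ has no $e_1$-component for $i\ge 2$.
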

}

\subsubsection{Solvable extensions of $\cal N$ via non-nilpotent derivations of $\mathrm W_n$.}

Now, we describe the one-dimensional solvable extensions arising from non-nilpotent derivations of $\mathrm W_n$. Since the maximal torus in $\textrm{Der}(\mathrm W_n)$ is one-dimensional, only one-dimensional solvable extensions appear.

\begin{thm}\label{thm4.13}
Let $\cal R$ be a one-dimensional solvable extension of $\cal N$ by means of non-nilpotent derivations of $\mathrm W_n$. Then there exists a basis $\{e_1,\dots,e_n,x\}$ such that the multiplication by $x$ is
\[
\left\{
\begin{array}{ll}
[e_1,x]=\alpha_{1,1} e_1+\alpha_{1,n-1} e_{n-1}+\alpha_{1,n}e_n,\\[2mm]
%[e_i,x]=i\,\alpha_{1,1}e_i+\displaystyle\sum_{k=i+1}^{n+i-2}\alpha_{2,k-i+2}\,e_k, & 2\le i\le n,\\[2mm]
{[e_i,x]=i\,\alpha_{1,1}e_i+\displaystyle\sum_{k=n-2}^{n}\alpha_{2,k}\,e_{k+i-2},} & 2\le i\le n,\\[2mm]

\{e_1,x\}=e_1,\qquad \{e_2,x\}=2e_2+\tau_2 e_{n-2}+\tau_3 e_{n-1},\\[2mm]
\{e_3,x\}=3e_3+\tau_2 e_{n-1}+\tau_3 e_n,\qquad \{e_4,x\}=4e_4+\tau_2 e_n,\\[2mm]
\{e_i,x\}=i e_i, & 5\le i\le n,
\end{array}
\right.
\]
for parameters $\alpha_{1,1},\alpha_{1,n-1},\alpha_{1,n},\alpha_{2,3},\ldots,\alpha_{2,n},\tau_2,\tau_3\in\mathbb{C}$. 
\end{thm}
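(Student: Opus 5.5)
We follow the same scheme as in the proofs of Theorem~\ref{thm6.4} and Theorem~\ref{thm4.12}, with the roles of the two brackets exchanged: here the non-nilpotent part sits on the $\mathrm W_n$-bracket $\{\,,\,\}$, and $\mathrm{ad}^{(1)}_x$ is only required to be a derivation of $\mathrm L_n$.

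\textbf{Step 1: normalize $\{\,\cdot,x\}$.} Write $\{\,\cdot,x\}$ using the decomposition $\mathrm{Der}(\mathrm W_n)=\mathrm{span}(h)\oplus\mathrm{span}(t_1,t_2,t_3)\oplus \mathrm{ad}(\mathrm W_n)$ from the Remark on $H^1(\mathrm W_n,\mathrm W_n)$. Then
$$\{e_i,x\}=\beta\, i e_i+\textstyle\sum_{k}\gamma_k\{e_i,e_k\}+\sum_{k=1}^3\tau_k t_k(e_i).$$
Non-nilpotency forces $\beta\neq0$. Replacing $x$ by $\frac1\beta\big(x-\sum_k\gamma_k e_k\big)$ removes the inner part and sets $\beta=1$, which leaves $\{e_i,x\}=ie_i+\sum_k\tau_kt_k(e_i)$. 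The $\tau_1$-term, $t_1(e_1)=e_n$, can also be absorbed: changing $x$ by a multiple of $e_{n-1}$ only affects $\{e_1,x\}$ via $\{e_1,e_{n-1}\}=e_n$. This gives exactly the stated $\{\,\cdot,x\}$, with only $\tau_2,\tau_3$ surviving. Note that subtracting $e_k$'s from $x$ also shifts $[\,\cdot,x]$ by $\mathrm{ad}^{(1)}$ of $\mathrm L_n$, but this remains a derivation of $\mathrm L_n$, so the general form is preserved.

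\textbf{Step 2: general form of $[\,\cdot,x]$.} By Remark~\ref{rmderL}, $[\,\cdot,x]$ is a general derivation of $\mathrm L_n$, with parameters $\alpha_{1,t}$ and $\alpha_{2,t}$.

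\textbf{Step 3: impose the mixed Jacobi identity.} We impose $L(e_i,e_j,x)=0$, i.e.\ condition (3) of the proposition on semidirect products, applied to pairs in increasing order:
\begin{itemize}
\item $L(e_2,e_3,x)$ compares diagonal weights and gives $\alpha_{2,2}=2\alpha_{1,1}$, so $[e_i,x]$ has diagonal $i\alpha_{1,1}$.
\item $L(e_1,e_2,x)$ and $L(e_1,e_3,x)$ kill $\alpha_{1,t}$ for $2\le t\le n-2$, exactly as in Theorem~\ref{thm6.4}, since $\mathrm{ad}_{e_1}$ and $\mathrm{ad}_{e_2}$ of $\mathrm W_n$ raise degree by $1$ and $2$.
\item $L(e_2,e_3,x)$, $L(e_2,e_4,x)$, $\dots$ successively force the low-degree tail coefficients $\alpha_{2,t}$ to vanish.
\end{itemize}
The mechanism is that the coefficients in \eqref{Wn} are not those of $\mathrm R_n$, so a derivation of $\mathrm L_n$ with a degree-raising tail fails to be compatible with $\Psi_W$ in low degrees. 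Only the top ones, $\alpha_{2,n-2},\alpha_{2,n-1},\alpha_{2,n}$, survive, because their contributions land outside the basis when paired with $\Psi_W$. We also check that $\tau_2,\tau_3$ impose no constraint: $t_2$ and $t_3$ map into $e_{n-2},e_{n-1},e_n$, where the relevant $\Psi_W$ and $\mathrm L_n$ products vanish or cancel.

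\textbf{Main obstacle.} The hardest step is the bookkeeping for $\alpha_{2,t}$ in Step 3: one has to show that each $L(e_2,e_j,x)$ produces a nonzero multiple of $\alpha_{2,t}$ in the lowest degree. This multiple is a difference of $\mathrm W_n$ structure constants, which we compute from \eqref{Wn}. We would proceed by induction on $t$, examining the coefficient of $e_{j+t}$ in $L(e_2,e_j,x)$. We stop at the threshold $t\ge n-2$, where $j+t>n$ for all admissible $j$ and no constraint arises. Finally, we note that no triangular change of basis in the $e_i$ is needed, unlike in Theorem~\ref{thm6.2}, and that the special nilradical is $\mathcal N$ because $\mathrm{ad}_x$ is non-nilpotent.
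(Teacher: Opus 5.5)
Your overall route matches the paper's. You bring $\{\cdot,x\}$ to $h+\tau_2t_2+\tau_3t_3$; your explicit removal of the inner part and of $t_1$, which is in fact $-\mathrm{ad}_{e_{n-1}}$, is more careful than the paper. You take a general derivation of $\mathrm L_n$ for $[\cdot,x]$ and impose $L(e_1,e_2,x)$, $L(e_1,e_3,x)$, $L(e_2,e_3,x)$. This correctly gives $\alpha_{2,2}=2\alpha_{1,1}$, $\alpha_{1,t}=0$ for $2\le t\le n-2$, and no condition on $\tau_2,\tau_3$.

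\textbf{The gap.} Your claim that every tail coefficient $\alpha_{2,t}$ with $t\le n-3$ is killed is false for $t=3$. Write $\{e_i,e_j\}=c_{ij}e_{i+j}$ for $i,j\ge 2$. The $\alpha_{2,t}$-contribution to the coefficient of $e_{i+j+t-2}$ in $L(e_i,e_j,x)$ is $\alpha_{2,t}(c_{ij}-c_{i+t-2,j}-c_{i,j+t-2})$. For $t=3$ this vanishes identically, because $c_{ij}=c_{i+1,j}+c_{i,j+1}$ is the Jacobi identity of $\mathrm W_n$ with $e_1$; for example $c_{23}-c_{33}-c_{24}=1-0-1=0$.

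Conceptually: $h,t_2,t_3$ are also derivations of $\mathrm L_n$, so condition (3) reduces to $\mathrm{ad}^{(1)}_x\in\mathrm{Der}(\mathrm W_n)$. The $\alpha_{2,3}$-part of $\mathrm{ad}^{(1)}_x$ is $\mathrm{ad}_{e_1}$, which is inner in both $\mathrm L_n$ and $\mathrm W_n$. So no identity $L(e_i,e_j,x)=0$ can force $\alpha_{2,3}=0$, and your induction on $t$ has no valid base case.

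For $4\le t\le n-3$ your plan works, and $L(e_2,e_3,x)$ alone suffices: the coefficient is $1-c_{t3}-c_{2,t+1}=\frac{(t-3)(t^2-3t+8)}{(t+1)t(t-1)}\neq 0$, and the factor $t-3$ exhibits exactly the exception. The paper's table also asserts $\alpha_{2,3}=0$ from $L(e_2,e_3,x)$, so here you share its gap rather than depart from it.

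\textbf{Consequences.} Your remark that no triangular change of basis is needed is wrong.

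If $\alpha_{1,1}\neq 1$, you can remove $\alpha_{2,3}$. Replacing $x$ by $x+ce_1$ subtracts $c\,\mathrm{ad}_{e_1}$ from both $\mathrm{ad}^{(1)}_x$ and $\mathrm{ad}^{(2)}_x$. Then pass to the basis $\exp(c\,\mathrm{ad}_{e_1})(e_i)$, which is an automorphism of $\mathcal N$. This restores $\{\cdot,x\}$, because $[h,\mathrm{ad}_{e_1}]=\mathrm{ad}_{e_1}$ and $\mathrm{ad}_{e_1}$ commutes with $t_2,t_3$. Meanwhile $\alpha_{2,3}$ becomes $\alpha_{2,3}+c(\alpha_{1,1}-1)$, at the cost of shifting $\alpha_{1,n}$. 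Take $c=\alpha_{2,3}/(1-\alpha_{1,1})$.

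If $\alpha_{1,1}=1$, this is impossible. Consider the $h$-degree-one part of $\mathrm{ad}^{(1)}_x-\mathrm{ad}^{(2)}_x$. It is unchanged by $x\mapsto x+y$, since the difference changes by $\pm\mathrm{ad}^{\Psi_W}_y$ and $\Psi_W(e_1,\cdot)=0$. It is only conjugated by the diagonal graded part of an automorphism of $\mathcal N$, since automorphisms preserve the flag $\mathrm{span}\{e_k,\dots,e_n\}$. So it stays nonzero under all admissible changes. In the displayed normal form with $n\ge 6$, however, this degree-one part is zero. Hence for $n\ge 6$ the extension $[e_1,x]=e_1$, $[e_i,x]=ie_i+e_{i+1}$ $(i\ge 2)$, $\{e_i,x\}=ie_i$ is compatible but not of the stated form. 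A correct normal form must keep the term $\alpha_{2,3}e_{i+1}$, as the statement's parameter list, which contains $\alpha_{2,3}$, already suggests.
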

\begin{proof} Using the descripcion of the derivations, we obtain the products of $\cal R$.
$$\left\{\begin{array}{llll}
[e_1,x ]=\alpha_{1,1} e_1+\sum\limits_{t=2}^n\alpha_{1,t} e_t,& \{e_1,x\}=e_1,\\[3mm]
[e_i,x ]=((i-2)\alpha_{1,1} +\alpha_{2,2} )e_i+\sum\limits_{k=i+1}^{n}\alpha_{2,k-i+2} e_k,&  2\le i\le n,\\[3mm]
\{e_2,x\}=2e_2+\tau_2e_{n-2}+\tau_3e_{n-1},&\{e_4,x\}=4e_4+\tau_2e_n, \\[3mm]
\{e_3,x\}=3e_3+\tau_2e_{n-1}+\tau_3e_n,&\{e_i,x\}=ie_i,\quad 5\le i\le n.\end{array}\right.$$

Then the result follows by finding the constraints obtained from the compatibility rule. 
\begin{center}
    \begin{tabular}{llllll}
        {Compatibility Rule} & & \quad &\quad  Constraints\\
        \hline \hline
\\
        $L(e_1,e_2,x)=0,$ &\quad &\quad $\Rightarrow $\quad\quad & $\alpha_{1,t}=0,\quad 2\le t\le n-2,$\\[3mm]
        $L(e_2,e_3,x)=0,$ &\quad  &\quad$\Rightarrow $\quad\quad & ${\alpha_{2,t}=0,\quad 3\le t\le n-3},\quad \alpha_{2,2}=2\alpha_{1,1}.$\\[3mm] 
    \end{tabular}
\end{center}
\end{proof}

\subsubsection{Solvable extensions of $\cal N$ via non-nilpotent derivations of $\cal N$.}

Lastly, we study one-dimensional solvable extensions of $\cal N$ obtained from non-nilpotent derivations of $\cal N$ itself. 

\begin{thm} Let $\cal R=\cal N\oplus \textrm{span}\{x\}$ be a one-dimensional solvable extension of $\cal N$ by means of non-nilpotent derivations of $\textrm{Der}(\mathrm{L}_n)\cap \textrm{Der}(\mathrm{W}_n)$. Then in addition to the products of $\cal N$ the algebra $\cal R$ admits one of the following multiplications table
%two Lie components $(\cal R_1, [-,-])$ and $(\cal R_2, \{-,-\})$ with 
$$\left\{\begin{array}{lllllll}
[e_1,x]= e_1,&
\{e_1,x\}=\gamma e_1+\gamma_1e_{n-1}+\gamma_2 e_n,\\[3mm]
[e_2,x]=2 e_2+\alpha_3e_{n-2}+\alpha_4e_{n-1},& 
\{e_2,x\}=2\gamma e_2+\gamma_3e_{n-2}+\gamma_4e_{n-1}+\frac{(n-4)\gamma_1}{(n-3)(n-2)}e_{n},\\[3mm]
[e_3,x]=3e_3+\alpha_3e_{n-1}+\alpha_4e_{n},& 
\{e_3,x\}=3\gamma e_3+\gamma_3e_{n-1}+\gamma_4e_{n},\\[3mm]
[e_4,x]=4 e_4+\alpha_3e_{n},& 
\{e_4,x\}=4\gamma e_4+\gamma_3e_n,&\\[3mm]
[e_i,x]=i e_i,&
\{e_i,x\}=i\gamma e_i,\quad 5\le i\le n,
\end{array}\right.$$
for $\alpha_{3},\alpha_{4},\gamma,\gamma_1,\gamma_2,\gamma_3,\gamma_4\in\mathbb{C}$, and where the products $[e_i,x]$ 
can be interchanged with the products $\{e_i,x\}$. 
\end{thm}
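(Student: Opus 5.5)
We follow the scheme of the analogous result for the pair $(\mathrm L_n,\mathrm R_n)$: first describe $\textrm{Der}(\mathrm L_n)\cap\textrm{Der}(\mathrm W_n)$, then write the two operators $ad^{(1)}_x$ and $ad^{(2)}_x$ as two such derivations, and finally normalize by changes of basis.

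\emph{Step 1: the common derivations.} We intersect the description of $\textrm{Der}(\mathrm L_n)$ in Remark~\ref{rmderL} with the description of $\textrm{Der}(\mathrm W_n)$. The latter is the inner derivations plus the span of $h,t_1,t_2,t_3$.

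\begin{itemize}
\item Imposing $d[e_1,e_i]=[d e_1,e_i]+[e_1,d e_i]$ for the $\mathrm W_n$-bracket with $i,j\ge2$ forces $\alpha_{2,2}=2\alpha_{1,1}$. So the diagonal part is a multiple of $h$.
\item Testing on $\{e_2,e_3\}$ and $\{e_2,e_4\}$ kills the superdiagonal terms $\alpha_{2,t}$ for $3\le t\le n-3$. The only survivors are the tail parameters matching $t_2$ and $t_3$, which produce the coefficients $\alpha_3,\alpha_4$ on $e_2,e_3,e_4$.
\item On $e_1$, only $e_1$ itself, $e_{n-1}$ and $e_n$ may appear.
\item A further check, using $\{e_1,e_{n-2}\}$ and $\{e_2,e_{n-2}\}$, gives the component $\frac{(n-4)\gamma_1}{(n-3)(n-2)}e_n$ of $d(e_2)$ forced by $d(e_1)\ni\gamma_1 e_{n-1}$. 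This comes from the structure constant $\frac{6(j-i)(i-2)!(j-2)!}{(i+j-2)!}$ with $i=2$, $j=n-2$.
\end{itemize}

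The result is a derivation space with a one-dimensional torus spanned by $h$, plus nilpotent parameters supported in degrees $\ge n-4$.

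\emph{Step 2: applying it to $x$.} Since both $ad^{(1)}_x$ and $ad^{(2)}_x$ lie in this common space, conditions (1)–(3) of the semidirect-product proposition hold automatically. Hence no compatibility constraints arise beyond Step 1. We write
\[
ad^{(1)}_x=\alpha h+N_1,\qquad ad^{(2)}_x=\gamma h+N_2,
\]
with $N_1,N_2$ nilpotent. Non-nilpotency of the extension gives $(\alpha,\gamma)\neq(0,0)$. By the symmetry between the roles of the two brackets (the last sentence of the statement), we may assume $\alpha\neq0$ and rescale $x$ so that $\alpha=1$.

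\emph{Step 3: normalization.} We replace $x$ by $x+ae_{n-2}+be_{n-1}$. Inner derivations of $\mathrm L_n$ coming from $e_{n-2},e_{n-1}$ act only on $e_1$ (landing in $e_{n-1},e_n$), and their $\mathrm W_n$-counterparts act with $\{e_i,e_{n-2}\}$. This removes the $e_{n-1},e_n$ tail of $[e_1,x]$ while altering the corresponding terms of $\{e_i,x\}$; those altered terms are absorbed into the renamed parameters $\gamma_1,\gamma_2$. Then a triangular change $e_i'=e_i+\sum_t A_te_{t+i-2}$, as in Theorem~\ref{thm6.2}, would remove any remaining inner-type tails in $[e_i,x]$. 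Here, however, Step 1 already leaves only the outer $t_2,t_3$ tails, which cannot be removed since they are nontrivial in $H^1$; these are $\alpha_3,\alpha_4$. The same substitution transforms $\{e_i,x\}$ into the displayed form, with free parameters $\gamma,\gamma_1,\dots,\gamma_4$.

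\emph{Main obstacle.} The hard part is the exact computation of $\textrm{Der}(\mathrm L_n)\cap\textrm{Der}(\mathrm W_n)$ in Step 1, in particular pinning down the coefficient $\frac{(n-4)\gamma_1}{(n-3)(n-2)}$. That coefficient comes from the derivation identity on the pairs $(e_2,e_{n-2})$ and $(e_1,e_{n-2})$ with the factorial structure constants of $\mathrm W_n$. We would compute it by evaluating the identity on low-index pairs, and we would check that the normalization of Step 3 does not reintroduce terms.
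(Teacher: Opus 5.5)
Your plan follows the paper's proof. The paper writes down a table for $\textrm{Der}(\mathrm L_n)\cap\textrm{Der}(\mathrm W_n)$, notes that compatibility is then automatic, splits into $\alpha\neq0$ or $\gamma\neq0$, and shifts $x$ by multiples of $e_{n-2},e_{n-1}$. The gap is in Step 1, which you rightly call the crux and which the paper also only asserts: computed honestly, it does not give what you claim.

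\textbf{Step 1 is wrong in two places.}
\begin{enumerate}
\item The test on $\{e_2,e_3\}$ does not kill $\alpha_{2,3}$. Write $c_{i,j}=\frac{6(j-i)(i-2)!(j-2)!}{(i+j-2)!}$. The condition on the $e_{i+1}$-shift is $\alpha_{2,3}(c_{2,3}-c_{3,3}-c_{2,4})=\alpha_{2,3}(1-0-1)=0$, which is just Jacobi. Indeed the map $e_1\mapsto0$, $e_i\mapsto e_{i+1}$ is $\mathrm{ad}\,e_1$ for \emph{both} brackets. Only $4\le t\le n-3$ are killed.
\item Nothing ties the $e_n$-coefficient of $d(e_2)$ to the $e_{n-1}$-coefficient of $d(e_1)$. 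The maps $e_1\mapsto e_{n-1}$ and $e_2\mapsto e_n$ (zero on the other basis vectors) are each derivations of both $\mathrm L_n$ and $\mathrm W_n$. They vanish on $\langle e_3,\dots,e_n\rangle$, $e_n$ is central, and $\{e_{n-1},e_j\}=0$ for $j\ge2$. So the identities on $(e_1,e_{n-2})$ and $(e_2,e_{n-2})$ impose no relation.
\end{enumerate}
The constant $\{e_2,e_{n-2}\}=\frac{6(n-4)}{(n-2)(n-3)}e_n$ only describes the single inner derivation $\mathrm{ad}\,e_{n-2}$, and it is a factor $6$ off the ratio in the statement. For $n\ge6$ the common derivations are spanned by $h$, $\mathrm{ad}\,e_1$, $t_2$, $t_3$, $e_1\mapsto e_{n-1}$, $e_1\mapsto e_n$ and $e_2\mapsto e_n$.

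\textbf{Step 3 cannot reach the displayed table.}
\begin{itemize}
\item The shift $x\mapsto x+ae_{n-2}+be_{n-1}$ leaves $[e_2,x]$ untouched, because $[e_2,e_{n-2}]=[e_2,e_{n-1}]=0$ in $\mathrm L_n$. So the free $e_n$-term of $[e_2,x]$ survives.
\item The same shift moves the $e_{n-1}$-coefficient of $\{e_1,x\}$ by $a$, but the $e_n$-coefficient of $\{e_2,x\}$ by $a\frac{6(n-4)}{(n-2)(n-3)}$. So even your version of the tie is not preserved when $\gamma_1$ is renamed.
\item The missing tool is conjugation by $\exp(sD)$, with $D$ a nilpotent common derivation of degree $k$. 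Since $[h,D]=kD$, this moves the $D$-components of $[\cdot,x]$ and $\{\cdot,x\}$ by $ks$ and $\gamma ks$, modulo higher-degree terms. In particular $\alpha_3,\alpha_4$ can always be conjugated away, so nontriviality in $H^1$ is not the relevant criterion.
\item For the $\mathrm{ad}\,e_1$-components, $x\mapsto x+ce_1$ moves both by the same amount, while conjugation moves them by $s$ and $\gamma s$. Both can be removed only if $\gamma\ne1$.
\end{itemize}

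\textbf{When $\gamma=1$ the displayed list is incomplete.} Take $[e_i,x]=ie_i$ for $1\le i\le n$, $\{e_1,x\}=e_1$, $\{e_i,x\}=ie_i+e_{i+1}$ for $2\le i\le n-1$, and $\{e_n,x\}=ne_n$. This is an extension by non-nilpotent common derivations. For $n\ge6$ it is isomorphic to nothing in the displayed list, with or without interchanging the brackets:
\begin{itemize}
\item An isomorphism preserves $\mathcal N$ and its characteristic filtration $F_k=\langle e_k,\dots,e_n\rangle$, and sends $x\mapsto\mu x+n_0$.
\item The map $\{\cdot,n_0\}-[\cdot,n_0]$ raises the filtration by at least $2$.
\item Hence whether $\{\cdot,x\}-[\cdot,x]$ has filtration degree exactly $1$ on $\mathcal N$ is an isomorphism invariant.
\item It does here. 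On the displayed list that degree is $0$ if $\gamma\ne1$, and at least $n-4\ge2$ if $\gamma=1$.
\end{itemize}

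The missing ingredient is an honest computation of the common derivations, followed by normalization through automorphisms of $\mathcal N$. Its outcome differs from the displayed table: at least a $\gamma=1$ family with $e_{i+1}$-terms must be added. Neither your sketch nor the paper's proof, which rests on the same asserted table, establishes the statement as written.
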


\begin{proof} The description of $\textrm{Der}(\mathrm{L}_n)\cap \textrm{Der}(\mathrm{W}_n)$ leads to the following products for $\mathcal{R}$
$$\left\{\begin{array}{lllllll}
[e_1,x]=\alpha e_1+\alpha_{1}e_{n-1}+\alpha_2e_n,
&\{e_1,x\}=\gamma e_1+\gamma_1e_{n-1}+\gamma_2 e_n,\\[3mm]
[e_2,x]=2\alpha e_i+\alpha_3e_{n-2}+\alpha_4e_{n-1}+\frac{(n-4)\alpha_1}{(n-3)(n-2)}e_{n},&\{e_2,x\}=2\gamma e_2+\gamma_3e_{n-2}+\gamma_4e_{n-1}+\frac{(n-4)\gamma_1}{(n-3)(n-2)}e_n, \\[3mm]
[e_3,x]=3\alpha e_3+\alpha_3e_{n-1}+\alpha_4e_{n},&\{e_3,x\}=3\gamma e_3+\gamma_3e_{n-1}+\gamma_4e_{n},\\[3mm]
[e_4,x]=4\alpha e_4+\alpha_3e_{n},& \{e_4,x\}=4\gamma e_4+\gamma_3e_n,\\[3mm]
[e_i,x]=i\alpha e_i,&\{e_i,x\}=i\gamma e_i,\quad  5\le i\le n.
\end{array}\right.$$

It can be checked that any such product is compatible. The solvability of $\cal R$ implies that $(\alpha,\gamma)\neq(0,0)$.  Then there are two possibilities. On the one hand, if $\alpha\neq0$, then we can choose
$x'=\frac{1}{\alpha}\big(x-\alpha_{1}e_{n-2}-\alpha_{2}e_{n-1}\big)$ and the multiplication table follows accordingly. On the other hand, if $\gamma\neq0$, then setting 
$x'=\frac{1}{\gamma}\big(x-\gamma_1e_{n-2}-\gamma_2 e_{n-1}\big)$ leads to the corresponding multiplication table.
\end{proof}

\section{Filiform compatible Lie algebras}

Among nilpotent algebras, the filiform class is arguably the most significant, characterized by having the maximal possible nilpotency index. It is evident that the nulfiliform case does not produce any non-abelian compatible Lie algebra, since any generating set of a compatible Lie algebra must contain at least two elements. Consequently, we focus our attention directly on the filiform case.

Denote by $P$ the adjoint operator of $(\mathcal{L}, [-, -])$ and by $Q$ the adjoint operator of $(\mathcal{L}, \left\{-, -\right\})$. Throughout, we assume the base field is $\mathbb{C}$, although, some arguments are also valid over any infinite field.

\begin{defn}
    Let $\mathcal{L}$ be a nilpotent compatible Lie algebra of dimension $n+1$. We say $\mathcal{L}$ is filiform if we have $\textrm{dim}\,  \mathcal{L}^i = n+1-i$ for any $1 \leq i\leq n+1$.
\end{defn}

Let $\mathcal{L}$ be a filiform compatible Lie algebra. In the next result, we construct a basis of $\mathcal{L}$ that satisfies a set of desirable properties. This basis is a reminiscent of the adapted basis used in the study of filiform Lie algebras.

\begin{lem}\label{lemmafili1}
    Let $\mathcal{L}$ be a filiform compatible Lie algebra of dimension $n+1$. 
    Then there is a basis $\left\{e_0, \ldots, e_{n}\right\}$ of $\mathcal{L}$  and natural numbers $(n_1, \ldots, n_t)$, with  $\underline{n_{j}} = \sum_{i=1}^{j} n_i$ and $n= \underline{n_t}$, such that 
        \begin{align*}
            &[e_0, e_i] = e_{i+1}, & \textrm{for $1\leq i\leq \underline{n_1}$,} \\
            &[M, e_i] = 0, \quad \quad \quad \quad  \quad \quad \left\{e_0, e_i\right\} = e_{i+1}, & \textrm{for $\underline{n_1} < i\leq \underline{n_2}$,} \\
            &[e_0, e_i] = e_{i+1}, & \textrm{for $\underline{n_2}< i\leq \underline{n_3}$,}\\
            & \ldots & \ldots\\
            &[e_0, e_i] = e_{i+1}, & \textrm{for $\underline{n_{t-1}} < i\leq \underline{n_t}$ (if $t$ is odd),} \\
            &[M, e_i] = 0, \quad \quad \quad \quad  \quad \quad \left\{e_0, e_i\right\} = e_{i+1}, & \textrm{for $\underline{n_{t-1}} < i\leq \underline{n_t}$ (if $t$ is even),} 
        \end{align*}
        where $M= \textrm{span}(e_0, e_{1})$ is complementary to $\mathcal{L}^2$ in $\mathcal{L}$, $e_i \in \mathcal{L}^i$ for $i>1$.
\end{lem}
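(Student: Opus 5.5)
Here $\mathcal L^i$ denotes the descending central series ${\cal C}^{i-1}(\mathcal L)$, so $\mathcal L^1=\mathcal L$, $\mathcal L^{i+1}=[\mathcal L,\mathcal L^i]_1+\{\mathcal L,\mathcal L^i\}_2$. Filiformity means $\dim \mathcal L^i/\mathcal L^{i+1}=1$ for $2\le i\le n$ and $\dim\mathcal L/\mathcal L^2=2$. The plan is to build an adapted basis step by step down the filtration. The key fact is that $\mathcal L^{i+1}=[M,\mathcal L^i]+\{M,\mathcal L^i\}+\mathcal L^{i+2}$ for a complement $M$ of $\mathcal L^2$. This follows from the inclusions $[\mathcal L^j,\mathcal L^k]\subseteq\mathcal L^{j+k}$ recalled in the preliminaries: the part of $\mathcal L$ lying in $\mathcal L^2$ contributes only to $\mathcal L^{i+2}$. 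Consequently, for each $i\ge2$ the map $M\otimes(\mathcal L^i/\mathcal L^{i+1})\to \mathcal L^{i+1}/\mathcal L^{i+2}$ induced by $P$ and $Q$ is surjective whenever $\mathcal L^{i+1}\neq0$.

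First I choose $e_0$. For each $i$ with $2\le i\le n$, pick $f_i\in\mathcal L^i\setminus\mathcal L^{i+1}$. The set of $m\in M$ for which $\mathrm{ad}^{(1)}_m f_i\notin \mathcal L^{i+2}$ is either empty or a nonempty Zariski-open subset of $M$, and the same holds for $\mathrm{ad}^{(2)}_m f_i$. For each level $i$ at least one of the two sets is nonempty, by the surjectivity above. A finite intersection of nonempty Zariski-open sets over an infinite field is nonempty, so I can choose $e_0\in M$ lying in all the nonempty ones. Then for every level $i\ge2$, at least one of $[e_0,\cdot]$ and $\{e_0,\cdot\}$ maps a generator of $\mathcal L^i/\mathcal L^{i+1}$ onto a generator of $\mathcal L^{i+1}/\mathcal L^{i+2}$. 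I then take $e_1\in M$ with $M=\mathrm{span}(e_0,e_1)$, choosing it so that it also generates the step from $\mathcal L$ to $\mathcal L^2$ through $e_0$ (for instance $e_2:=[e_0,e_1]$ or $\{e_0,e_1\}$ nonzero modulo $\mathcal L^3$). That is possible since $\mathcal L^2/\mathcal L^3$ is spanned by the images of $[e_0,e_1]$ and $\{e_0,e_1\}$.

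Next I define $e_{i+1}$ recursively. If $[e_0,e_i]\notin\mathcal L^{i+2}$, I set $e_{i+1}=[e_0,e_i]$; otherwise I set $e_{i+1}=\{e_0,e_i\}$, which then lies outside $\mathcal L^{i+2}$. Grouping consecutive indices according to which bracket was used gives the blocks $(n_1,\dots,n_t)$, alternating between type $[\,\cdot\,]$ and type $\{\,\cdot\,\}$. The first block is of bracket type after possibly swapping the roles or allowing $n_1=0$. These elements form a basis with $e_i\in\mathcal L^i$, because each $e_i$ generates $\mathcal L^i$ modulo $\mathcal L^{i+1}$.

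The main obstacle is the extra requirement $[M,e_i]=0$ exactly in the $\{\,\cdot\,\}$-blocks; choosing the bracket that is nonzero only gives $[e_0,e_i]\in\mathcal L^{i+2}$, not $[M,e_i]=0$. To handle it, I would first use the genericity of $e_0$ to show that in such a block $[M,e_i]\subseteq\mathcal L^{i+2}$ for all of $M$, not just for $e_0$. The argument is that if $[e_1,e_i]$ were nonzero modulo $\mathcal L^{i+2}$, then a generic choice of $e_0$ in the open set would have made the $[\,\cdot\,]$-option available. Then I would try to correct the basis by a triangular change $e_{i}\mapsto e_i+\sum_{k>i}c_ke_k$, working from the top of the filtration downward, to kill the higher-order components of $[M,e_i]$. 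If this correction fails in general, the fallback is to read $[M,e_i]=0$ as holding modulo the appropriate graded piece, that is, in the associated graded compatible algebra. I would check which reading the subsequent construction of the model family $\mathcal L_s$ actually uses.
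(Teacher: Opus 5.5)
\paragraph{Verdict.} Your genericity construction is sound, but the step you flag as the main obstacle is a genuine gap, and it cannot be closed: the lemma with $[M,e_i]=0$ read literally is false.

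\paragraph{What your argument proves.} It gives a filtration-adapted basis with $e_{i+1}\in\{[e_0,e_i],\{e_0,e_i\}\}$, and $[M,e_i]\subseteq\mathcal L^{i+2}$ at every $\{\,,\,\}$-step.

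\paragraph{Why neither repair works.} A triangular change of $e_2,\dots,e_n$ is not available. Once $e_0,e_1$ and the block pattern are fixed, every $e_{i+1}$ is forced to be $[e_0,e_i]$ or $\{e_0,e_i\}$, so the only freedom is the choice of $e_0,e_1$. No such choice works in general, because exact vanishing does not follow from filiformity. On $\mathrm{span}(e_0,\dots,e_5)$ put
\[
[e_0,e_1]=e_2,\ [e_1,e_2]=e_4,\ [e_0,e_3]=e_4,\ [e_1,e_3]=e_5;\qquad \{e_0,e_1\}=e_2,\ \{e_0,e_2\}=e_3,\ \{e_1,e_2\}=e_4,\ \{e_0,e_4\}=e_5,\ \{e_1,e_3\}=e_5 .
\]
The only nonzero terms in the two Jacobi identities and in the mixed Jacobi identity occur on the triple $(e_0,e_1,e_2)$, and there they cancel: $\{\{e_1,e_2\},e_0\}+\{\{e_2,e_0\},e_1\}=-e_5+e_5$ and $\{[e_1,e_2],e_0\}+[\{e_2,e_0\},e_1]=-e_5+e_5$. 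Moreover $\mathcal L^k=\mathrm{span}(e_k,\dots,e_5)$ for $k\ge 2$, so this is a filiform compatible Lie algebra.

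Write $e_0'=\alpha e_0+\beta e_1+u$ and $e_1'=\gamma e_0+\delta e_1+v$ with $u,v\in\mathcal L^2$ and $D=\alpha\delta-\beta\gamma\neq 0$.
\begin{enumerate}
\item Both $[e_0',e_1']$ and $\{e_0',e_1'\}$ are $\equiv De_2$ modulo $\mathcal L^3$. So the first step cannot be a $\{\,,\,\}$-step, since that would require $[e_0',e_1']=0$. Hence $e_2'=[e_0',e_1']=De_2+(\ast)e_4+(\ast)e_5$.
\item Then $[e_0',e_2']=\beta De_4\in\mathcal L^4$, so it cannot be $e_3'$.
\item Also $[e_1',e_2']=\delta De_4$, so $[M,e_2']=0$ would force $\beta=\delta=0$, contradicting $D\neq 0$.
\end{enumerate}
Swapping the roles of the two brackets only delays the failure by one level. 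One is forced to $e_3'=\{e_0',e_2'\}=\alpha De_3+z_4e_4+z_5e_5$ with $\alpha\neq 0$. Since $\{\mathcal L,\mathcal L^3\}\subseteq\mathcal L^5$, the next step must switch brackets. But $\{e_0',e_3'\}=\{e_1',e_3'\}=0$ gives $z_4=-\beta D$ and then $D^2=0$.

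\paragraph{The paper's proof.} It does not supply the missing idea either. It obtains $[M,e_{n_1+1}]=0$ by taking $n_1$ maximal with $P_M^{n_1+1}(M)=0$. However, it re-chooses $e_0$ midway and never checks that its vectors stay in $\mathcal L^i\setminus\mathcal L^{i+1}$. In the example above it gives $n_1=2$ and $e_3=[e_0,e_2]\in\mathcal L^4$, so its vectors are not an adapted basis.

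\paragraph{What you should prove instead.} Your argument completely proves the graded statement: in the $\{\,,\,\}$-blocks, $[M,e_i]\subseteq\mathcal L^{i+2}$, that is, $[M,e_i]=0$ in $\mathrm{gr}\,\mathcal L$. This is the form that passes to the graded setting of Proposition~\ref{grL}. State and prove that version, and drop the attempt to force exact vanishing.
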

\begin{proof}   
    Construct the basis $\{e_0, \ldots, e_n\}$ as follows. Let $M$ be a complementary space to $\mathcal{L}^2$ in $\mathcal{L}$. Then there is some minimal $k \geq 0$ such that $P_M^k(M) = 0$. Set $n_1 = k - 1$. Assume $n_1 \neq 0$, otherwise swap the brackets.   
    There are $e_0, e_1 \in M$ such that $P_{e_0}^{n_1}(e_1)\neq 0$, so set $P_{e_0}^{i}(e_1) = e_{i+1}$ for $1\leq i\leq n_1$. Moreover,  then $[M, e_{n_1+1}] = 0$.

    Next, if $n_1= n$, then the stated result for $t=1$ is obtained. If $n_1<n$, it follows $Q_{M}(e_{n_1+1})\neq 0$, since $\mathcal{L}$ is filiform. 
   If there exists some minimal $k \geq 1$ such that $P_M Q_M^{k}(e_{n_1+1})\neq 0$ and there is no loss of generality in assuming that $P_{e_0} Q_{e_0}^{k}(e_{n_1+1})\neq 0$ for certain $e_0\in M$. Choose $n_2 = k$ and for $1\leq i\leq n_2$ set $Q_{e_0}^{i}(e_{n_1+1}) = e_{{n_1+1}+i}$. If $P_M Q_M^{k}(e_{n_1+1})=0$ for all $k\in \mathbb{N}$, then $t=2$, we have $ Q_M^{k}(e_{n_1+1}) \neq 0$ for $k \leq n - n_1$ and the basis is clear, because $\mathcal{L}$ is filiform.

   Now, the result follows by induction. In short, suppose the argument above was applied $m$ times. Then there is an element $e_{l}$ such that $P_{M}(e_{l}) \neq 0$. Clearly, there is some minimal $k \geq 0$ such that $P_M^k(e_l) = 0$ by the nilpotency, and we can proceed as in the first paragraph. If after that, the basis is complete, we are done. Otherwise, proceed as in the second paragraph.    Note that this is indeed a basis because of the nilpotency of the compatible algebra.
\end{proof}

Observe that in the previous result, the integers $n_{2i}$ and $t$ are chosen to be minimal, while the values of  $n_{2i-1}$ are taken to be maximal. Although this selection is arbitrary, we adopt it as a convention.

\begin{exa}
Fix a series of natural numbers $s = (n_1, \ldots, n_t)$. The model filiform compatible Lie algebra $(\mathcal{L}_s, [-, -], \left\{-, -\right\})$ associated to the series $s$ is defined for a basis $e_0, \ldots e_n$, where $\underline{n_{j}} = \sum_{i=1}^{j} n_i$ and $n= \underline{n_t}$, by the following products (and with all the other products equal to zero)
        \begin{align*}
            &[e_0, e_i] = e_{i+1}, & \textrm{for $1\leq i\leq \underline{n_1}$,} \\
            &\! \left\{e_0, e_i\right\} = e_{i+1}, & \textrm{for $\underline{n_1} < i\leq \underline{n_2}$,} \\
            &[e_0, e_i] = e_{i+1}, & \textrm{for $\underline{n_2}< i\leq \underline{n_3}$,}\\
            & \ldots & \ldots\\
            &[e_0, e_i] = e_{i+1}, & \textrm{for $\underline{n_{t-1}} < i\leq \underline{n_t}$ (if $t$ is odd),} \\
            &\!\left\{e_0, e_i\right\} = e_{i+1}, & \textrm{for $\underline{n_{t-1}} < i\leq \underline{n_t}$ (if $t$ is even),} 
        \end{align*}

Verifying that the algebra $\mathcal{L}_s$ is indeed a filiform compatible Lie algebra is straightforward. Clearly, if $s= (n)$ or $s = (0, n)$ we obtain the model filiform Lie algebra. Notice that the numbers $ n_1, \ldots, n_k $ correspond to the steps where the bracket switches between the two operations.
\end{exa}

The cohomology for compatible Lie algebras is defined analogously to the Lie algebra cohomology for a pair of cocycles $(\varphi, \psi)$, taking into account the compatibility. We briefly recall the definition of the second cohomology space with respect to the adjoint representation for convenience. 
Let $(\mathcal{L}, [-, -], \left\{-, -\right\})$ be a compatible Lie algebra. Denote $\mathcal{L}_1 = (\mathcal{L}, [-, -])$ and $\mathcal{L}_2 = (\mathcal{L}, \{-, -\})$. The space of $2$-cocycles of $\mathcal{L}$ is
$$Z^2_{CL}(\mathcal{L}, \mathcal{L}) \subseteq Z^2_{Lie}(\mathcal{L}_1, \mathcal{L}_1) \times  Z^2_{Lie}(\mathcal{L}_2, \mathcal{L}_2)$$
with the additional condition, for $x, y, z \in \mathcal{L}$ and $(\varphi, \psi)\in Z^2_{CL}(\mathcal{L}, \mathcal{L})$, that follows from the compatibility 
{\begin{equation}\label{compcoho}
    \begin{aligned}
&\varphi(x, \{y, z\}) + \varphi(y, \{z, x\}) + \varphi(z, \{x, y\}) + \{x, \varphi(y, z)\} + \{y, \varphi(z, x)\} + \{z, \varphi(x, y)\}  \\
 & \quad \quad  + \psi(x, [y, z]) + \psi(y, [z, x]) + \psi(z, [x, y]) + [x, \psi(y, z)] + [y, \psi(z, x)] + [z, \psi(x, y)]  = 0.
\end{aligned}
\end{equation}}
The next result is obtained easily by an algebraic verification.

\begin{lem}\label{lemco1}
    Let $(\mathcal{L}, \alpha, \beta)$ be a compatible Lie algebra. Then
    \medskip  \begin{enumerate}
        \item $(\lambda_1 \alpha + \lambda_2 \beta, \mu_1 \alpha + \mu_2 \beta)\in Z^2_{CL}(\mathcal{L}, \mathcal{L})$  for all $\lambda_i, \mu_j \in \mathbb{C}$.
        \medskip 
        \item $(\mathcal{L}, \lambda_1 \alpha + \lambda_2 \beta, \mu_1 \alpha + \mu_2 \beta)$ is a compatible Lie algebra  for all  $\lambda_i, \mu_j \in \mathbb{C}$.
    \end{enumerate}
\end{lem}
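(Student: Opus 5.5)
The plan is to derive part (2) from Proposition~\ref{compatible} and then obtain part (1) from part (2) by differentiation, using that the pair condition \eqref{compcoho} is the linearization of the mixed Jacobi identity.

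\textbf{Part (2).} By Proposition~\ref{compatible}, every bracket $\lambda\alpha+\mu\beta$ is a Lie bracket. In particular $\alpha':=\lambda_1\alpha+\lambda_2\beta$ and $\beta':=\mu_1\alpha+\mu_2\beta$ are Lie brackets. Their sum $(\lambda_1+\mu_1)\alpha+(\lambda_2+\mu_2)\beta$ is again of this form, so it is also a Lie bracket. Condition (1) of Proposition~\ref{compatible} then holds for the pair $(\alpha',\beta')$, so $(\mathcal L,\alpha',\beta')$ is a compatible Lie algebra.

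\textbf{Part (1), Lie cocycle conditions.} Write $J(\mu)$ for the Jacobiator of a bracket $\mu$, and $J(\mu,\nu)$ for the symmetric bilinear form given by the mixed expression $L$ of Proposition~\ref{compatible}. Then $J(\mu+\nu)=J(\mu)+J(\nu)+J(\mu,\nu)$. For a Lie bracket $\mu$, the statement ``$\varphi\in Z^2_{Lie}(\mu,\mu)$'' is exactly $J(\mu,\varphi)=0$, where $J(\mu,\varphi)$ is the first-order term of $J(\mu+t\varphi)$.

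We need $\varphi=\lambda_1\alpha+\lambda_2\beta$ to be a cocycle for $\alpha$. Bilinearity gives
\[
J(\alpha,\varphi)=2\lambda_1J(\alpha)+\lambda_2J(\alpha,\beta)=0,
\]
because $J(\alpha)=0$ and $J(\alpha,\beta)=0$ is the mixed Jacobi identity. The same computation shows that $\psi=\mu_1\alpha+\mu_2\beta$ is a cocycle for $\beta$.

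\textbf{Part (1), compatibility condition.} Condition \eqref{compcoho} for $(\varphi,\psi)$ reads $J(\varphi,\beta)+J(\psi,\alpha)=0$; this follows by expanding and regrouping the twelve terms. By bilinearity this equals
\[
\lambda_1J(\alpha,\beta)+2\lambda_2J(\beta)+2\mu_1J(\alpha)+\mu_2J(\beta,\alpha)=0.
\]

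The only real obstacle is bookkeeping. The signs and ordering in \eqref{compcoho} must be matched with the cyclic form of $L(x,y,z)$, taking skew-symmetry into account, so that the identification $J(\varphi,\beta)+J(\psi,\alpha)$ is correct. It must also be checked that the cocycle convention in $Z^2_{Lie}$ agrees with $J(\mu,\varphi)=0$. I would verify both by writing out one representative term.
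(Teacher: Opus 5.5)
Your proof is correct. It is essentially the direct algebraic verification that the paper only mentions without details, organized cleanly through the symmetric bilinear form $J(\mu,\nu)$: the two Lie cocycle conditions and \eqref{compcoho} each become linear combinations of $J(\alpha)$, $J(\beta)$ and the mixed Jacobi identity $J(\alpha,\beta)=0$. The sign bookkeeping you flagged checks out. Note that your part (1) is proved directly by bilinearity, not derived from part (2) by differentiation as your opening sentence says.
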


\begin{prop}
    The compatible Lie algebra $\mathcal{L}_s$ for $s =  (n_1, \ldots, n_t)$ is a linear deformation of the model filiform Lie algebra $\mathcal{L}_k$ where $k = \sum_{i=1}^{j} n_i$.
\end{prop}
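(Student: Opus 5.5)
The plan is to recombine the two brackets of $\mathcal L_s$ linearly, using Lemma~\ref{lemco1}. Write $\alpha=[-,-]$ and $\beta=\{-,-\}$ for the two brackets of $\mathcal L_s$ on the basis $\{e_0,\dots,e_n\}$, and set $\mu_0=\alpha+\beta$. I read the statement as follows: there is a filiform Lie bracket $\mu_0$ of model type and a $2$-cocycle $\varphi$ of $(\mathcal L,\mu_0)$ with $\varphi\circ\varphi=0$, such that the compatible algebra $(\mathcal L,\mu_0,\varphi)$ is, up to an invertible linear recombination of its two brackets, the algebra $\mathcal L_s$. I would state this interpretation explicitly at the start of the proof, since the paper has not formally defined "linear deformation" in the compatible setting. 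I also read $k$ in the statement as $\sum_{i=1}^{t} n_i=n$, the full length of the series $s$.

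\emph{Step 1: $\mu_0$ is the model filiform Lie algebra.} By definition of $\mathcal L_s$, the index set $\{1,\dots,n-1\}$ (for $i$ with $e_{i+1}$ defined) is split into consecutive blocks. On each block exactly one of $\alpha(e_0,e_i)=e_{i+1}$ or $\beta(e_0,e_i)=e_{i+1}$ holds, and the other bracket of $e_0$ with $e_i$ vanishes. Moreover, all brackets not involving $e_0$ are zero in both structures. Hence
\[
\mu_0(e_0,e_i)=e_{i+1},\qquad 1\le i\le n-1,
\]
and all other products are zero (up to skew-symmetry). So $(\mathcal L,\mu_0)$ is exactly the model filiform Lie algebra $\mathcal L_n$. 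By part (2) of Lemma~\ref{lemco1} it is a Lie algebra, which is also clear directly.

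\emph{Step 2: the deformation.} Take $\varphi=\beta$, or equivalently $\varphi=-\alpha$.
\begin{itemize}
\item The map $\beta$ is a Lie bracket, so $\beta\circ\beta=0$.
\item By part (1) of Lemma~\ref{lemco1}, the pair $(\alpha+\beta,\beta)$ lies in the compatible cocycle space. Equivalently, by part (2), $(\mathcal L,\mu_0,\beta)$ is a compatible Lie algebra, so every $\mu_0+t\beta$ satisfies Jacobi.
\item From the Jacobi identity for $\mu_0+t\beta$, the coefficient of $t$ gives $d_{\mu_0}\beta=0$, i.e.\ $\beta$ is a $2$-cocycle of $\mathcal L_n$.
\end{itemize}
Therefore $\mu_t=\mu_0+t\beta$ is a linear deformation of $\mathcal L_n$. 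At $t=-1$ it recovers $\alpha$, and $\beta$ itself is the cocycle. So the pair $(\alpha,\beta)=(\mu_{-1},\beta)$ is obtained from $(\mu_0,\beta)$ by the invertible change $(\lambda_1,\lambda_2,\mu_1,\mu_2)=(1,-1,0,1)$, and Lemma~\ref{lemco1} guarantees that compatibility is preserved in both directions.

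The only delicate point, and the main obstacle I expect, is the bookkeeping at the block boundaries $\underline{n_j}$. One must check that each $i$ with $1\le i\le n-1$ is covered by exactly one block, so that the shifts of $\alpha$ and $\beta$ neither overlap nor leave gaps, and that the endpoint conventions in the definition of $\mathcal L_s$ (including the degenerate cases $n_1=0$ and $s=(0,n)$) are consistent with this. Once this is verified, the rest is formal from Lemma~\ref{lemco1}.
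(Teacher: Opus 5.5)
Your argument is correct and uses the same three facts as the paper: $\alpha+\beta$ is the model filiform bracket, $\beta$ is a $2$-cocycle of it (this follows from the compatibility of $(\mathcal L,\alpha+\beta,\beta)$ via Lemma~\ref{lemco1}(2)), and $\beta$ is itself a Lie bracket. The difference is packaging: the paper deforms within the compatible setting, checking that $(\beta,-\beta)\in Z^2_{CL}$ of $(\mathcal L,\alpha+\beta,0)$ and that $(\mathcal L,\beta,-\beta)$ is compatible, so the line $(\alpha+\beta,0)+t(\beta,-\beta)$ reaches $(\alpha,\beta)$ exactly at $t=-1$ with no recombination step, whereas you need that step (and your appeal to Lemma~\ref{lemco1}(1) is not equivalent to, nor needed beyond, part (2)).
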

\begin{proof}
    Let $(\mathcal{L}_s, \alpha, \beta)$ be the model filiform compatible Lie algebra for $s =  (n_1, \ldots, n_t)$. The algebra $(\mathcal{L}, \alpha + \beta, 0)$ is the model filiform Lie algebra $\mathcal{L}_k$, with $k$ as in the statement.     
    It suffices to show that $(\beta, - \beta)$ is a $2$-cocycle for the compatible Lie algebra $(\mathcal{L}, \alpha + \beta, 0)$ and that $(\mathcal{L}, \beta, - \beta)$ is a compatible Lie algebra. But this is clear, since $\beta$ is a $2$-cocycle of the Lie algebra $(\mathcal{L}, \alpha + \beta)$ and $(\mathcal{L}, \beta)$ is a Lie algebra.    
\end{proof}

\begin{prop}
    Let $(\mathcal{L}, \alpha, \beta)$ be a nilpotent compatible Lie algebra with nilindex $k$. Then for all  $\lambda_i, \mu_j \in \mathbb{C}$ the compatible Lie algebra $(\mathcal{L}, \lambda_1 \alpha + \lambda_2 \beta, \mu_1 \alpha + \mu_2 \beta)$ is nilpotent too with nilindex at most $k$.
\end{prop}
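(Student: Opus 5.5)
The plan is to compare the descending central sequence of the new compatible algebra with that of the original one. Write $\mathcal{L}' = (\mathcal{L}, \alpha', \beta')$ with $\alpha' = \lambda_1 \alpha + \lambda_2 \beta$ and $\beta' = \mu_1 \alpha + \mu_2 \beta$. By Lemma~\ref{lemco1}, $\mathcal{L}'$ is a compatible Lie algebra, so its descending central sequence ${\cal C}^i(\mathcal{L}')$ is defined as in the preliminaries, with the commutator $[\mathfrak{s},\mathfrak{t}]' = \alpha'(\mathfrak{s},\mathfrak{t}) + \beta'(\mathfrak{s},\mathfrak{t})$. The goal is to show by induction on $i$ that
\[
{\cal C}^i(\mathcal{L}') \subseteq {\cal C}^i(\mathcal{L}) \quad \text{for all } i \geq 0.
\]
Since ${\cal C}^k(\mathcal{L}) = 0$, this gives ${\cal C}^k(\mathcal{L}') = 0$. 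Hence $\mathcal{L}'$ is nilpotent with nilindex at most $k$.

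The base case $i=0$ is trivial, since both sides equal $\mathcal{L}$. For the inductive step, assume ${\cal C}^i(\mathcal{L}') \subseteq {\cal C}^i(\mathcal{L})$. For $x \in \mathcal{L}$ and $y \in {\cal C}^i(\mathcal{L}')$, expand bilinearly:
\[
\alpha'(x,y) + \beta'(x,y) = (\lambda_1+\mu_1)\,\alpha(x,y) + (\lambda_2+\mu_2)\,\beta(x,y).
\]
Both $\alpha(x,y)$ and $\beta(x,y)$ lie in $[\mathcal{L}, {\cal C}^i(\mathcal{L})] = \alpha(\mathcal{L},{\cal C}^i(\mathcal{L})) + \beta(\mathcal{L},{\cal C}^i(\mathcal{L})) = {\cal C}^{i+1}(\mathcal{L})$. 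It follows that ${\cal C}^{i+1}(\mathcal{L}')$, which is the span of such elements, is contained in ${\cal C}^{i+1}(\mathcal{L})$. This completes the induction.

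There is essentially no obstacle here. The only point needing care is that the commutator of subspaces is the \emph{sum} of the two bracket images. Because of this, any linear combination of $\alpha$ and $\beta$ evaluated on $\mathcal{L}\times{\cal C}^i$ stays inside ${\cal C}^{i+1}(\mathcal{L})$. The compatibility of $\mathcal{L}'$, supplied by Lemma~\ref{lemco1}, is needed only so that the conclusion concerns a genuine compatible Lie algebra. Finally, the inequality on nilindex can be strict, for instance when all $\lambda_i,\mu_j$ vanish. This is why the statement gives only an upper bound.
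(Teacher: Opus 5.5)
Your proof is correct and follows the same idea as the paper's. The paper expands each length-$k$ monomial in the new brackets multilinearly into monomials in $\alpha$ and $\beta$, which all vanish; your induction $\mathcal{C}^i(\mathcal{L}')\subseteq\mathcal{C}^i(\mathcal{L})$ is a more careful formalization of that expansion. One small wording point: $\mathcal{C}^{i+1}(\mathcal{L}')$ is spanned by $\alpha'(x,y)$ and $\beta'(x,y)$ individually, not by their sum, but since you show that both $\alpha(x,y)$ and $\beta(x,y)$ lie in $\mathcal{C}^{i+1}(\mathcal{L})$, each of $\alpha'(x,y)$ and $\beta'(x,y)$ lies there too.
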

\begin{proof}
    The result follows by the fact that there is a $k\in \mathcal{N}$ such that the monomials with all the combinations of brackets $\alpha$ and $\beta$ of length $k$ is zero. Since the monomials with any linear combinations of the brackets $\lambda_1 \alpha + \lambda_2 \beta, \mu_1 \alpha + \mu_2 \beta$ can be linearly expanded to a monomial of the previous type, we conclude that the algebra $(\mathcal{L}, \lambda_1 \alpha + \lambda_2 \beta, \mu_1 \alpha + \mu_2 \beta)$ must be nilpotent of length at most $k$.
\end{proof}

    In fact, there are infinitely many $\mu \in \mathbb{C}$ such that $\mathcal{L}' = (\mathcal{L}, \alpha, \beta + \mu \alpha)$ is nilpotent of nilindex $k$. Moreover, if $(\mathcal{L}, \alpha, \beta)$ is a filiform compatible Lie algebra, we can choose $\mu$ such that $\beta + \mu \alpha$ is a filiform Lie algebra. Thus $\mathcal{L}$ can be identified with a pair $(f, \alpha')$, where $f := \beta + \mu \alpha$ is a filiform Lie algebra and $\alpha' := \mu \alpha$ is a $2$-cocycle in $Z^2_{Lie}(\mathcal{L}, \mathcal{L})$ that defines a linear deformation of $f$. This suggests the generalization of a theorem by Vergne \cite{Vergne}.

\begin{conjecture}
    Let $\mathcal{L}$ be a filiform compatible Lie algebra of dimension $n+1$. Then $\mathcal{L}$ is a linear deformation of the algebra $\mathcal{L}_s$ for certain $s \in \mathbb{N}^t$.
\end{conjecture}

\subsection{Graded filiform compatible Lie algebras}

Let $\mathcal{L}$ be a filiform compatible Lie algebra. Consider the natural descending filtration obtained by the ideals $\mathcal{L}^i$. Then we can associate to $\mathcal{L}$ a graded algebra $\textrm{gr}\, \mathcal{L}$ by setting $\textrm{gr}\, \mathcal{L} = \oplus \, \mathcal{L}_i$ where $ \mathcal{L}_i = \mathcal{L}^{i-1}/\mathcal{L}^i$ with products defined for homogeneous elements by
$$
\begin{tabular}{llllll}
$[-,-]_{ij} : $ &$ \mathcal{L}_i \times \mathcal{L}_j$ &$\to \mathcal{L}_{i+j}$ 
& 
$\{-, -\}_{ij} : $ &$ \mathcal{L}_i \times \mathcal{L}_j$ &$\to \mathcal{L}_{i+j}$ 
\\[1ex]
 &$(x + \mathcal{L}^i,\, y + \mathcal{L}^j)$ &$\mapsto [x, y] + \mathcal{L}^{i+j}$ 
& 
&$(x + \mathcal{L}^i,\, y + \mathcal{L}^j)$ &$\mapsto \{x, y\} + \mathcal{L}^{i+j}$ 
\end{tabular}
$$
    The algebra $\textrm{gr}\, \mathcal{L}$ introduced above is a graded compatible Lie algebra. Moreover, $\textrm{gr}\, \mathcal{L}$ is a filiform algebra by Lemma~\ref{lemmafili1}.

\begin{prop} \label{grL}
    Let $\mathcal{L}$ be a filiform compatible Lie algebra of dimension $n+1$. Then, there is a homogeneous basis $\left\{X_0, \ldots, X_n\right\}$ of $\textrm{gr}\, \mathcal{L}$ such that
  { $$\begin{array}{ll}
        X_0, X_1 \in \mathcal{L}_1, & X_i\in  \mathcal{L}_i \quad i=2,\dots,n_1+1, \\{}
        [X_0,X_i]=X_{i+1}, \ 1 \leq i \leq n_1, & [X_0,X_{n_1+1}]=0 \\{}
        [X_1,X_2]=0, & [X_i,X_j]=0 \ 1\leq i <j\leq n_1+1 \mbox{ and } i+j\neq n_1+1,\\{}
        [X_i,X_{n_1+1-i}]=(-1)^{i}\alpha X_{n_1+1}& \mbox{ with } \alpha=0 \mbox{ if } n_1 \mbox{ is odd}
    \end{array}$$}
    
\end{prop}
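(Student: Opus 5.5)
The plan is to reduce the statement to Vergne's classification of graded filiform Lie algebras, applied to a suitable truncation of the first component $(\textrm{gr}\,\mathcal{L},[-,-])$.

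\textbf{Step 1: the grading.} Start from the adapted basis $\{e_0,\dots,e_n\}$ of Lemma~\ref{lemmafili1}, with $M=\textrm{span}(e_0,e_1)$ complementary to $\mathcal{L}^2$. Set $X_0,X_1$ to be the classes of $e_0,e_1$ in $\mathcal{L}_1=\mathcal{L}/\mathcal{L}^2$. For $2\le i\le n$, let $X_i$ be the class of $e_i$ in $\mathcal{L}_i$. By filiformity, $\dim\mathcal{L}_1=2$ and $\dim\mathcal{L}_i=1$ for $i\ge 2$. The bracket $[-,-]$ on $\textrm{gr}\,\mathcal{L}$ is graded, so $[X_i,X_j]=a_{ij}X_{i+j}$ for scalars $a_{ij}$, with the convention $X_k=0$ for $k>n$. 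Since $[e_0,e_i]=e_{i+1}$ exactly for $1\le i\le n_1$, we get $[X_0,X_i]=X_{i+1}$ for $1\le i\le n_1$. Since $[M,e_{n_1+1}]=0$, we get $[X_0,X_{n_1+1}]=0$.

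\textbf{Step 2: truncation.} Let $I=\bigoplus_{k\ge n_1+2}\mathcal{L}_k$. It is an ideal of $(\textrm{gr}\,\mathcal{L},[-,-])$ because the bracket only raises degree. The quotient $\mathfrak{f}=\textrm{gr}\,\mathcal{L}/I$ has dimension $n_1+2$ and basis the classes of $X_0,\dots,X_{n_1+1}$. It is a graded Lie algebra in which $\textrm{ad}\,X_0$ has a single chain $X_1\mapsto X_2\mapsto\cdots\mapsto X_{n_1+1}$. Hence $\mathfrak{f}$ is a graded filiform Lie algebra, with the natural grading $\deg X_0=\deg X_1=1$ and $\deg X_i=i$.

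\textbf{Step 3: apply Vergne, or redo it by hand.} By Vergne's theorem \cite{Vergne}, $\mathfrak{f}$ is isomorphic, via a graded isomorphism, either to the model filiform algebra or to the algebra $Q$. The algebra $Q$ has extra brackets $[X_i,X_{n_1+1-i}]=(-1)^iX_{n_1+1}$ and only exists when $\dim\mathfrak{f}=n_1+2$ is even, that is, when $n_1$ is even. This is exactly the parity condition in the statement.

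To keep the basis inherited from Lemma~\ref{lemmafili1}, I would rather reproduce Vergne's argument directly.
\begin{itemize}
\item First replace $X_1$ by $X_1-a_{12}X_0$. This leaves $[X_0,X_1]=X_2$ unchanged and kills $[X_1,X_2]$.
\item Next, apply the Jacobi identity with $X_0$ to get the recursion
$$a_{ij}=a_{i+1,j}+a_{i,j+1},\qquad i+j\le n_1 .$$
\item Inducting on $i+j$, starting from $a_{12}=0$ and $[X_0,X_{n_1+1}]=0$, this recursion shows $a_{ij}=0$ whenever $i+j\le n_1$.
\item In top degree $i+j=n_1+1$, the same recursion forces $a_{i,n_1+1-i}=(-1)^{i}\alpha$ for a single scalar $\alpha$. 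Antisymmetry $a_{ij}=-a_{ji}$ then forces $\alpha=0$ when $n_1$ is odd.
\end{itemize}
All the basis changes are graded, so the resulting basis stays homogeneous.

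\textbf{Step 4: lift back and the expected obstacle.} Finally, lift these relations from $\mathfrak{f}$ back to $\textrm{gr}\,\mathcal{L}$. This is the step I expect to need the most care. In $\textrm{gr}\,\mathcal{L}$, the bracket $[X_i,X_j]$ with $i,j\le n_1+1$ could a priori land in $X_{i+j}$ with $i+j>n_1+1$, and the truncation does not see this. For the relations with $i+j\le n_1+1$ the degrees already place the bracket inside the first block, so the computation in $\mathfrak{f}$ transfers verbatim. For $[X_0,X_{n_1+1}]=0$ we use $[M,e_{n_1+1}]=0$ from Lemma~\ref{lemmafili1}. The relation $[X_1,X_2]=0$ also follows from the normalization in Step 3.

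I would also verify that replacing $X_1$ by $X_1-a_{12}X_0$ stays compatible with Lemma~\ref{lemmafili1}. The new $X_1$ still lifts to an element of $M$, and $\textrm{ad}\,e_0$ is unchanged, so the block structure, and hence $n_1$, is preserved.
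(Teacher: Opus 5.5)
\textbf{There is a gap in the by-hand version of Step~3.} Your reduction to the truncated graded filiform Lie algebra $\mathfrak f$ (Steps 1--2) is fine, but this step does not prove what you claim. The relations you use are $a_{ij}=a_{i+1,j}+a_{i,j+1}$ for $i+j\le n_1$ (Jacobi with $X_0$), antisymmetry, and $a_{12}=0$. These are all linear. If every bracket of degree $d-1$ vanishes, they only force the degree-$d$ coefficients into the form $a_{i,d-i}=(-1)^{i}\beta$. Antisymmetry kills $\beta$ when $d$ is even, but not when $d$ is odd.

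So for every odd $d$ with $5\le d\le n_1$ a $Q$-type parameter survives. Your induction ``$a_{ij}=0$ for $i+j\le n_1$'' therefore breaks as soon as $n_1\ge 5$. For example, take $n_1=5$ and set $a_{23}=c$, $a_{14}=-c$, $a_{24}=c$, $a_{15}=-2c$, with $a_{12}=a_{13}=0$ and $a_{ji}=-a_{ij}$. These values satisfy every recursion, antisymmetry and $[X_0,X_6]=0$, yet for $c\neq 0$ they contradict the statement.

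What rules them out is a Jacobi identity on a triple \emph{not} containing $X_0$: here $J(X_1,X_2,X_3)=-2c^2X_6$. In general, suppose $[X_i,X_{d-i}]=(-1)^i\beta X_d$ with $d\le n_1$ odd and all brackets of degree $d-1$ zero. Then the recursion plus antisymmetry in degree $d+1$ give $[X_1,X_d]=(1-\tfrac{d+1}{2})\beta X_{d+1}$. The identity $J(X_1,X_2,X_{d-2})$ then yields $(1-\tfrac{d+1}{2})\beta^2=0$, so $\beta=0$.

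This quadratic step is what the paper's proof relies on. It follows Vergne's induction on dimension: pass to the quotient by $\CC X_{n_1+1}$, then kill the surviving parameter with a Jacobi identity on a triple avoiding $X_0$. Your argument omits this step. Simply quoting Vergne's theorem, your first option, would have avoided the problem.

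\textbf{Step~4 also leaves open the point you flag yourself.} The statement asserts $[X_i,X_j]=0$ for all $1\le i<j\le n_1+1$ with $i+j\neq n_1+1$. This includes $i+j\ge n_1+2$, for example $[X_1,X_{n_1+1}]$ and $[X_2,X_{n_1}]$, and these brackets are invisible in $\mathfrak f$. You only check $[X_0,X_{n_1+1}]$ and $[X_1,X_2]$.

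The gap is easy to close. From $[M,e_{n_1+1}]=0$ you get $[\mathcal L_1,X_{n_1+1}]=0$. Then induct on $i$ using $[X_i,X_j]=[X_0,[X_{i-1},X_j]]-[X_{i-1},[X_0,X_j]]$, together with $[X_0,\mathcal L_{n_1+1}]=0$; this gives $[X_i,X_j]=0$ whenever $i+j\ge n_1+2$. This argument needs to be part of the proof.
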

\begin{proof}
    {Suppose $\mathcal{L}$ as in Lemma \ref{lemmafili1}, i.e. 
  \begin{align*}
            &[e_0, e_i] = e_{i+1}, & \textrm{for $1\leq i\leq \underline{n_1}$,} \\
            &[M, e_i] = 0, \quad \quad \quad \quad  \quad \quad \left\{e_0, e_i\right\} = e_{i+1}, & \textrm{for $\underline{n_1} < i\leq \underline{n_2}$,} \\
            &[e_0, e_i] = e_{i+1}, & \textrm{for $\underline{n_2}< i\leq \underline{n_3}$,}\\
            & \ldots & \ldots\\
            &[e_0, e_i] = e_{i+1}, & \textrm{for $\underline{n_{t-1}} < i\leq \underline{n_t}$ (if $t$ is odd),} \\
            &[M, e_i] = 0, \quad \quad \quad \quad  \quad \quad \left\{e_0, e_i\right\} = e_{i+1}, & \textrm{for $\underline{n_{t-1}} < i\leq \underline{n_t}$ (if $t$ is even),} 
        \end{align*}
        where $M= \textrm{span}(e_0, e_{1})$ is complementary to $\mathcal{L}^2$ in $\mathcal{L}$, $e_i \in \mathcal{L}^i$ for $i>1$. 

        First, let  $Y_i$ be a nonzero vector belonging to $\textrm{gr}\, \mathcal{L}$ with $1 \leq i \leq n_1 +1$. There is $Y \in \mathcal{L}_1$, such that 
        $$[Y,Y_i]=f_i(Y)Y_{i+1}$$
    with the linear mapping $f_i:\mathcal{L}_1 \longrightarrow \CC $ being not identically null. There is $X_0 \in \mathcal{L}_1$ such that $f_i(X_0)\neq 0, \ \forall i, \ 1 \leq i \leq n_1$. One can choose the vectors $X_i \in \mathcal{L}_i$ with $X_i=\lambda_i Y_i $, so that they satisfy the conditions 
    $$[X_0,X_i]=X_{i+1}, \ 1 \leq i \leq n_1, \ [X_0,X_{n_1+1}]=0.$$
    As we have $dim(\mathcal{L}_1)=2$, then $[X_0,X_2]=X_3$ and $[X_1,X_2]=aX_3$. We can assert  that there exists a unique $X_1\in \mathcal{L}_1$ satisfying $[X_1,X_2]=0$. The others relations are determined by induction. We consider the basis $(X_0, \dots , X_{n_1+1})$ of $\mathcal{L}_1 \oplus \mathcal{L}_2 \oplus \cdots \oplus \mathcal{L}_{n_1+1}$ defined above. Then $(X_0,X_1, \dots, X_{n_1})$ is a basis of the filiform compatible graded algebra $(\mathcal{L}_1 \oplus \mathcal{L}_2 \oplus \cdots \oplus \mathcal{L}_{n_1+1})/\CC X_{n_1+1}$ satisfying the relations of the Proposition. Suppose that $n_1+1$ is odd. We have $[X_i,X_j]=0$ as long as $i+j < n_1$. One put $[X_i,X_{n_1+1-i}]=\alpha_i X_{n_1+1}$. By the Jacobi identity we have that $\alpha_i=-\alpha_{i+1}$ and therefore $\alpha_i=(-1)^{i}\alpha$. This gives us the sought-after basis vectors $X_i$ for $1\leq i \leq n_1+1$. Suppose now $n_1+1$ even. Then, we get 

    $$[X_i,X_{n_1-i}]=(-1)^{i}\alpha X_{n_1} \quad [X_i,X_{n_1+1-i}]=\alpha_{i} X_{n_1+1}$$
    the Jacobi identity leads to $(-1)^{i}\alpha=\alpha_{i+1}+\alpha_i$ and thus, $\alpha_i=(-1)^{2+i}(\alpha_1+(i-1)\alpha)$. Since $\alpha_{\frac{n_1+1}{2}}=0$, we deduce $\alpha_1=(1-\frac{n_1+1}{2})\alpha$ and $\alpha_{i}=(-1)^{i+1}(1-\frac{n_1+1}{2})\alpha$. From the Jacobi identity for the triplet $(X_1, X_{\frac{n_1-1}{2}}, X_{n_1-1})$ we have $\alpha=0=\alpha_i$. }
\end{proof}

\begin{thm} Any complex seven dimensional naturally graded filiform compatible Lie algebra with $n_1=4$ is isomorphic to one of the following compatible lie algebras which can be expressed on a certain basis $\{ X_0,$ $X_1,$ $X_2,$ $X_3,$ $X_4,$ $X_5,$ $X_6\}$ into two sub-families as follows:
$$F_1:\left\{
\begin{array}{llll}
   [X_0,X_1]=X_2,  & \{X_0,X_1\}=\alpha_1 X_2,& \ &\  \\{}
  [X_0,X_2]=X_3,  & \{X_0,X_2\}=\alpha_2 X_3,& &  \\{}
  [X_0,X_3]=X_4,  & \{X_0,X_3\}=\alpha_3 X_4,& &  \\{}
[X_0,X_4]= X_5,  & \{X_0,X_4\}=\alpha_4 X_5,& & \\{}
  & \{X_0,X_5\}=X_6.& & \\
\end{array}\right.$$
$$F_2:\left\{
\begin{array}{llll}
   [X_0,X_1]=X_2,  & \{X_0,X_1\}=\alpha X_2,& \{X_1,X_2\}=\lambda X_3,&\{X_2,X_3\}=\lambda X_5, \\{}
  [X_0,X_2]=X_3,  & \{X_0,X_2\}=\beta X_3,&\{X_1,X_3\}=\lambda X_4, & \lambda \neq 0 \\{}
  [X_0,X_3]=X_4,  & \{X_0,X_3\}=\beta X_4,& &  \\{}
[X_0,X_4]= X_5,  & \{X_0,X_4\}=\alpha X_5,& & \\{}
  & \{X_0,X_5\}=X_6.& & \\
\end{array}\right.$$

\end{thm}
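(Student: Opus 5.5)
Throughout, $\mathcal{L}=\textrm{gr}\,\mathcal{L}$ is seven dimensional, naturally graded and filiform, so $n=6$. Since $n_1=4$, Lemma~\ref{lemmafili1} gives a decomposition $s=(4,n_2,\dots)$ with $\underline{n_t}=6$. The natural grading is $\mathcal{L}_1=\textrm{span}(X_0,X_1)$ and $\mathcal{L}_i=\textrm{span}(X_i)$ for $2\le i\le 6$. Both brackets have degree zero: $[\mathcal{L}_i,\mathcal{L}_j]\subseteq \mathcal{L}_{i+j}$ and $\{\mathcal{L}_i,\mathcal{L}_j\}\subseteq\mathcal{L}_{i+j}$.

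\textbf{Step 1: normal form for the first bracket.} Apply Proposition~\ref{grL} to $[-,-]$. Since $n_1=4$ is even, the table is $[X_0,X_i]=X_{i+1}$ for $1\le i\le 4$, $[X_1,X_2]=0$, and $[X_i,X_{5-i}]=(-1)^i\alpha X_5$. The proposition forces $\alpha=0$ unless $n_1$ is even, so here $\alpha$ is still free. I would kill it with the Jacobi identity of $[-,-]$ on $(X_0,X_1,X_3)$ or $(X_1,X_2,X_3)$, or with a rescaling of $X_1$. After that, the only nonzero products of $[-,-]$ are $[X_0,X_i]=X_{i+1}$ for $1\le i\le 4$. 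Moreover $[\mathcal{L},X_5]=0$ and $X_6\notin[\mathcal{L},\mathcal{L}]$.

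\textbf{Step 2: the second bracket and its constraints.} Filiformity of the compatible algebra requires $X_6\in\{\mathcal{L}_1,X_5\}$. We may rescale $X_6$ so that $\{X_0,X_5\}=X_6$, after possibly replacing $X_0$ by $X_0+cX_1$; this keeps Step 1 intact after a triangular adjustment of the $X_i$. By homogeneity, write
$$\{X_0,X_i\}=a_iX_{i+1},\quad \{X_1,X_i\}=b_iX_{i+1},\quad \{X_i,X_j\}=c_{ij}X_{i+j}\ (i,j\ge 2,\ i+j\le 6),$$
with $\{X_0,X_1\}=a_1X_2$. Then impose three sets of conditions: the Jacobi identity for $\{-,-\}$; the mixed Jacobi identity $L(x,y,z)=0$ of Proposition~\ref{compatible}, evaluated on homogeneous triples; and the Jacobi identities of $[-,-]$ itself.

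The mixed identity on triples $(X_0,X_i,X_j)$ is the key tool. It says that $\textrm{ad}^{[\,]}_{X_0}$ acts as a derivation of $\{-,-\}$ up to the correction term $\textrm{ad}^{\{\}}_{X_0}$ acting on $[-,-]$. This gives recursions such as $c_{i+1,j}+c_{i,j+1}$ expressed in terms of $a$'s and $b$'s, which determine all $c_{ij}$ from $b_2,b_3$ and the $a_i$. I expect these relations to force the dichotomy below.

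\textbf{Step 3: case split.} Case (a): $\{X_1,\mathcal{L}^2\}=0$, i.e.\ all $b_i=0$ (and then all $c_{ij}=0$). The remaining conditions are automatically satisfied, and $\{-,-\}$ is $X_0$-string-like with free $a_1,\dots,a_4$. This gives $F_1$. Case (b): some $b_i\ne0$. Then the mixed identity on $(X_0,X_1,X_2)$ and $(X_0,X_1,X_3)$ should give $b_2=b_3=:\lambda$, $c_{23}=\lambda$, $a_2=a_3$ and $a_1=a_4$. The Jacobi identity of $\{-,-\}$ on $(X_0,X_1,X_4)$, together with the condition $\{X_0,X_5\}=X_6$, should kill $b_4$, $b_5$ and the remaining $c$'s. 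This gives $F_2$ with $\lambda\ne0$.

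\textbf{Main obstacle.} The hardest part is the bookkeeping in case (b). The adapted basis is not unique: the changes $X_1\mapsto X_1+cX_0$ and diagonal rescalings must be used to normalize without destroying Step 1. This freedom must be fixed before solving the equations, or spurious parameters survive. I would first fix the basis so that $[-,-]$ is exactly $\mathrm{L}_6$. Then only diagonal automorphisms $X_0\mapsto pX_0$, $X_1\mapsto qX_1$ remain, and I would use them to normalize $\{X_0,X_5\}=X_6$, leaving the parameters $\alpha$, $\beta$ and $\lambda$ as in the statement.
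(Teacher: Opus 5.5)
\textbf{Step 1 does not work as stated.} With $[X_0,X_5]=[X_1,X_5]=0$, the bracket $[X_0,X_i]=X_{i+1}$, $[X_1,X_4]=-\alpha X_5$, $[X_2,X_3]=\alpha X_5$ satisfies the Jacobi identity for every $\alpha$. On $(X_0,X_1,X_3)$ you get $-[X_1,X_4]-[X_2,X_3]=\alpha X_5-\alpha X_5=0$, and on $(X_1,X_2,X_3)$ every term vanishes; this is the $Q$-type graded filiform structure on $X_0,\dots,X_5$. The remaining basis changes $X_0\mapsto pX_0$, $X_1\mapsto qX_1$ only multiply $\alpha$ by a nonzero factor, so you can reach $\alpha=\delta\in\{0,1\}$ but no further. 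The paper keeps $\delta$ and removes $\delta=1$ with the \emph{mixed} identity. In your notation, with $\{X_0,X_5\}=X_6$, the identities $L(X_0,X_1,X_4)$, $L(X_1,X_2,X_3)$ and $L(X_0,X_2,X_3)$ give $c_{24}+b_5+\delta=0$, $\delta b_5=0$ and $c_{24}=\delta$. These are inconsistent for $\delta=1$. So you must carry $\delta$ into Step 2: it is eliminated by the interaction with $\{X_0,X_5\}=X_6$, not by anything inside $[-,-]$.

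\textbf{Step 3(b) cannot produce $F_2$.} You only assert that the constraints ``should'' give $F_2$; in fact they exclude it. Once $\delta=0$, the mixed identities give $b_2=b_3$, $c_{23}+b_4=b_2$ and $c_{24}=b_5=0$. The Jacobi identity of $\{-,-\}$ on $(X_0,X_1,X_4)$ gives $b_4=0$, so $c_{23}=b_2=b_3=\lambda$. (The relations $a_2=a_3$ and $a_1=a_4$ come from the Jacobi identity of $\{-,-\}$ on $(X_0,X_1,X_2)$ and $(X_0,X_1,X_3)$, not from the mixed identity.) Now take the Jacobi identity of $\{-,-\}$ on $(X_0,X_2,X_3)$, which your sketch of case (b) never uses and which the paper's proof also omits:
\[
\{X_0,\{X_2,X_3\}\}+\{X_2,\{X_3,X_0\}\}+\{X_3,\{X_0,X_2\}\}=(c_{23}-a_3c_{24})X_6=c_{23}X_6 .
\]
Hence $\lambda=0$ and case (b) is empty. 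Concretely, in $F_2$ this expression equals $\lambda X_6\neq 0$, so the second bracket of $F_2$ is not a Lie bracket and $F_2$ is not a compatible Lie algebra. Your overall strategy is the paper's: a homogeneous ansatz, the mixed identities, the Jacobi identity for $\{-,-\}$, and a split on $\lambda$. Carried out completely, it shows that every such algebra lies in $F_1$. But the step ``case (b) gives $F_2$ with $\lambda\neq0$'' is false, so your proposal, like the paper's proof, does not establish the statement in the form given.
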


\begin{proof} On account of Proposition \ref{grL} we have the following non-null bracket products with $\d \in \{0,1\}$
$$\begin{array}{llll}
   [X_0,X_1]=X_2,  & \{X_0,X_1\}=a_{01}X_2,& \{X_1,X_2\}=a_{12}X_3,&\{X_2,X_3\}=a_{23}X_5,  \\{}
  [X_0,X_2]=X_3,  & \{X_0,X_2\}=a_{02}X_3,& \{X_1,X_3\}=a_{13}X_4,&\{X_2,X_4\}=a_{24}X_6,  \\{}
  [X_0,X_3]=X_4,  & \{X_0,X_3\}=a_{03}X_4,& \{X_1,X_4\}=a_{14}X_5,&  \\{}
[X_0,X_4]= X_5,  & \{X_0,X_4\}=a_{04}X_5,& & \\{}
[X_1,X_4]= -\d X_5,  & & & \\{}
[X_2,X_3]= \d X_5,  & & & \\{}
  & \{X_0,X_5\}=X_6,&\{X_1,X_5\}=a_{15}X_6, &\{X_2,X_4\}=a_{24}X_6. \\
\end{array}$$
From the mixed Jacobi identity for the triplets $(X_0,X_1,X_i)$ with $2\leq i \leq 4$, $(X_1,X_2,X_3)$ and $(X_0,X_2,X_3)$ we have the following restrictions
$$\begin{array}{l}
    (1) \quad a_{12}=a_{13} \\
    (2) \quad a_{23}+a_{14}-a_{12}+\d (a_{01}-a_{03})=0 \\
    (3) \quad a_{24}+a_{15}+\d =0\\
    (4) \quad \d a_{15}=0 \\
    (5) \quad a_{24}- \d =0
\end{array}
    $$
If $\d =1$ then by $(5)$ we have $a_{24}=1$ and from $(3)$ we deduce $a_{15}=-2$ which is a contradiction with $(4)$. Therefore, $\d =0$ and then, by $(5)$ and $(3)$ it is obtained $a_{24}=a_{15}=0$. Next, from the Jacobi identity on the bracket product $\{ -, -\}$ applied on the triplets $(X_0,X_1,X_4), \ (X_0,X_1,X_2)$ and $(X_0,X_1,X_3)$ we get 
  $$\begin{array}{l}
    (6) \quad a_{14}=0 \\
    (7) \quad a_{02}a_{12}-a_{12}a_{03}=0 \\
    (8) \quad a_{01}a_{23}-a_{12}a_{04}=0\\
    \end{array}
    $$  
    also $a_{12}=a_{13}=a_{23}$. Thus we have two possibilities, if $a_{12}=\lambda=0$ then we have four free parameters $\alpha_1=a_{01}, \ \alpha_2=a_{02}, \ \alpha_3=a_{03}$ and $\alpha_4=a_{04}$. On the other hand, if $a_{12}=\lambda\neq 0$ then we have only two free parameters $\alpha=a_{01}=a_{04}$ and $\beta=a_{02}=a_{03}$. 
\end{proof}

\subsection{Solvable extensions of the algebra $\cal L_s$.}

Following Section~\ref{sec3}, we study the existence of solvable Lie algebras whose nilradical is the filiform compatible Lie algebra $\mathcal{L}_s$. We begin by describing the algebra of  derivations of $\mathcal{L}_s$, which will be the key input to prove the existence of solvable extensions. 
The next observation will be used to describe the algebra of derivations of $\mathcal{L}_s$. 
Let $(\gg,[\,,\,]_1,[\,,\,]_2)$ be a compatible Lie algebra and set for all $x,y \in \gg$ the bracket $[[x,y]]=[x,y]_1+[x,y]_2$. 
Any derivation for both $[\,,\,]_1$ and $[\,,\,]_2$ is a derivation of $(\gg,[[\,,\,]])$. 
Therefore the algebra of derivations of $(\gg,[\,,\,]_1,[\,,\,]_2)$ is a subalgebra of the algebra of derivations of $(\gg,[[\,,\,]])$. 
For $\mathcal{L}_s$, the sum bracket $[[\,,\,]]$ is the model filiform Lie algebra, hence every derivation of $\mathcal{L}_s$ is a derivation of the model filiform Lie algebra $\mathcal{L}_k$, see Remark~\ref{rmderL}. 

\begin{rem}
    To exploit this fact, we introduce the following general construction. Let $\cal N$ be a $n$-dimensional nilpotent compatible Lie algebra with Lie components $\cal N_1$ and $\cal N_2$ such that $\textrm{Der}(\cal N)$ has a maximal torus $\mathcal{T}$ of diagonal derivations.   Choose two sets $D=\{d_1, \dots, d_r\}$ and $D'=\{d_1', \dots, d_s'\}$ of linearly independent derivations in $\mathcal{T}$.
    Set
\[
\mathcal R=\mathcal N\oplus \mathcal Q,\qquad 
\mathcal Q=\operatorname{span}\{x_1,\dots,x_r,y_1,\dots,y_s\},
\]
with $[[\mathcal Q,\mathcal Q]]=0$ and actions on $\mathcal N$ given by
\[
P_{x_i}\!\mid_{\mathcal N}=d_i,\quad Q_{x_i}\!\mid_{\mathcal N}=0\quad(1\le i\le r),\qquad
P_{y_j}\!\mid_{\mathcal N}=0,\quad Q_{y_j}\!\mid_{\mathcal N}=d'_j\quad(1\le j\le s).
\]
Then $\mathcal R$ is a compatible solvable Lie algebra of dimension $n+r+s$ with nilradical $\mathcal N$.
\end{rem}

At this point, we recall that the maximal torus of the model filiform Lie algebra has dimension two. 

\begin{thm} \label{thm5.1} For $1\le k\le 4$, there exists a solvable compatible Lie extension of $\cal L_s$ of dimension $(\underline{n_t}+1+k)$.
\end{thm}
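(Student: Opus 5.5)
The plan is to realize all four extensions at once through the general construction of the preceding Remark, applied to $\mathcal N=\mathcal L_s$ with the two-dimensional torus of diagonal derivations of $\mathcal L_s$. The first step is to write this torus down explicitly. Every nonzero product of $\mathcal L_s$, in either bracket, has the form $[e_0,e_i]=e_{i+1}$ or $\{e_0,e_i\}=e_{i+1}$. Hence for any $a,b\in\mathbb C$ the diagonal map
$$d_{a,b}(e_0)=a\,e_0,\qquad d_{a,b}(e_1)=b\,e_1,\qquad d_{a,b}(e_i)=\big((i-1)a+b\big)e_i\quad (2\le i\le n)$$
is a derivation of both $([-,-])$ and $(\{-,-\})$. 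This is a one-line check, since each product $e_0\cdot e_i=e_{i+1}$ raises the weight by exactly $a$. Put $d_1=d_{1,0}$ and $d_2=d_{0,1}$. These are linearly independent, commuting, and non-nilpotent. By the observation preceding the Remark, $\textrm{Der}(\mathcal L_s)\subseteq \textrm{Der}(\mathcal L_k)$, and the maximal torus of $\mathcal L_k$ is two-dimensional (Remark~\ref{rmderL}). So $\mathcal T=\textrm{span}\{d_1,d_2\}$ is a maximal torus of $\textrm{Der}(\mathcal L_s)$.

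Next, for $k=r+s$ I choose $(r,s)=(1,0),(1,1),(2,1),(2,2)$. I take $D\subseteq\{d_1,d_2\}$ with $|D|=r$ and $D'\subseteq\{d_1,d_2\}$ with $|D'|=s$, and form $\mathcal R=\mathcal L_s\oplus\textrm{span}\{x_1,\dots,x_r,y_1,\dots,y_s\}$ as in the Remark. In this algebra $x_i$ acts only through the first bracket and $y_j$ only through the second, and $[[\mathcal Q,\mathcal Q]]=0$. This gives dimension $\underline{n_t}+1+k$.

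I will not rely on the Remark as a black box; instead I will verify the compatible Lie structure directly. The Jacobi identity of each bracket on triples from $\mathcal N\cup\mathcal Q$ reduces to two facts: the $d$'s are derivations, and any two of them commute, since they are all diagonal. The three conditions of the semidirect-product Proposition hold trivially because each $d\in\mathcal T$ lies in $\textrm{Der}(\mathcal N_1)\cap\textrm{Der}(\mathcal N_2)$ and the other adjoint action is zero. The only genuinely new triples are $(x_i,y_j,e)$. For these, $L(x_i,y_j,e)$ reduces to $\pm(d'_jd_i-d_id'_j)(e)=0$, again by commutativity. Solvability follows because $\mathcal D^1(\mathcal R)\subseteq\mathcal L_s$, which is nilpotent.

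The step I expect to be the main obstacle is showing that the special nilradical of $\mathcal R$ is exactly $\mathcal L_s$. First, $\mathcal L_s$ is a special ideal. The torus acts diagonally with weights, and my plan is to show that every ideal $\mathcal J$ of $\mathcal R$ is spanned by weight vectors together with possibly elements of $\mathcal Q$. Then $[[\mathcal L_s,\mathcal J]]$ is a weight-graded subspace of $\mathcal L^2_s$-type, and it is stable under $e_0$-multiplication in both brackets, hence an ideal. Second, maximality: any strictly larger ideal contains an element $z+q$ with $0\neq q\in\mathcal Q$. Because $P_q$ or $Q_q$ acts on $e_0$ or $e_1$ by a nonzero scalar, which follows from the linear independence of the chosen $d$'s, $z+q$ cannot generate a nilpotent ideal. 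This contradicts special nilpotency.
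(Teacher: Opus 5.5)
Your proposal is correct and is essentially the paper's proof. The paper simply applies the preceding Remark to $\mathcal L_s$ with $D,D'$ chosen inside the two-dimensional torus of diagonal derivations $d_{a,b}$ of the model filiform algebra, after observing (as you do) that these are derivations of both brackets of $\mathcal L_s$, so that $k=r+s$ runs over $1,\dots,4$. Your explicit check of the mixed Jacobi identity and of solvability goes beyond what the paper writes, but the special-nilradical property, which the paper takes from the Remark without proof, is the step you leave sketchy: $[[\mathcal L_s,\mathcal J]]$ need not lie in $\mathcal L_s^2$ (take $\mathcal J=\mathcal R$), and a clean argument is that either $\mathcal J\subseteq \textrm{span}(e_1,\dots,e_n)$, where $[e_0,\cdot\,]+\{e_0,\cdot\,\}$ acts as a single nilpotent Jordan block so $\mathcal J$ is a tail $\textrm{span}(e_m,\dots,e_n)$, or else $[[\mathcal L_s,\mathcal J]]\supseteq\mathcal L_s^2$, and then, being stable under the diagonal action of $\mathcal Q$, it is an ideal.
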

\begin{proof}
    This follows from the preceding construction and a direct verification that every diagonal derivation of the model filiform Lie algebra is a derivation of $\mathcal L_s$.
\end{proof}

We close the paper with another open question, suggested by the solvable extensions constructed above.

\begin{conjecture}
    There is no more than $\underline{n_s}+5$ dimensional solvable compatible Lie extensions of $\cal L_s$.
\end{conjecture}

\bigskip

\section*{Declarations}

\subsection*{Funding}
{The authors are partially supported by Uni\'on Europea,  Fondo Europeo de Desarrollo Regional and by Junta de Extremadura, la Autoridad de Gestión y el Ministerio de Hacienda, through project GR24068, and also  by Agencia Estatal de Investigaci\'on (Spain), grant PID2024-155502NB-I00 (European FEDER support included, EU)}

\subsection*{Data availability statement}
No datasets were generated or analyzed during the preparation of this article; therefore, data sharing is not applicable.

\subsection*{Conflict of interest}
The authors declare that they have no conflicts of interest. No issues related to ethical approval, conflicts of interest, or ethical standards arise in connection with this work.

\bigskip 

\bibliographystyle{amsplain}

\end{document}